\documentclass[reqno]{amsart}
\usepackage{amsmath}
\usepackage{amscd,amsthm,amsfonts,amsopn,amssymb}
\usepackage{hyperref}
\usepackage{epsfig}
\usepackage{graphicx,verbatim}
\numberwithin{equation}{section}

\newtheorem{theorem}{Theorem}[section]
\newtheorem{proposition}{Proposition}[section]

\newtheorem{definition}{Definition}[section]
\newtheorem{lemma}{Lemma}[section]

\theoremstyle{remark}

\DeclareMathOperator{\supp}{supp\,}
\DeclareMathOperator{\dist}{dist\,}

\long\def\symbolfootnote[#1]#2{\begingroup
\def\thefootnote{\fnsymbol{footnote}}\footnote[#1]{#2}\endgroup} 

\parskip=0pt

\allowdisplaybreaks

\begin{document}

\title[Defocusing energy-supercritical NLW in dimension 5]{The Defocusing Energy-supercritical Cubic Nonlinear Wave Equation in Dimension Five}

\author{Aynur Bulut}

\address{School of Mathematics, Institute for Advanced Study, Princeton, NJ 08540}

\email{abulut@math.ias.edu}
\thanks{2010 Mathematics Subject Classification: 35L71, 35B44, 35P25}

\begin{abstract}
We consider the energy-supercritical nonlinear wave equation $u_{tt}-\Delta u+|u|^2u=0$ with defocusing cubic nonlinearity in  dimension $d=5$ with no radial assumption on the initial data.  We prove that a uniform-in-time {\it a priori} bound on the critical norm implies that solutions exist globally in time and scatter at infinity in both time directions.  Together with our earlier works in dimensions $d\geq 6$ with general data \cite{BulutCubic} and dimension $d=5$ with radial data \cite{BulutRadial}, the present work completes the study of global well-posedness and scattering in the energy-supercritical regime for the cubic nonlinearity under the assumption of uniform-in-time control over the critical norm.
\end{abstract}

\maketitle

\tableofcontents

\setlength{\parskip}{0.1in}

\section{Introduction}
We consider the initial value problem for the nonlinear wave equation with defocusing cubic nonlinearity in the energy supercritical regime, that is, dimensions $d\geq 5$,
\begin{align*}
\textrm{(IVP)}\quad\left\lbrace\begin{array}{rl}\partial_{tt}u-\Delta u+|u|^2u&=0,\\
(u,\partial_t u)|_{t=0}&=(u_0,u_1)\in \dot{H}_x^{s_c}\times\dot{H}_x^{s_c-1}
\end{array}\right.
\end{align*}
where $u:I\times\mathbb{R}^d\rightarrow\mathbb{R}$ with $0\in I\subset \mathbb{R}$ a time interval.

The scaling symmetry
\begin{align*}
u(t,x)\mapsto u_{\lambda}(t,x):=\lambda u(\lambda t,\lambda x).
\end{align*}
maps the set of solutions of IVP to itself and, moreover, the $\dot{H}_x^{s_c}\times\dot{H}_x^{s_c-1}$ norm is preserved by the scaling, where we identify the {\it critical regularity} as $s_c=\frac{d-2}{2}$.

We recall that solutions to (IVP) conserve the {\it energy}
\begin{align*}
E(u(t),u_t(t))=\int_{\mathbb{R}^d}\frac{1}{2}|u_t(t,x)|^2+\frac{1}{2}|\nabla u(t,x)|^2+\frac{1}{4}|u(t,x)|^4dx=E(u(0)),
\end{align*}
which is both finite and left invariant by the scaling when $s_c=1$.  In this case, we refer to (IVP) as the {\it energy-critical} nonlinear wave equation.  The range $s_c>1$, that is $d\geq 5$, is therefore known as the {\it energy-supercritical regime} for NLW.

In a recent series of works \cite{BulutCubic,BulutRadial} treating dimensions $d\geq 6$ with general (possibly non-radial) initial data, and dimension $d=5$ with radial initial data, we proved that any solution to (IVP) which satisfies an {\it a priori} uniform-in-time bound over the critical norm must exist globally in time and scatter; see also \cite{BulutThesis} and \cite{BulutContemporary}.  The goal of the current paper is to treat the remaining case of dimension $d=5$ with non-radial initial data,  completing the analysis of global well-posedness and scattering under the assumed a priori bound for (IVP) in the energy-supercritical regime.

More precisely, we consider {\it strong solutions} to
\begin{align*}
\textrm{(NLW)}\quad\left\lbrace\begin{array}{rl}\partial_{tt}u-\Delta u+|u|^2u&=0,\\
(u,\partial_t u)|_{t=0}&=(u_0,u_1)\in \dot{H}_x^{3/2}\times\dot{H}_x^{1/2},\end{array}\right.
\end{align*}
that is, functions $u:I\times\mathbb{R}^5\rightarrow\mathbb{R}$ such that for every $K\subset I$ compact, $(u,u_t)\in C_t(K;\dot{H}_x^{3/2}\times\dot{H}_x^{1/2})$ and $u\in L_{t,x}^{6}(K\times\mathbb{R}^5)$ which satisfy the {\it Duhamel formula},
\begin{align}
u(t)=S(t)(u_0,u_1)+\int_0^t \frac{\sin((t-t')|\nabla|)}{|\nabla|}|u(t')|^2u(t')dt',\label{eq_duhamel}
\end{align}
where $S(t)(u_0,u_1)=\cos(t|\nabla|)u_0+\frac{\sin(t|\nabla|)}{|\nabla|}u_1$ is the solution to the linear wave equation with initial data $(u_0,u_1)$.  

We refer to $I$ as the {\it interval of existence} of $u$, and we say that $I$ is the {\it maximal interval of existence} if $u$ cannot be extended to any larger time interval.  We say that $u$ is a {\it global solution} if $I=\mathbb{R}$, and that $u$ is a {\it blow-up solution} if $\lVert u\rVert_{L_{t,x}^{6}(I\times\mathbb{R}^d)}=\infty$.

Our main result in this paper is the following theorem:
\begin{theorem}
\label{thm1}
Suppose that $u:I\times\mathbb{R}^5\rightarrow\mathbb{R}$ is a solution to (NLW) with maximal interval of existence $I\subset\mathbb{R}$ satisfying the a priori bound
\begin{align*}
(u,u_t)\in L_t^\infty(I;\dot{H}_x^{3/2}\times\dot{H}_x^{1/2}).
\end{align*}
Then $I=\mathbb{R}$ and
\begin{align*}
\lVert u\rVert_{L_{t,x}^6(\mathbb{R}\times\mathbb{R}^5)}\leq C
\end{align*}
for some constant $C=C(\lVert (u,u_t)\rVert_{L_t^\infty(\dot{H}_x^{3/2}\times\dot{H}_x^{1/2})})$.  Furthermore, $u$ scatters both forward and backward in time, i.e. there exist $(u_0^{\pm},u_1^{\pm})\in\dot{H}_x^{s_c}\times\dot{H}_x^{s_c-1}$ such that
\begin{align*}
\left\lVert (u(t)-S(t)(u_0^\pm,u_1^\pm),u_t(t)-\partial_t S(t)(u_0^\pm,u_1^\pm)) \right\rVert_{\dot{H}_x^{s_c}\times\dot{H}_x^{s_c-1}}\rightarrow 0
\end{align*}
as $t\rightarrow\pm\infty$.
\end{theorem}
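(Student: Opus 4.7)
The plan is to follow the concentration-compactness/rigidity roadmap of Kenig--Merle, adapted to the energy-supercritical regime in the style of Killip--Visan. Arguing by contradiction, I assume the theorem fails: then there exists a threshold $L_{\max}$ such that for every $L < L_{\max}$ any solution with $\|(u,u_t)\|_{L_t^\infty \dot H_x^{3/2}\times \dot H_x^{1/2}}\leq L$ satisfies global spacetime bounds, while for every $L > L_{\max}$ there is a blow-up solution. Standard ingredients (Strichartz estimates on the critical spaces, a small-data global theory, and a long-time perturbation/stability lemma) are the first things I would set up, together with a Bahouri--Gérard-style linear profile decomposition adapted to the wave group in $\dot H_x^{3/2}\times\dot H_x^{1/2}$ in five dimensions, accounting for both scaling and spatial translation symmetries (time-translation parameters can be absorbed into the profile by Strichartz, as usual).

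Next I would run a Palais--Smale-type argument on a sequence of solutions with critical norm tending to $L_{\max}$ and spacetime norms diverging: applying the profile decomposition to the initial data and exploiting the nonlinear profile decomposition together with the stability lemma forces a single surviving profile, which yields a minimal blow-up solution $u_*$ whose orbit $\{(u_*(t),\partial_t u_*(t))\}$ is precompact in $\dot H_x^{3/2}\times\dot H_x^{1/2}$ modulo the symmetries $(u,u_t)\mapsto (\lambda u(\lambda\,\cdot+x),\lambda^2 u_t(\lambda\,\cdot+x))$. A further reduction, using a Furstenberg-type decomposition of the scale function $N(t)$ along with the no-waste-of-Duhamel property, narrows the minimal counterexamples to three enemies: (a) a self-similar blow-up, in which $N(t)\sim (T_+ - t)^{-1}$ on a finite maximal interval; (b) a frequency cascade, in which $N(t)\to\infty$ on a global interval; and (c) a soliton-like solution, in which $N(t)\equiv 1$ and the spatial center $x(t)$ is constrained.

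To exclude these enemies I would rely on conserved or almost-conserved quantities derived from compactness. Scenarios (a) and (b) are ruled out by additional regularity: I would prove that the minimal solution actually lies in $L_t^\infty \dot H_x^s$ for some $s < 3/2$ (indeed $s=1$, i.e., in the energy space) by a double Duhamel trick combined with frequency-localized Strichartz bounds; conservation of the energy at the finer regularity then contradicts the scale behaviour in (a) and (b). Scenario (c), the soliton-like case, is then addressed by a Morawetz-type inequality localized to the spatial scale $x(t)$; the key is to control the motion of $x(t)$ so that the dissipation term in the Morawetz identity on a growing ball $|x-x(0)|\leq R+|t|$ dominates the boundary flux, yielding a strict spacetime bound that contradicts the non-scattering assumption.

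The main obstacle I anticipate is the absence of radial symmetry in dimension $d=5$. Two sources of difficulty arise: first, the profile decomposition must accommodate translation parameters $x_n$, so the Palais--Smale step needs a delicate argument (via the maximum of the nonlinear profiles) to concentrate the mass into one profile; second, and more seriously, in the soliton-like case one must control the center-of-mass trajectory $x(t)$ and preclude it from escaping too rapidly. I expect the bulk of the technical work to be in establishing a frequency-localized Morawetz/interaction-Morawetz estimate tailored to five dimensions, together with bounds of the form $|x(t)|=o(t)$ obtained via the finite speed of propagation and compactness of the orbit, since the natural Morawetz weight $x/|x|$ interacts poorly with a moving center unless such control is available. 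Once these estimates are in place, the rigidity step closes by standard arguments.
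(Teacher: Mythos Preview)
Your broad outline follows the paper's Kenig--Merle roadmap, but there are two concrete gaps that would prevent the argument from closing in dimension $d=5$.

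First, your organization of the enemies is off, and more importantly your proposed mechanism for additional regularity fails here. The finite-time blow-up scenario does not require any regularity gain; it is dispatched directly from finite speed of propagation and energy conservation. It is the two \emph{global} scenarios (soliton and cascade) that both require showing $(u,u_t)\in L_t^\infty(\dot H_x^1\times L_x^2)$ before one can invoke Morawetz or energy conservation. You propose to obtain this via the double Duhamel trick, but the paper explicitly observes that in $d=5$ the resulting time integrals diverge: the double Duhamel used in higher dimensions relies on dispersive decay fast enough to make $\int_0^\infty\int_{-\infty}^0\langle\cdots\rangle\,d\tau\,dt$ absolutely convergent, and that convergence only holds for $d\geq 6$. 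The fix is not ``frequency-localized Strichartz'' but a \emph{spatial} localization: one covers $\mathbb{R}^5$ by Whitney balls and shows decay of the localized $\dot H_x^s$ norm on each ball, using a weak-diffraction estimate to handle the cross term where both Duhamel integrals are restricted to the region near $x(t)$.

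Second, and this is the crux, the spatial localization just described needs a quantitative bound on the speed of $x(t)$: not merely $|x(t)|=o(t)$, but genuine \emph{subluminality} $|x(t)-x(\tau)|\leq (1-\delta)|t-\tau|$ for $|t-\tau|$ large. This is what ensures the support of the localized nonlinearity stays uniformly away from the light cone, which is what makes the weak-diffraction lemma applicable. In $d=3$ (Killip--Visan) subluminality comes from the standard energy-flux inequality, whose right side there grows sublinearly in $|t|$. In $d=5$ the energy-flux bound gives only $O(|t|)$, which is useless for this purpose. The paper's new ingredient is a \emph{frequency-localized} energy-flux bound (together with a matching frequency-localized potential-energy concentration estimate), which yields $O(N^{-1})$ on the right side and is strong enough to force subluminality. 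Your proposal locates the hard work in a ``frequency-localized Morawetz/interaction-Morawetz'' estimate, but in fact once finite energy is in hand the Morawetz step is routine; the real work, and the genuinely new idea in $d=5$, is the frequency-localized energy-flux input to subluminality feeding into the spatially localized double Duhamel.
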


In the energy-critical case $s_c=1$ with the defocusing nonlinearity $|u|^{4/(d-2)}u$, global well-posedness and scattering for the nonlinear wave equation was established in a number of works: see, for instance, \cite{BahouriGerard,Grillakis1,Grillakis2,Kapitanskii,Nakanishi,Pecher,Rauch,ShatahStruwe1,ShatahStruwe,Struwe,Tao1}.  In particular, Struwe \cite{Struwe} established global well-posedness for radial inital data in dimension $d=3$, and Grillakis \cite{Grillakis1} removed this radial assumption.  Grillakis then established global well-posedness and persistence of regularity in dimensions $3\leq d\leq 5$ \cite{Grillakis2}, while this result was obtained for all dimensions $d\geq 3$ by Shatah and Struwe \cite{ShatahStruwe1,ShatahStruwe2,ShatahStruwe} and Kapitanskii \cite{Kapitanskii}.

In all of these works in the energy-critical case, the key properties used are an immediate uniform-in-time control of the critical norm $\dot{H}_x^{1}\times L_x^2$ by virtue of the conservation of energy, along with the uniform space-time control given by the Morawetz estimate,
\begin{align}
\int_I\int_{\mathbb{R}^d} \frac{|u(t,x)|^4}{|x|}dxdt\lesssim E(u(0)).\label{eq_morawetz}
\end{align}

Turning to the energy-supercritical regime $s_c>1$, the global well-posedness and scattering of solutions for large initial data remains an important open problem in this setting.  In particular, the lack of any known conserved quantity at the critical regularity renders the problem significantly more difficult than the energy-critical case.  Nevertheless, a number of recent works have treated the energy-supercritical nonlinear Schr\"odinger and nonlinear wave equations under an assumption of a uniform-in-time control of the critical norm, that is, 
\begin{align}
(u,u_t)\in L_t^\infty(I;\dot{H}_x^{s_c}\times\dot{H}_x^{s_c-1}),\label{apriori}
\end{align}
where $I\subset \mathbb{R}$ is the maximal interval of existence, in the spirit of the recent work of Escauriaza, Seregin and Sverak for Navier-Stokes \cite{ESS}.  

The first such result was obtained by Kenig and Merle in \cite{KMESupercriticalNLW} using the concentration compactness approach introduced in their study of the focusing energy-critical NLS and NLW \cite{KMECriticalNLS,KMECriticalNLW}.  In \cite{KMESupercriticalNLW}, global well posedness and scattering was obtained for solutions satisfying ($\ref{apriori}$) with radial initial data in dimension $d=3$ with the nonlinearity $|u|^pu$  with $p>4$, where the critical regularity becomes $s_c=\frac{d}{2}-\frac{2}{p}$; see also \cite{KModd}.  Killip and Visan removed the radial assumption in dimension $d=3$ \cite{KVNLW3} and established the result for radial initial data in dimensions $d\geq 3$ for a particular range of $p$ \cite{KVNLWradial}, which in dimension $d=5$ becomes $\frac{4}{3}<p<2$, disjoint from the cubic case treated in \cite{BulutRadial}.

In \cite{BulutCubic}, we initiated the study of global well-posedness and scattering under the assumed a priori bound in the case of the cubic nonlinearity in dimensions $d\geq 6$ with no radial assumption on the initial data, and in \cite{BulutRadial} we extended this result to cover dimension $d=5$ with radial initial data.  The role of the present work is therefore to complete the study of the defocusing cubic energy-supercritical NLW under the assumed uniform a priori control of the critical norm ($\ref{apriori}$).

\subsection{Outline of the proof of Theorem $\ref{thm1}$}

Our proof of Theorem $\ref{thm1}$ makes use of the concentration-compactness approach of Kenig and Merle, and proceeds as follows: 

Assuming that the theorem fails, the first step in the argument is to extract a minimal counterexample to the failure of global well-posedness and scattering; this counterexample will be referred to as a {\it minimal blow-up solution}.  More precisely, we recall the following result:
\begin{theorem}
[Construction of a minimal blow-up solution, \cite{KMECriticalNLW,KMESupercriticalNLW}]
\label{thm_minred}
Suppose that Theorem $\ref{thm1}$ failed.  Then there exists a solution $u:I\times\mathbb{R}^5\rightarrow\mathbb{R}$ to (NLW) with maximal interval of existence $I$,
\begin{align*}
(u,u_t)\in L_t^\infty(I;\dot{H}_x^{3/2}\times\dot{H}_x^{1/2}),\quad\textrm{and}\quad \lVert u\rVert_{L_{t,x}^{6}(I\times\mathbb{R}^5)}=\infty
\end{align*}
such that $u$ is a minimal blow-up solution in the following sense: for any solution $v$ with maximal interval of existence $J$ such that $\lVert v\rVert_{L_{t,x}^{6}(J\times\mathbb{R}^5)}=\infty$, we have
\begin{align*}
\sup_{t\in I} \lVert (u(t),u_t(t))\rVert_{\dot{H}_x^{3/2}\times\dot{H}_x^{1/2}}&\leq \sup_{t\in J} \lVert (v(t),v_t(t))\rVert_{\dot{H}_x^{3/2}\times\dot{H}_x^{1/2}}.
\end{align*}
Moreover, there exist $N:I\rightarrow \mathbb{R}^+$ and $x:I\rightarrow\mathbb{R}^5$ such that the set
\begin{align}
K&=\{(\frac{1}{N(t)}u(t,x(t)+\frac{x}{N(t)}),\,\frac{1}{N(t)^{2}}u_t(t,x(t)+\frac{x}{N(t)})):t\in I\},\label{cptness}
\end{align}
has compact closure in $\dot{H}^{s_c}\times\dot{H}^{s_c-1}(\mathbb{R}^5)$.
\end{theorem}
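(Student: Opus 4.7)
The plan is to follow the concentration-compactness reduction of Kenig and Merle. First I would introduce the threshold
\begin{align*}
E_c = \sup\{E \geq 0 : L(E) < \infty\},
\end{align*}
where $L(E)$ denotes the supremum of $\lVert v\rVert_{L_{t,x}^{6}(J\times\mathbb{R}^5)}$ taken over all maximal-interval solutions $v$ on $J$ satisfying $\sup_{t\in J}\lVert(v,v_t)\rVert_{\dot{H}_x^{3/2}\times\dot{H}_x^{1/2}}\leq E$. The small-data theory guarantees $E_c>0$, while the assumed failure of Theorem~\ref{thm1} forces $E_c<\infty$. I would then select a sequence of maximal-interval solutions $u_n$ with $\sup_t\lVert(u_n,\partial_t u_n)\rVert_{\dot{H}_x^{3/2}\times\dot{H}_x^{1/2}}\to E_c$ and $\lVert u_n\rVert_{L_{t,x}^{6}}\to\infty$, translating in time so that the $L^6_{t,x}$ norm of each $u_n$ diverges on both sides of $t=0$.

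Next, I would apply a linear profile decomposition in $\dot{H}_x^{3/2}\times\dot{H}_x^{1/2}$ to the sequence $(u_n(0),\partial_t u_n(0))$: along a subsequence,
\begin{align*}
(u_n(0),\partial_t u_n(0)) = \sum_{j=1}^{J}\Gamma_n^j(\phi^j,\psi^j) + (w_n^J(0),\partial_t w_n^J(0)),
\end{align*}
where each $\Gamma_n^j$ encodes a scale $\lambda_n^j$, a translation $x_n^j$, and a time shift $t_n^j$, the parameters are asymptotically orthogonal across $j$, and the free evolution of the remainder has vanishing Strichartz norm as $n,J\to\infty$. Associated nonlinear profiles $U^j$ are then defined by solving (NLW) either from $t=0$ or asymptotically from $\pm\infty$, according to the limiting behavior of $t_n^j/\lambda_n^j$. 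The heart of the argument is the long-time perturbation lemma: if two or more nonlinear profiles were nontrivial, a Pythagorean identity for the critical norm at $t=0$ would place each strictly below $E_c$, so by definition of $E_c$ each would have finite $L^6_{t,x}$ norm, and stability would then bound $\lVert u_n\rVert_{L^6_{t,x}}$ uniformly in $n$, contradicting divergence. Hence exactly one profile $U^1$ survives, the remainder vanishes in $\dot{H}_x^{3/2}\times\dot{H}_x^{1/2}$, and $U^1$ itself must blow up in $L^6_{t,x}$ while saturating the critical norm at level $E_c$. This $U^1$ is the minimal blow-up solution $u$.

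For the compactness assertion~\eqref{cptness}, I would apply the same profile decomposition to the translated data $(u(t_n),u_t(t_n))$ for any sequence $t_n\in I$. Minimality of $u$ rules out more than one nontrivial profile and forces the remainder to converge strongly to zero in $\dot{H}_x^{3/2}\times\dot{H}_x^{1/2}$; the scale and translation of the surviving profile define the sought values $N(t_n)$ and $x(t_n)$, and a diagonal extraction yields functions $N:I\to\mathbb{R}^+$ and $x:I\to\mathbb{R}^5$ for which the orbit $K$ has compact closure in $\dot{H}^{s_c}\times\dot{H}^{s_c-1}(\mathbb{R}^5)$. The main obstacle I anticipate is the construction and verification of a linear profile decomposition at the non-energy regularity $s_c=3/2$ in the non-radial five-dimensional setting, together with the matching long-time perturbation theorem in the Strichartz spaces adapted to this regularity; these are the delicate technical ingredients of the Kenig--Merle framework as adapted to supercritical wave equations in \cite{KMESupercriticalNLW} and the author's earlier works \cite{BulutCubic,BulutRadial}.
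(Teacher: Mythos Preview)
Your proposal is correct and follows essentially the same concentration-compactness reduction that the paper attributes to \cite{KMECriticalNLW,KMESupercriticalNLW}; the paper does not reprove Theorem~\ref{thm_minred} but simply cites it, noting that the key ingredient is precisely the profile decomposition in $\dot H^{3/2}_x\times\dot H^{1/2}_x$ for $d=5$ which you flag as the main technical obstacle --- this is stated as Proposition~\ref{lab14} and was established by the author in \cite{BulutMax}. With that profile decomposition and the standard long-time perturbation lemma in hand, the argument you outline (threshold $E_c$, elimination of multiple profiles via the Pythagorean expansion and stability, and a second application of the decomposition along the orbit to extract $N(t),x(t)$) is exactly the route taken in the cited references.
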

The proof of Theorem $\ref{thm_minred}$ is based on concentration compactness ideas in the form of a profile decomposition result for the linear wave equation, established in dimension $d=3$ by Bahouri and Gerard in \cite{BahouriGerard} and treated in dimensions $d\geq 3$ by the author in \cite{BulutMax}.  This profile decomposition states that, up to subsequences and the symmetries of the equation, any bounded sequence of initial data in the critical space $\dot{H}_x^{3/2}\times \dot{H}_x^{1/2}$ can be decomposed as a linear combination of pairwise orthogonal profiles and an error term which is small in a Strichartz norm.  More precisely, we have
\begin{proposition} [Profile decomposition \cite{BahouriGerard,BulutMax}]
\label{lab14}Let $(u_{0,n},u_{1,n})_{n\in\mathbb{N}}$ be a bounded sequence in $\dot{H}_x^{3/2}(\mathbb{R}^5)\times \dot{H}_x^{1/2}(\mathbb{R}^5)$.

Then there exists a subsequence of $(u_{0,n},u_{1,n})$ (still denoted $(u_{0,n},u_{1,n})$), a sequence of profiles $(V_0^j,V_1^j)_{j\in \mathbb{N}}\subset \dot{H}_x^{3/2}\times\dot{H}_x^{1/2}$, and a sequence of triples $(\epsilon_n^j,x_n^j,t_n^j)\in \mathbb{R}^+\times\mathbb{R}^5\times\mathbb{R}$, which are orthogonal in the sense that for every $j\neq j'$,
\begin{align*}
\frac{\epsilon_n^j}{\epsilon_n^{j'}}+\frac{\epsilon_n^{j'}}{\epsilon_n^j}+\frac{|t_n^j-t_n^{j'}|}{\epsilon_n^j}+\frac{|x_n^j-x_n^{j'}|}{\epsilon_n^j}\mathop{\longrightarrow}_{n\rightarrow\infty} \infty,
\end{align*}
and for every $\ell\geq 1$, if 
\begin{align*}
V^j=S(t)(V_0^j,V_1^j)\quad  and \quad V_n^j(t,x)=\frac{1}{(\epsilon_n^j)}V^j\left(\frac{t-t_n^j}{\epsilon_n^j},\frac{x-x_n^j}{\epsilon_n^j}\right),
\end{align*}
then
\begin{align*}
(u_{0,n}(x),u_{1,n}(x))&=\sum_{j=1}^\ell (V_n^j(0,x),\partial_t V_n^j(0,x))+(w_{0,n}^\ell(x),w_{1,n}^\ell(x))
\end{align*}
with
\begin{align*}
\limsup_{n\rightarrow\infty}\lVert S(t)(w_{0,n}^\ell,w_{1,n}^\ell)\rVert_{L_t^qL_x^r}\mathop{\longrightarrow}_{\ell\rightarrow\infty}0
\end{align*}
for every $(q,r)$ an $\dot{H}_x^{3/2}$-wave admissible pair with $q,r\in (2,\infty)$. For all $\ell\geq 1$, we also have, 
\begin{align*}
&\lVert u_{0,n}\rVert_{\dot{H}_x^{3/2}}^2+\lVert u_{1,n}\rVert_{\dot{H}_x^{1/2}}^2\\
&\hspace{0.2in}=\sum_{j=1}^\ell \left[\lVert V^j_0\rVert_{\dot{H}_x^{3/2}}^2+\lVert V^j_1\rVert_{\dot{H}_x^{1/2}}^2\right]+\lVert w_{0,n}^\ell\rVert_{\dot{H}_x^{3/2}}^2+\lVert w_{1,n}^\ell\rVert_{\dot{H}_x^{1/2}}^2+o(1),\quad n\rightarrow\infty. 
\end{align*}
\end{proposition}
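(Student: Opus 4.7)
The plan is to adapt the Bahouri--Gerard profile decomposition to the critical regularity $\dot H_x^{3/2} \times \dot H_x^{1/2}$ in dimension five by iteratively extracting concentration bubbles. The proof breaks into three stages: an inverse Strichartz (concentration) lemma producing one profile per step, an inductive extraction controlled by an asymptotic Pythagorean identity, and the verification of the orthogonality of parameters together with the Strichartz smallness of the remainder.

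First, I would prove an inverse Strichartz lemma: if $(f_n, g_n)$ is bounded in $\dot H_x^{3/2} \times \dot H_x^{1/2}$ and
\begin{align*}
\limsup_{n\to\infty} \lVert S(t)(f_n,g_n)\rVert_{L_t^q L_x^r} \geq \delta > 0
\end{align*}
for some fixed admissible pair, then along a subsequence there exist $(\epsilon_n,x_n,t_n) \in \mathbb{R}^+ \times \mathbb{R}^5 \times \mathbb{R}$ and a nontrivial $(V_0,V_1) \in \dot H_x^{3/2} \times \dot H_x^{1/2}$ such that the renormalization of $S(-t_n)(f_n,g_n)$ by $(\epsilon_n,x_n)$ converges weakly to $(V_0,V_1)$, and one has the single-profile decoupling
\begin{align*}
\lVert(f_n, g_n)\rVert^2 = \lVert(V_0, V_1)\rVert^2 + \lVert (f_n, g_n) - (\tilde V_{0,n}, \tilde V_{1,n})\rVert^2 + o(1),
\end{align*}
where $(\tilde V_{0,n}, \tilde V_{1,n})$ are the Cauchy data at $t = 0$ of the rescaled and translated linear evolution of $(V_0,V_1)$. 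The core ingredient is a refined Strichartz estimate localizing the $L_t^q L_x^r$ bound to a single dyadic frequency shell via Littlewood--Paley, combined with the frequency-localized dispersive estimate for $e^{\pm it|\nabla|}$ in $\mathbb{R}^5$ and an $L_{t,x}^\infty$ lower bound to pin down a space-time concentration point.

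Second, I would iterate. Setting $(w^0_{0,n}, w^0_{1,n}) := (u_{0,n}, u_{1,n})$ and, at stage $\ell$, applying the concentration lemma to $(w^\ell_{0,n}, w^\ell_{1,n})$ (provided its linear evolution still has nontrivial Strichartz norm) to extract $(V_0^{\ell+1}, V_1^{\ell+1})$ with parameters $(\epsilon_n^{\ell+1}, x_n^{\ell+1}, t_n^{\ell+1})$, I define
\begin{align*}
(w_{0,n}^{\ell+1}, w_{1,n}^{\ell+1}) := (u_{0,n}, u_{1,n}) - \sum_{j=1}^{\ell+1} (V_n^j(0,\cdot),\, \partial_t V_n^j(0,\cdot)).
\end{align*}
Summing the single-profile decouplings telescopes to the Pythagorean identity in the statement; in particular $\sum_j \lVert(V_0^j, V_1^j)\rVert^2$ is uniformly bounded, forcing $\lVert(V_0^j, V_1^j)\rVert \to 0$ as $j \to \infty$. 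The contrapositive of the concentration lemma then yields
\begin{align*}
\limsup_{n \to \infty} \lVert S(t)(w^\ell_{0,n}, w^\ell_{1,n})\rVert_{L_t^q L_x^r} \mathop{\longrightarrow}_{\ell \to \infty} 0
\end{align*}
for the chosen admissible pair. Orthogonality of $(\epsilon_n^j, x_n^j, t_n^j)$ follows by the standard contradiction argument: were orthogonality to fail for some $j<j'$, a change of frame would send $V_n^{j'}$ into the rescaling frame of the $j$-th profile, contradicting the weak-to-zero limit of the residual at the $j$-th scale.

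The hard part will be the inverse Strichartz estimate at the critical regularity: unlike the mass-critical extractions for NLS, here one must combine the frequency-localized dispersive bound in dimension five with a Besov-space improvement, carefully balanced against the $\dot H_x^{3/2}$ scaling, in a way that isolates a single concentration bubble. A secondary technical point is obtaining remainder smallness uniformly across every admissible pair $q, r \in (2,\infty)$; I would first establish smallness in one convenient pair and then promote it to all others by Strichartz inequalities together with the uniform bound on $(w^\ell_{0,n}, w^\ell_{1,n})$ in $\dot H_x^{3/2} \times \dot H_x^{1/2}$.
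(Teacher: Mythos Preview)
The paper does not prove this proposition; it is quoted from the literature (Bahouri--G\'erard for $d=3$ and the author's earlier work \cite{BulutMax} for $d\geq 3$), so there is no ``paper's own proof'' to compare against. Your outline is the standard Bahouri--G\'erard strategy carried out in those references: an inverse/refined Strichartz (concentration) lemma, iterative extraction with Pythagorean decoupling of the $\dot H^{3/2}\times\dot H^{1/2}$ norm, and the usual weak-limit contradiction for orthogonality of parameters. This is correct in broad strokes and matches what the cited papers do.

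One remark on a point you flagged as ``hard'': in the Bahouri--G\'erard framework the single-profile extraction is not usually phrased as an inverse Strichartz estimate with a Besov refinement (that is closer to the Kenig--Merle/Keraani style used for NLS). Bahouri--G\'erard instead extract scales first (via a Besov-type characterization of lack of compactness controlling the defect of compactness of the Sobolev embedding) and then cores via weak limits, which is slightly different bookkeeping but leads to the same conclusion. Either route works here; just be aware that the references you would cite organize the argument differently from your sketch.
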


The next step in the argument consists of showing that such a minimal counterexample cannot occur.  As we will see below, this failure of existence arises as a consequence of the compactness property ($\ref{cptness}$).  Before proceeding further, we now recall an equivalent formulation of ($\ref{cptness}$) from \cite{KVNLW3,KVNLWradial} which will be an important tool in our analysis of blow-up solutions.
\begin{definition}[almost periodic modulo symmetries] A solution $u$ to (NLW) with time interval $I$ is said to be almost periodic modulo symmetries if $(u,u_t)\in L_t^\infty(I;\dot{H}_x^{s_c}\times \dot{H}_x^{s_c-1})$ and there exist functions $N:I\rightarrow\mathbb{R}^+$, $x:I\rightarrow\mathbb{R}^5$ and $C:\mathbb{R}^+\rightarrow\mathbb{R}^+$ such that for all $t\in I$ and $\eta>0$,
\begin{align*}
\int_{|x-x(t)|\geq C(\eta)/N(t)} ||\nabla|^{s_c}u(t,x)|^2+||\nabla|^{s_c-1}u_t(t,x)|^2dx\leq \eta
\end{align*}
and
\begin{align*}
\int_{|\xi|\geq C(\eta)N(t)} ||\xi|^{3/2}\hat{u}(t,\xi)|^2+||\xi|^{1/2}\hat{u}_t(t,\xi)|^2d\xi\leq \eta.
\end{align*}
\end{definition}

Almost periodic solutions satisfy a reduced form of the Duhamel formula which states that the linear components of ($\ref{eq_duhamel}$) vanish as the time $T$ approaches the endpoints of the maximal interval of existence $I$.
\begin{proposition}[Duhamel formula for almost periodic solutions, \cite{TaoVisanZhang,KVNLW3}] \label{prop_duhamel} Let $u:I\times\mathbb{R}^5\rightarrow\mathbb{R}$ be a solution to (NLW) with maximal interval of existence $I$ which is almost periodic modulo symmetries.  Then for all $t\in I$,
\begin{align*}
&\bigg(\int_t^T\frac{\sin((t-t')|\nabla|)}{|\nabla|}F(u(t'))dt',\int_t^T \cos((t-t')|\nabla|)F(u(t'))dt'\bigg)\\
&\hspace{2in}\mathop{\rightharpoonup}_{T\rightarrow\sup I} (u(t),u_t(t)),
\end{align*}
and
\begin{align*}
&\bigg(-\int_T^t\frac{\sin((t-t')|\nabla|)}{|\nabla|}F(u(t'))dt',-\int_T^t \cos((t-t')|\nabla|)F(u(t'))dt'\bigg)\\
&\hspace{2in}\mathop{\rightharpoonup}_{T\rightarrow\inf I} (u(t),u_t(t)),
\end{align*}
weakly in $\dot{H}_x^{3/2}\times \dot{H}_x^{1/2}$.
\end{proposition}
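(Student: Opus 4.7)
My plan is to reduce the proposition to the weak vanishing of the free wave evolution $S(t-T)(u(T),u_t(T))$ as $T$ approaches the endpoints of $I$, then verify this vanishing using almost periodicity together with linear dispersion of Schwartz data.

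\textit{Reduction.} For fixed $t\in I$ and $T\in I$ with $T>t$, the standard Duhamel formula on $[t,T]$ relates $(u(t),u_t(t))$ to $(u(T),u_t(T))$ through the free propagator $S(t-T)$ plus a Duhamel integral over $[T,t]$. Rearranging (modulo sign conventions), the integral expression appearing in the proposition equals
\begin{align*}
(u(t),u_t(t)) - S(t-T)(u(T),u_t(T)),
\end{align*}
so the claimed weak convergence as $T\to\sup I$ is equivalent to
\begin{align*}
S(t-T)(u(T),u_t(T)) \rightharpoonup 0 \quad\text{in }\dot{H}^{3/2}\times\dot{H}^{1/2}.
\end{align*}
The case $T\to\inf I$ is symmetric.

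\textit{Dualization and dispersion.} Since $(u(T),u_t(T))$ is uniformly bounded in the critical space, it suffices to test against Schwartz pairs $\Phi=(\phi,\psi)$, and to transfer the propagator onto the test function using the (skew-)self-adjointness of $\cos(s|\nabla|)$ and $\sin(s|\nabla|)/|\nabla|$ in the appropriate pairing. The problem becomes: show that $\langle(u(T),u_t(T)),S(T-t)\Phi\rangle\to 0$. Almost periodicity gives $N:I\to\mathbb{R}^+$ and $x:I\to\mathbb{R}^5$ such that, after rescaling by $N(T)$ and translating by $x(T)$, the family $\{(u(T),u_t(T))\}$ lies in a precompact subset of the critical space. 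Transferring these symmetries to $\Phi$, the pairing becomes $\langle(f_T,g_T),S(N(T)(T-t))\tilde{\Phi}_T\rangle$, with $(f_T,g_T)$ in a compact orbit and $\tilde{\Phi}_T$ a rescaled-translated Schwartz function. An $\varepsilon$-net approximation on this compact orbit reduces the problem to pairings of a fixed element of the critical space against $S(s)\tilde{\Phi}$ for a fixed Schwartz $\tilde{\Phi}$, which vanish as $|s|\to\infty$ by the $d=5$ dispersive decay $\|S(s)\tilde{\Phi}\|_{L^\infty_x}\lesssim |s|^{-2}$ and Strichartz.

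\textit{Main obstacle.} The key subtlety is to ensure that the effective propagation time $N(T)\lvert T-t\rvert$ tends to infinity as $T\to\sup I$, so that dispersion actually produces decay. When $\sup I<\infty$, the local well-posedness theory and the blow-up criterion force $N(T)\to\infty$ at the blow-up time, and the claim is immediate. When $\sup I=\infty$, following the Tao--Visan--Zhang paradigm, one exploits the spatial and frequency localization afforded by almost periodicity: given $\eta>0$, split $(u(T),u_t(T))$ into a piece frequency- and spatially localized on the scale dictated by $N(T)$ and $x(T)$, plus an $\eta$-error in the critical norm; apply dispersion to the localized piece against $S(T-t)\Phi$, and then let $\eta\to 0$. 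Once this effective-time divergence is handled, the dispersion plus $\varepsilon$-net argument of the second paragraph closes the proof.
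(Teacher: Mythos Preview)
The paper does not give a proof of this proposition; it is quoted from \cite{TaoVisanZhang,KVNLW3} as a known ingredient. Your reduction to the weak vanishing of $S(t-T)(u(T),u_t(T))$ and the dualization onto a Schwartz test pair are exactly the opening moves of the standard argument in those references, so the overall strategy is correct.

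The imprecision is in your ``main obstacle'' paragraph. It is \emph{not} true in general that the effective time $N(T)\lvert T-t\rvert$ tends to infinity: when $\sup I=\infty$, one may have $N(T_n)\to 0$ along a subsequence with $N(T_n)\lvert T_n\rvert$ merely bounded (the constraint coming from $\lVert u\rVert_{L^6_{t,x}}=\infty$ is essentially $\int_I N(t)\,dt=\infty$, which allows $N(t)\sim t^{-1}$). The standard proof therefore does not funnel everything through dispersion. After passing to a subsequence one distinguishes two mechanisms: (i) if $N(T_n)\to\infty$ or $N(T_n)\to 0$, then $(u(T_n),u_t(T_n))$, and hence its image under the unitary $S(t-T_n)$ (which commutes with Littlewood--Paley projections), is frequency-localized away from any fixed dyadic window, so the pairing against a band-limited $\Phi$ vanishes without any appeal to dispersion; (ii) if $N(T_n)$ is bounded above and below, then necessarily $\sup I=\infty$ and $\lvert T_n-t\rvert\to\infty$, and one combines compactness of the orbit modulo translation by $x(T_n)$ with the genuine dispersive decay of $S(T_n-t)\Phi$. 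Your final paragraph alludes to frequency and spatial localization but folds them into the dispersion step; separating the frequency-escape mechanism (i) from the time-dispersion mechanism (ii) is what actually closes the argument.
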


We now recall a refinement of Theorem $\ref{thm_minred}$, which states that any failure of Theorem $\ref{thm1}$ not only implies the existence of a minimal blow-up solution, but also implies that there exists an almost periodic solution which belongs to one of three particular blow-up scenarios, for which the function $N(t)$ is specified further.  
\begin{theorem}[Identification of blow-up scenarios, \cite{KVNLW3}]
\label{thm_apred}
Suppose that Theorem $\ref{thm1}$ fails.  Then there exists a solution $u:I\times\mathbb{R}^5\rightarrow\mathbb{R}$ to (NLW) with maximal interval of existence $I$ such that $u$ is almost periodic modulo symmetries,
\begin{align*}
(u,u_t)\in L_t^\infty(\dot{H}_x^{3/2}\times\dot{H}_x^{1/2}),\quad\textrm{and}\quad \lVert u\rVert_{L_{t,x}^6(I\times\mathbb{R}^5)}=\infty,
\end{align*}
and $u$ satisfies one of the following:
\begin{itemize}
\item (finite time blowup solution) either $\sup I<\infty$ or $\inf I>-\infty$.
\item (soliton-like solution) $I=\mathbb{R}$ and $N(t)=1$ for all $t\in \mathbb{R}$.
\item (low-to-high frequency cascade solution) $I=\mathbb{R}$ and
\begin{align*}
\inf_{t\in\mathbb{R}} N(t)\geq 1,\quad\textrm{and}\quad \limsup_{t\rightarrow\infty} N(t)=\infty.
\end{align*}
\end{itemize}
\end{theorem}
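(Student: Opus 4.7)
The plan is to take the almost periodic minimal blow-up solution $u$ furnished by Theorem \ref{thm_minred} and manipulate it, via rescalings and extractions of subsequential limits, into a solution falling into one of the three listed categories. The argument rests on a trichotomy organized by the maximal interval of existence $I$, followed by a dichotomy on the behavior of the modulation parameter $N(t)$.

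First, if $\sup I<\infty$ or $\inf I>-\infty$, then $u$ itself satisfies the first scenario and we are done. Otherwise $I=\mathbb{R}$, and the task becomes to normalize the behavior of $N$. The key intermediate claim is that, after possibly replacing $u$ by a rescaled subsequential limit, we may assume $\inf_{t\in\mathbb{R}}N(t)\geq 1$. To establish this, suppose $\inf N(t)=0$, pick $t_n\in\mathbb{R}$ with $N(t_n)\to 0$, and consider the rescaled solutions
\begin{align*}
u_n(s,y)=\frac{1}{N(t_n)}u\Bigl(t_n+\frac{s}{N(t_n)},\,x(t_n)+\frac{y}{N(t_n)}\Bigr),
\end{align*}
whose critical norms agree with that of $u$. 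Compactness of the set $K$ in \eqref{cptness} forces $(u_n(0),\partial_s u_n(0))$ to converge strongly in $\dot{H}_x^{3/2}\times\dot{H}_x^{1/2}$ along a subsequence, to some $(\tilde u_0,\tilde u_1)$. By the perturbation/stability theory for (NLW), the associated solution $\tilde u$ is again a minimal almost periodic blow-up solution with modulation parameter $\tilde N(0)=1$. A careful comparison of lifespans in the original and rescaled coordinates, combined with $N(t_n)\to 0$, shows that $\tilde u$ must blow up in finite time on at least one side, returning us to the first scenario.

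Granted this reduction to $I=\mathbb{R}$ with $\inf_t N(t)\geq 1$, one dichotomizes on $\sup_t N(t)$: if it is finite, the compactness envelope accommodates replacing $N$ by the constant $1$ (absorbing the finite range of $N$ into the compactness constants $C(\eta)$), yielding the soliton-like scenario; if it is infinite, then $\limsup_{t\to+\infty}N(t)=\infty$ or $\limsup_{t\to-\infty}N(t)=\infty$, and time reversal places us in the low-to-high frequency cascade scenario. The principal technical obstacle is the rescale-and-extract step: verifying that the limit $\tilde u$ inherits almost periodicity modulo symmetries, remains a genuine blow-up solution with $\lVert \tilde u\rVert_{L_{t,x}^6}=\infty$, stays minimal in the sense of Theorem \ref{thm_minred}, and indeed has finite maximal interval. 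Each of these points relies on careful use of the Strichartz theory for the five-dimensional wave equation, the small-data local well-posedness theory for (NLW), and a Kenig--Merle-type stability theorem for identifying limits of rescaled solutions with genuine solutions of (NLW).
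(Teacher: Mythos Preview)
The paper does not supply a proof of Theorem~\ref{thm_apred}; it is quoted from \cite{KVNLW3} (building on \cite{KillipTaoVisan,KVESupercriticalNLS,TaoVisanZhang}) and used as a black box. So there is no proof in the paper to compare your sketch against, only the argument in the cited literature.

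Your outline follows the Kenig--Merle/Killip--Visan philosophy correctly, but the step ``if $I=\mathbb{R}$ and $\inf_t N(t)=0$, then the rescaled limit $\tilde u$ must blow up in finite time'' is not justified and, as stated, is not how the reduction actually proceeds. Since each rescaled solution $u_n$ is defined on all of $\mathbb{R}$ (rescaling $\mathbb{R}$ gives $\mathbb{R}$), nothing about ``comparison of lifespans'' forces the limit to have a finite maximal interval; one would need to show that the $L_{t,x}^6$ norm of $u$ over the expanding windows $[t_n,\,t_n+T/N(t_n)]$ diverges for some fixed $T$, which does not follow from $N(t_n)\to 0$ alone. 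In the actual argument one first proves a local-constancy property for $N$ (namely $N(t)\sim N(t_0)$ whenever $|t-t_0|\lesssim N(t_0)^{-1}$), and then, in the global case, chooses the rescaling times $t_n$ to be (approximate) minimizers of $N$ over windows whose rescaled length tends to infinity. This guarantees that the rescaled parameters satisfy $N_n(s)\geq 1$ on intervals exhausting $\mathbb{R}$, so the limit $\tilde u$ is global with $\inf_s \tilde N(s)\geq 1$; one then dichotomizes on $\sup \tilde N$ exactly as you describe. The finite-time blowup scenario is not produced from the $\inf N=0$ case; it arises only when the original maximal interval is already finite. Your final paragraph identifies the right list of technical ingredients (stability theory, inherited almost periodicity, preserved minimality), but the organizing case analysis on $N$ needs to be redone along these lines.
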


To complete the proof of Theorem $\ref{thm1}$, it suffices to show that each of the scenarios identified in Theorem $\ref{thm_apred}$ cannot occur.

We begin with the finite time blow-up solution, which is ruled out in Proposition $\ref{prop_fintime}$.  The argument is based on the use of the finite speed of propagation and almost periodicity to show that up to a translation in space, the solution at a given time $t$ has compact support which shrinks to zero as $t$ approaches the blow-up time.  The conservation of energy is then used to contradict the property that the solution is a blow-up solution.  This argument is essentially the same as in higher dimensions (see \cite{BulutCubic} and the references cited therein); for completeness, we include a brief account of the proof in Section $6$.

We now turn to the remaining two scenarios, the soliton-like and low-to-high frequency cascade solutions.  As in higher dimensions, the main tool that we use to rule out these scenarios is to show that due to their particular properties, the minimal blow-up solutions in these classes have finite energy.  We then exploit the control given by the conseration of energy and the Morawetz estimate ($\ref{eq_morawetz}$) to show that these solutions cannot exist.

Before proceeding with the current case of dimension five, let us first recall the main steps used to obtain the finiteness of energy for these two scenarios in dimensions six and higher:
\begin{itemize}
\item We first refine the bound $u\in L_t^\infty L_x^d$ (which is immediate from the Sobolev embedding and the a priori bound $u\in L_t^\infty \dot{H}_x^{s_c}$) to $L_t^\infty L_x^p$ for some $p<d$.  In particular, we use a bootstrap argument to bound the low frequencies of $u$ via almost periodicity and the analogue of Proposition $\ref{prop_duhamel}$, while the high frequencies are bounded by the a priori bound.
\item We next use this $L_t^\infty L_x^p$ bound to establish an implication of the form
\begin{align}
&\hspace{0.4in}(u,u_t)\in L_t^\infty(\dot{H}_x^{s}\times\dot{H}_x^{s-1})\quad\leadsto \quad (u,u_t)\in L_t^\infty(\dot{H}_x^{s-s_0}\times\dot{H}_x^{s-1-s_0}).\label{eq_impl}
\end{align}
for some $s_0>0$.  This is accomplished by using the double Duhamel technique introduced in \cite{CKSTT} and used in the context of the concentration compactness method in \cite{KVECriticalNLS,KVESupercriticalNLS}.  More precisely, we consider the homogeneous Sobolev norm as an inner product and using the Duhamel formula provided by Proposition $\ref{prop_duhamel}$ forward and backward in time on each side of the inner product, respectively.  We remark that the resulting integrals are convergent in dimensions $d\geq 6$; this is the source of the dimensional restriction in our work \cite{BulutCubic}.
\item Once we obtain the second step above, we iterate this argument starting with the a priori bound $(u,u_t)\in L_t^\infty(\dot{H}_x^{s_c}\times\dot{H}_x^{s_c-1})$ to obtain the desired decay $L_t^\infty(\dot{H}_x^{1-\epsilon}\times\dot{H}_x^{-\epsilon})$ for some $\epsilon>0$.  In particular, we obtain that the energy is finite.
\end{itemize}

In the present work, in dimension $d=5$, we overcome the restriction in higher dimensions by using a covering argument to introduce a spatial localization into the Duhamel integrals.  This is inspired by a similar approach used in \cite{KVNLW3} to study the energy-supercritical NLW in dimension $d=3$.  However, substantial differences between the linear propagator, Strichartz and dispersive estimates in dimensions $d=3$ and $d=5$ require us to argue differently than in \cite{KVNLW3}.

We now outline the main tools involved in the present work as follows:
\begin{itemize}
\item A refined version of the improved $L_t^\infty L_x^p$ integrability obtained in dimensions $d\geq 6$.  In particular, we establish the bound $u\in L_t^\infty L_x^p$ for some range of $p<5$; this shows that the a priori bound $(u,u_t)\in L_t^\infty(\dot{H}_x^{3/2}\times\dot{H}_x^{1/2})$ implies that the soliton-like and low-to-high frequency cascade solutions belong to $L_t^\infty L_x^3$, and also enables us to exploit additional bounds of the form $(u,u_t)\in L_t^\infty(\dot{H}_x^{s}\times\dot{H}_x^{s-1})$ to conclude the integrability $u\in L_t^\infty L_x^p$ for an expanded range of $p$ (see Proposition $\ref{prop2}$).
\item A quantitative estimate on the decay of the $L_t^\infty L_x^5$ norm away from the centering function $x(t)$ (see Proposition $\ref{prop_quant}$).  Such a result was first observed in \cite{KVNLW3} with the corresponding norm $L_t^\infty L_x^{3p/2}$ in dimension $d=3$.  We remark that in our setting, Proposition $\ref{prop_quant}$ takes advantage of the improvement in $L_t^\infty L_x^p$ integrability given by Proposition $\ref{prop2}$ to give better decay when additional $L_t^\infty(\dot{H}_x^s\times\dot{H}_x^{s-1})$ bounds are present.  This improvement in decay is an essential aspect of our argument.
\item A {\it subluminality} property which expresses that for a suitable class of solutions, the centering function $x(t)$ travels strictly slower than the propagation speed for (NLW).  An analogous result was first obtained in \cite{KVNLW3} in the dimension $d=3$ setting.  We note that the main estimate used to obtain the subluminality in \cite{KVNLW3} is the energy-flux inequality, while this estimate no longer provides the decay required in the current $d=5$ setting.  To overcome this difficulty, we establish a frequency localized form of the energy-flux inequality which is accompanied by a frequency localized form of the concentration of potential energy (see Lemma $\ref{lem2new}$ and Lemma $\ref{lem3}$).
\end{itemize}

With these tools in hand, we establish an iterative improvement in decay properties for the solution $u$.  In particular, to make the Duhamel integrals convergent, we use a covering argument as in \cite{KVNLW3} to localize the resulting integrals in space, combined with a variant of a weak diffraction lemma from \cite{KVNLW3} adapted to the $d=5$ setting.

It is important to note that the applications of Proposition $\ref{prop2}$ and Proposition $\ref{prop_quant}$ in the iterative improvement of the decay in analogy to ($\ref{eq_impl}$) use only the a priori bound $(u,u_t)\in L_t^\infty(\dot{H}_x^{3/2}\times\dot{H}_x^{1/2})$ to obtain the decay $(u,u_t)\in L_t^\infty(\dot{H}_x^s\times\dot{H}_x^{s-1})$ for $s>\frac{5}{4}$, which is still above the energy level.  We then use this improved decay to obtain the integrability $L_t^\infty L_x^p$ for a larger range of $p$ than the one that is obtained via the a priori bound alone.  This extra integrability in turn allows us to continue the iteration argument and we conclude that the energy is finite.

We therefore again have access to the Morawetz estimate ($\ref{eq_morawetz}$) and the conservation of energy.  We then rule out the soliton-like and low-to-high frequency cascade scenarios, making use of almost periodicity, the finite speed of propagation, the Morawetz estimate ($\ref{eq_morawetz}$) and the conservation of energy.  This is accomplished in Proposition $\ref{prop_sol}$ and Proposition $\ref{prop_casc}$, respectively.

\subsection*{Outline of the paper}

In Section $2$, we establish our notation and recall some essential preliminaries concerning the wave equation, as well as some analytical tools which we will use in our arguments.  In section $3$, we establish improved integrability properties of the soliton-like and low-to-high frequency cascade solutions.  Then, in Section $4$ we establish a frequency-localized bound of energy-flux type and a frequency-localized form of the concentration of potential energy, which are then used to prove the subluminality result.  Finally, in Section $5$ we prove that the global scenarios have finite energy, and in Section $6$ we preclude each of the scenarios.

\section{Preliminaries}
\parskip=8pt

\subsection{Notation}
We will write $L_t^qL_x^r(I\times\mathbb{R}^5)$ to indicate the Banach space of functions $u:I\times\mathbb{R}^5\rightarrow\mathbb{R}$ with the space-time norm
\begin{align*}
\lVert u\rVert_{L_t^qL_x^r}:=\left(\int_{I} \lVert u(t)\rVert_{L_x^r(\mathbb{R}^5)}^qdt\right)^{1/q}<\infty
\end{align*}
with the standard convention when $q$ or $r$ is equal to infinity.  If $q=r$, we shorten the notation and write $L_{t,x}^q$ in place of $L_t^qL_x^q$.

We write $X\lesssim Y$ to mean that there exists a constant $C>0$ such that $X\leq CY$, while $X\lesssim_u Y$ indicates that the constant $C=C_u$ may depend on $u$.  We use the symbol $\nabla$ for the derivative operator in only the space variables.  

Throughout the exposition, we define the Fourier transform on $\mathbb{R}^5$ by
\begin{align*}
\widehat{f}(\xi)=(2\pi)^{-5/2}\int_{\mathbb{R}^5} e^{-ix\cdot \xi}f(x)dx.
\end{align*}
We also denote the homogeneous Sobolev spaces by $\dot{H}_x^s(\mathbb{R}^5)$, $s\in\mathbb{R}$, equipped with the norm
\begin{align*}
\lVert f\rVert_{\dot{H}_x^s}=\lVert |\nabla|^sf\rVert_{L_x^2}
\end{align*}
where the fractional differentiation operator is given by
\begin{align*}
\widehat{|\nabla|^sf}(\xi)&=|\xi|^s\widehat{f}(\xi).
\end{align*}

We now recall the explicit formulation of the free propagator for the wave equation as
\begin{align*}
S(t)(f,g)=\cos(t|\nabla|)f+\frac{\sin(t|\nabla|)}{|\nabla|}g,
\end{align*}
which is given in Fourier space by
\begin{align*}
\widehat{S(t)}(f,g)=\cos(t|\xi|)\hat{f}(\xi)+\frac{\sin(t|\xi|)}{|\xi|}\hat{g}(\xi).
\end{align*}
In dimension five, we have the additional representations
\begin{align}
\cos(t|\nabla|)f&=\frac{3}{|S^4|}\partial_t \bigg[t^{-1}\partial_t\bigg(t^{-1}\int_{\partial B(x,t)}f(y)dS(y)\bigg)\bigg]\label{lop1}
\end{align}
and
\begin{align}
\frac{\sin(t|\nabla|)}{|\nabla|}g&=\frac{3}{t|S^4|}\partial_t \left[\frac{1}{t}\int_{|y-x|=|t|} g(y)dS(y)\right].\label{lop2}
\end{align}
where $B(x,r)=\{x\in\mathbb{R}^5:|x|<r\}$ and $|S^4|$ denotes the surface area of the unit sphere $\partial B(0,1)\subset \mathbb{R}^5$.

We also recall the following expression of finite speed of propagation for almost periodic solutions, which we will frequently use in our arguments:
\begin{lemma}
[Speed of propagation for $x(t)$, \cite{KVNLW3,BulutCubic}]
\label{lem_a2}
Suppose that $u:\mathbb{R}\times\mathbb{R}^5\rightarrow \mathbb{R}$ is an almost periodic solution to (NLW) with $N(t)\geq 1$ for all $t\in\mathbb{R}$.
Then there exists $C>0$ such that for all $t,\tau\in\mathbb{R}$,
\begin{align}
\label{fsprop_xt}|x(t)-x(\tau)|\leq |t-\tau|+C(N(t)^{-1}+N(\tau)^{-1}).
\end{align}
\end{lemma}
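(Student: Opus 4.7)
The argument is by contradiction, using finite speed of propagation to transport the spatial concentration guaranteed by almost periodicity. I first convert the concentration from the nonlocal critical norm into the local $L^5$ norm via the Sobolev embedding $\dot{H}^{3/2}_x(\mathbb{R}^5)\hookrightarrow L^5_x(\mathbb{R}^5)$. Combined with the compactness (\ref{cptness}), this gives that the rescaled trajectory $N(t)^{-1}u(t,x(t)+\cdot/N(t))$ has compact closure in $L^5_x$. Provided $u$ is nontrivial (which for the soliton-like and cascade scenarios follows from the blow-up property together with small-data well-posedness applied at any time where the critical norm was small), this closure avoids $0$, yielding a uniform lower bound $\lVert u(t)\rVert_{L^5_x}\geq \eta_0>0$. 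The physical-space tail in the almost-periodicity definition translates, via Sobolev, into: for every $\eta>0$ there exists $R(\eta)>0$ with
$$\lVert u(t)\rVert_{L^5(|y-x(t)|>R(\eta)/N(t))}\leq \eta \qquad\text{for all } t\in\mathbb{R}.$$

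Second, suppose the conclusion fails. Then for any $C>0$ there exist $t,\tau\in\mathbb{R}$ with $|x(t)-x(\tau)|>|t-\tau|+C(N(t)^{-1}+N(\tau)^{-1})$. I fix $\eta_1\in(0,\eta_0)$ small compared both to $\eta_0$ and to a small-data threshold for (NLW), and I take $C=2R(\eta_1)$. A direct triangle-inequality check then shows that for every $y\in B(x(t),R(\eta_1)/N(t))$, the backward light cone $\{z:|z-y|\leq |t-\tau|\}$ at time $\tau$ is disjoint from $B(x(\tau),R(\eta_1)/N(\tau))$.

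Third, I introduce a smooth cutoff $\chi$ with $\chi\equiv 0$ on $B(x(\tau),R(\eta_1)/N(\tau))$ and $\chi\equiv 1$ outside $B(x(\tau),2R(\eta_1)/N(\tau))$, set $(v_0,v_1):=(\chi u(\tau),\chi u_t(\tau))$, and let $v$ be the strong solution to (NLW) with these data. Using fractional Leibniz/Kato--Ponce commutator estimates, together with the $L^2$ tail bound $\lVert |\nabla|^{3/2}u(\tau)\rVert_{L^2(|y-x(\tau)|\geq R(\eta_1)/N(\tau))}+\lVert |\nabla|^{1/2}u_t(\tau)\rVert_{L^2(|y-x(\tau)|\geq R(\eta_1)/N(\tau))}\lesssim \sqrt{\eta_1}$ coming directly from almost periodicity (after enlarging $R(\eta_1)$ if necessary), one shows $\lVert(v_0,v_1)\rVert_{\dot{H}^{3/2}\times \dot{H}^{1/2}}\lesssim \sqrt{\eta_1}$. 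The small-data theory then yields a global $v$ with $\sup_t\lVert (v(t),v_t(t))\rVert_{\dot{H}^{3/2}\times\dot{H}^{1/2}}\lesssim\sqrt{\eta_1}$, and by Sobolev $\sup_t\lVert v(t)\rVert_{L^5_x}\lesssim\sqrt{\eta_1}$. Since $(v_0,v_1)$ agrees with $(u(\tau),u_t(\tau))$ on the complement of $B(x(\tau),2R(\eta_1)/N(\tau))$, finite speed of propagation together with the disjointness from the previous paragraph gives $u(t)\equiv v(t)$ on $B(x(t),R(\eta_1)/N(t))$. Consequently
$$\eta_0\leq \lVert u(t)\rVert_{L^5_x}\leq \lVert v(t)\rVert_{L^5_x}+\lVert u(t)\rVert_{L^5(|y-x(t)|>R(\eta_1)/N(t))}\lesssim \sqrt{\eta_1}+\eta_1,$$
which is a contradiction once $\eta_1$ is chosen small enough.

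\textbf{Main obstacle.} The delicate step is the truncation estimate $\lVert (v_0,v_1)\rVert_{\dot{H}^{3/2}\times \dot{H}^{1/2}}\lesssim\sqrt{\eta_1}$, since the critical space is nonlocal and the cutoff must be controlled against the commutator $[|\nabla|^{3/2},\chi]$. The bound $s_c=3/2<5/2=d/2$ ensures multiplication by a smooth bump is bounded on $\dot{H}^{s_c}$, and the fractional product rule combined with the $\dot{H}^{3/2}\hookrightarrow L^5$ control on $u$ handles the mixed terms; the physical-space tail of $|\nabla|^{s_c}u$ provides the smallness. The remaining ingredients (geometric disjointness of the cones, small-data well-posedness, finite speed of propagation, and the $L^5$ lower bound for nontrivial almost periodic blow-up solutions) are standard.
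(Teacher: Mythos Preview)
Your overall strategy---contradiction via finite speed of propagation, smooth truncation of the data at time $\tau$, small-data global theory for the truncated solution $v$, and comparison with $u$ on $B(x(t),R(\eta_1)/N(t))$---is precisely the standard argument the paper defers to \cite{KVNLW3,BulutCubic}, so your approach matches the referenced proof.

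There is, however, one genuine gap. Your justification of the uniform lower bound $\lVert u(t)\rVert_{L^5_x}\geq\eta_0$ reads ``$u$ nontrivial $\Rightarrow$ the $L^5$-closure of the rescaled $u$-orbit avoids $0$,'' but this inference is not valid as stated. The blow-up/small-data argument you invoke only yields a lower bound on the \emph{pair} $\lVert(u(t),u_t(t))\rVert_{\dot H^{3/2}\times\dot H^{1/2}}$; it does not preclude $u(t_0)=0$ with $u_t(t_0)\neq 0$ at some time, and compactness of the first component in $L^5$ alone does not keep $0$ out of that closure. Since your contradiction at time $t$ is phrased entirely in terms of $\lVert u(t)\rVert_{L^5}$, this is the point where the argument as written could fail.

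The fix is short: invoke the potential-energy concentration bound that the paper later records as \eqref{eq2a} (from \cite[Lemma~2.6]{KVNLWradial}; see also \cite[Lemma~8.3]{BulutCubic}), namely
\[
\int_{|x-x(t)|\leq C/N(t)}|u(t,x)|^{4}\,dx \;\gtrsim_u\; N(t)^{-1}.
\]
Applying H\"older on the ball of volume $\sim N(t)^{-5}$ gives $\lVert u(t)\rVert_{L^5(|x-x(t)|\leq C/N(t))}\gtrsim_u 1$, which supplies exactly the uniform $L^5$ lower bound you need and closes your contradiction. (Attempting instead to run the final comparison directly in $\dot H^{3/2}\times\dot H^{1/2}$ with a second cutoff at time $t$ is less clean, since the commutator $[|\nabla|^{3/2},\tilde\chi]u$ is only $O(1)$ rather than $o(1)$; your choice of the local $L^5$ norm is the right one, it just needs the concentration lemma to anchor it.) A minor arithmetic point: to ensure the backward cone from $B(x(t),R/N(t))$ misses the set where $\chi\neq 1$, namely $B(x(\tau),2R/N(\tau))$ rather than $B(x(\tau),R/N(\tau))$, take $C=3R(\eta_1)$ instead of $2R(\eta_1)$; the triangle-inequality check then goes through as you indicate.
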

For a proof of this lemma, we refer the reader to $(4.2)$ in \cite{KVNLW3} and, for a related argument in the case of the soliton-like solution \cite[Lemma $8.2$]{BulutCubic}.  Although the result in \cite{KVNLW3} is stated for dimension $d=3$, the proof applies equally to the current setting.

\subsection{Strichartz estimates for NLW}

For $s\geq 0$, we say that a pair of exponents $(q,r)$ is $\dot{H}_x^s$-{\it wave admissible} if $q,r\geq 2$, $r<\infty$ and it satisfies
\begin{align*}
\frac{1}{q}+\frac{2}{r}\leq 1,\quad \frac{1}{q}+\frac{5}{r}=\frac{5}{2}-s.
\end{align*}
We also define the following {\it Strichartz norms}.  For each $I\subset\mathbb{R}$ and $s\geq 0$, we set
\begin{align*}
\lVert u\rVert_{S_s(I)}&=\sup_{(q,r)\,\dot{H}_x^s-\textrm{wave admissible}} \lVert u\rVert_{L_t^qL_x^r(I\times\mathbb{R}^5)},\\
\lVert u\rVert_{N_s(I)}&=\inf_{(q,r)\,\dot{H}_x^s-\textrm{wave admissible}} \lVert u\rVert_{L_t^{q'}L_x^{r'}(I\times\mathbb{R}^5)}.
\end{align*}

Suppose $u:I\times\mathbb{R}^5\rightarrow\mathbb{R}$ with time interval $0\in I\subset\mathbb{R}$ is a solution to the nonlinear wave equation
\begin{align*}
\left\lbrace \begin{array}{rl}u_{tt}-\Delta u+F&=0\\
(u,u_t)|_{t=0}&=(u_0,u_1)\in \dot{H}_x^{\mu}\times \dot{H}_x^{\mu-1}(\mathbb{R}^5), \quad \mu\in\mathbb{R}.
\end{array}
\right.
\end{align*}
Then for all $s,\tilde{s}\in\mathbb{R}$ we have the {\it inhomogeneous Strichartz estimates} \cite{GinibreVelo,KeelTao},
\begin{align}
\nonumber &\lVert |\nabla|^s u\rVert_{S_{\mu-s}(I)}+\lVert |\nabla|^{s-1}u_t\rVert_{S_{\mu-s}(I)}\\
&\hspace{1.5in}\lesssim \lVert (u_0,u_1)\rVert_{\dot{H}_x^{\mu}\times\dot{H}_x^{\mu-1}}+\lVert |\nabla|^{\tilde{s}}F\rVert_{N_{1+\tilde{s}-\mu}(I)}.\label{str}
\end{align}

We also recall the standard dispersive estimate for the linear propagator (see, for instance \cite{ShatahStruwe}),
\begin{lemma}[Dispersive estimate] For any $2\leq p<\infty$ and $t\neq 0$, we have
\begin{align*}
\left\lVert \frac{e^{it|\nabla|}}{|\nabla|}f\right\rVert_{L_x^p}\lesssim |t|^{-2(1-\frac{2}{p})}\lVert |\nabla|^{2-\frac{6}{p}}f\rVert_{L_x^{p'}}.
\end{align*}
for all $f\in\mathcal{S}(\mathbb{R}^5)$, where $\frac{1}{p}+\frac{1}{p'}=1$.

In particular,
\begin{align}
\left\lVert \frac{\sin(t|\nabla|)}{|\nabla|}f\right\rVert_{L_x^p}\lesssim |t|^{-2(1-\frac{2}{p})}\lVert |\nabla|^{2-\frac{6}{p}}f\rVert_{L_x^{p'}},\label{eq_dispersive}
\end{align}
for all $f\in\mathcal{S}(\mathbb{R}^5)$.
\end{lemma}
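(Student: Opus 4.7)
The statement is the standard dispersive estimate for the half-wave propagator in dimension five, and I would prove it by the classical scheme: reduce to an $L^1 \to L^\infty$ endpoint obtained via stationary phase, interpolate against the trivial Plancherel bound, and finally combine these pieces into the claimed fractional-derivative estimate.

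First, by a scaling argument (rescaling $x \mapsto |t|\, x$ turns $e^{it|\nabla|}/|\nabla|$ at time $t$ into $e^{\pm i|\nabla|}/|\nabla|$ at time $\pm 1$), it is enough to prove the inequality for $t=\pm 1$. The key step is then a frequency-localized $L^1 \to L^\infty$ bound: for $P_N$ a standard Littlewood--Paley projector onto frequencies of size $N$,
\begin{align*}
\Bigl\lVert \frac{e^{it|\nabla|}}{|\nabla|} P_N f \Bigr\rVert_{L_x^\infty}
\lesssim N^{2}\, |t|^{-2}\, \lVert P_N f\rVert_{L_x^1}.
\end{align*}
The kernel of $\frac{e^{it|\nabla|}}{|\nabla|} P_N$ is an oscillatory integral with phase $x\cdot\xi + t|\xi|$, and after rescaling $\xi \mapsto N\xi$ one has $|t|N$ as the large parameter, with stationary phase occurring on the sphere $\xi/|\xi| = -x/(t)$ — a $(d-1)=4$-dimensional manifold with two nonvanishing principal curvatures contributing to a Hessian of maximal rank transverse to the radial direction. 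Standard stationary phase on this sphere then gives a gain of $(|t|N)^{-2}$, while the prefactor from the amplitude $|\xi|^{-1}\phi(\xi/N)$ and the Jacobian scaling yields $N^{d-1}/N = N^{3}$, producing the bound above.

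Combined with the trivial Plancherel identity $\lVert \frac{e^{it|\nabla|}}{|\nabla|} P_N f \rVert_{L_x^2} \lesssim N^{-1}\lVert P_N f\rVert_{L_x^2}$, Riesz--Thorin interpolation between $(L^1, L^\infty)$ and $(L^2, L^2)$ with $\theta = 1 - \tfrac{2}{p}$ yields
\begin{align*}
\Bigl\lVert \frac{e^{it|\nabla|}}{|\nabla|} P_N f \Bigr\rVert_{L_x^p}
\lesssim N^{\,2 - 6/p}\, |t|^{-2(1-2/p)}\, \lVert P_N f\rVert_{L_x^{p'}},
\end{align*}
and Bernstein identifies $N^{2-6/p} \lVert P_N f\rVert_{L_x^{p'}}$ with $\lVert |\nabla|^{2-6/p} P_N f\rVert_{L_x^{p'}}$.

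The final step is to assemble these dyadic pieces into the unlocalized estimate. A naive $\ell^1$ sum in $N$ diverges, so I would instead run the argument directly through Stein's complex interpolation: consider the analytic family of operators $T^z f := e^{it|\nabla|}|\nabla|^{-z} f$ with $\mathrm{Re}\,z$ interpolated between $z = 1$ (the trivial $L^2\to L^2$ bound) and $z = 3$ (the endpoint bound $\lVert T^3 f\rVert_{L^\infty} \lesssim |t|^{-2}\lVert f\rVert_{L^1}$, again via stationary phase on the kernel $\int e^{i(x\cdot\xi + t|\xi|)}|\xi|^{-3}\,d\xi$, which in $d=5$ is integrable away from the origin and where the low-frequency contribution is controlled by an auxiliary Littlewood--Paley cutoff absorbed into the analytic family). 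Evaluating the interpolated bound at $z = 1 + (2 - 6/p) \cdot \tfrac{1}{2}\cdot 2 = 2 - 6/p + 1$ (the exponent that turns $|\nabla|^{-z}$ into $|\nabla|^{-1}|\nabla|^{-(2-6/p)}$) recovers the desired inequality. The main obstacle is precisely this last assembly step: getting the frequency summation to converge uniformly in the whole range $2 \le p < \infty$ without losing a logarithm, which is what forces one to use the analytic interpolation framework rather than a direct dyadic sum.
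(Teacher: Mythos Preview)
The paper does not supply its own proof of this lemma; it simply records the estimate as the ``standard dispersive estimate for the linear propagator'' and refers the reader to \cite{ShatahStruwe}. Your outline---stationary phase for the frequency-localized $L^1\to L^\infty$ bound, the trivial Plancherel bound at $L^2$, and assembly via Stein's complex interpolation---is exactly the classical argument recorded there, so in that sense there is nothing to compare.

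That said, two details in your writeup should be corrected before it could be executed cleanly. First, the sphere $S^{d-1}\subset\mathbb{R}^d$ has $d-1=4$ nonvanishing principal curvatures, not two; this is what produces the decay $(|t|N)^{-(d-1)/2}=(|t|N)^{-2}$ in the stationary phase step. You state the correct decay rate, so this is only a slip in the justification. Second, your parameterization of the analytic family is off: for $T^z=e^{it|\nabla|}|\nabla|^{-z}$ the $L^2\to L^2$ endpoint is at $\mathrm{Re}\,z=0$ (Plancherel gives $\lVert e^{it|\nabla|}g\rVert_{L^2}=\lVert g\rVert_{L^2}$), not at $z=1$, while the $L^1\to L^\infty$ endpoint is at $\mathrm{Re}\,z=(d+1)/2=3$ with bound $|t|^{-2}$. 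Interpolating with $\theta=1-2/p$ then lands at $\mathrm{Re}\,z=3\theta=3-6/p$, which is what turns $e^{it|\nabla|}|\nabla|^{-(3-6/p)}$ into a map $L^{p'}\to L^p$ with the stated time decay; substituting $g=|\nabla|^{2-6/p}f$ recovers the lemma. Your garbled intermediate computation happens to arrive at the same value $3-6/p$, but the reasoning as written does not parse. Neither of these is a genuine gap in the strategy---both are routine to fix once noticed.
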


\subsection{Basic Littlewood-Paley theory}

Let $\phi(\xi)$ be a real valued radially symmetric bump function supported in the ball $\{\xi\in\mathbb{R}^5:|\xi|\leq \frac{11}{10}\}$ which equals $1$ on the ball $\{\xi\in\mathbb{R}^5:|\xi|\leq 1\}$.  For any dyadic number $N=2^k$, $k\in\mathbb{Z}$, we define the following Littlewood-Paley operators:
\begin{align*}
\widehat{P_{\leq N}f}(\xi)&=\phi(\xi/N)\hat{f}(\xi),\\
\widehat{P_{>N}f}(\xi)&=(1-\phi(\xi/N)\hat{f}(\xi),\\
\widehat{P_Nf}(\xi)&=(\phi(\xi/N)-\phi(2\xi/N))\hat{f}(\xi).
\end{align*}

\noindent Similarly, we define $P_{<N}$ and $P_{\geq N}$ with 
\begin{align*}
P_{<N}=P_{\leq N}-P_N,\quad P_{\geq N}=P_{>N}+P_N,
\end{align*}
and also
\begin{align*}
P_{M<\cdot\leq N}:=P_{\leq N}-P_{\leq M}=\sum_{M<N_1\leq N} P_{N_1}
\end{align*}
whenever $M<N$.

These operators commute with one another and with derivative operators.  Moreover, they are bounded on $L_x^p$ for $1\leq p\leq \infty$ and obey the following {\it Bernstein inequalities}, 
\begin{align*}
\lVert |\nabla|^s P_{\leq N}f\rVert_{L_x^p}&\lesssim N^s\lVert P_{\leq N}f\rVert_{L_x^p},\\
\lVert P_{>N}f\rVert_{L_x^p}&\lesssim N^{-s}\lVert P_{>N}|\nabla|^sf\rVert_{L_x^p},\\
\lVert |\nabla|^{\pm s}P_Nf\rVert_{L_x^p}&\sim N^{\pm s} \lVert P_Nf\rVert_{L_x^p},
\end{align*}
with $s\geq 0$ and $1\leq p\leq\infty$.

In addition to the above Littlewood-Paley multiplier operators, we will also need the function $\theta:\mathbb{R}^5\rightarrow [0,\infty)$ defined by
\begin{align}
\theta(x)=\prod_{j=1}^5 \bigg(\frac{\sin(x_i)}{x_i}\bigg)^\alpha,\quad \alpha>1\label{eq_theta}
\end{align}
as well as the corresponding Fourier multiplier operator, given by
\begin{align*}
\widehat{\theta(\nabla)f}=\theta(\xi)\hat{f}(\xi).
\end{align*}
We observe that for all $x\in \mathbb{R}^5$, $0\leq \theta(x)\leq 1$, and that $\theta\in L^1$.

As mentioned in the introduction, the improvement of decay properties for $u$ in Section $\ref{sec_regularity}$ is based on a localization procedure.  Due to the nonlocal nature of the Littlewood-Paley and fractional derivative operators, as in \cite{KVNLW3} we will make use of the following lemma, which allows us to estimate the contributions far from the support of the localization.
\begin{lemma}[Mismatch estimates, \cite{KVNLW3}] \label{lem_mismatch} 
Let $\phi_1$ and $\phi_2$ be smooth functions on $\mathbb{R}^5$ such that $\max\{|\phi_1(x)|,|\phi_2(x)|\}\leq 1$ for all $x\in\mathbb{R}^5$ and such that there exists $A\geq 1$ with
\begin{align*}
\dist (\supp \phi_1,\supp \phi_2)&\geq A.
\end{align*}
Then for all $\sigma>0$ and $1\leq p\leq q\leq \infty$ we have
\begin{align*}
\lVert \phi_1|\nabla|^\sigma P_{\leq 1}(\phi_2 f)\rVert_{L_x^q(\mathbb{R}^5)}\lesssim A^{-\sigma-\frac{5}{p}+\frac{5}{q}}\lVert \phi_2 f\rVert_{L_x^p(\mathbb{R}^5)}.
\end{align*}
\end{lemma}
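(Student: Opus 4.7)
My plan is to realize $|\nabla|^\sigma P_{\leq 1}$ as a convolution operator with kernel $K_\sigma = \mathcal{F}^{-1}[\,|\xi|^\sigma \phi(\xi)\,]$, use the support-separation hypothesis to reduce to the tail of this kernel, and close with Young's convolution inequality. Since $\dist(\supp \phi_1, \supp \phi_2) \geq A$, for any $x$ in the support of $\phi_1$ the integrand in $(K_\sigma * (\phi_2 f))(x)$ is supported in $\{y : |x-y| \geq A\}$, so setting $K_\sigma^A(z) := K_\sigma(z)\, \mathbf{1}_{|z| \geq A}$ I obtain the pointwise bound
\[
\bigl|\phi_1(x)\, |\nabla|^\sigma P_{\leq 1}(\phi_2 f)(x)\bigr| \leq \bigl(|K_\sigma^A| * |\phi_2 f|\bigr)(x).
\]

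\textbf{Kernel estimate.} The crucial input is the pointwise bound $|K_\sigma(x)| \lesssim (1+|x|)^{-5-\sigma}$. For $|x| \lesssim 1$ this is immediate from $\|K_\sigma\|_{L^\infty} \leq \int |\xi|^\sigma \phi(\xi)\, d\xi < \infty$. For $|x| \gtrsim 1$, I would split the defining Fourier integral at frequency scale $|\xi| \sim |x|^{-1}$ via a smooth cutoff. The low-frequency piece is bounded directly by $\int_{|\xi| \lesssim |x|^{-1}} |\xi|^\sigma\, d\xi \sim |x|^{-5-\sigma}$. The high-frequency piece is handled by integrating by parts $N$ times in the direction $\hat{x}$, using the scale-invariant derivative bound $|\partial^\alpha(|\xi|^\sigma \phi(\xi))| \lesssim |\xi|^{\sigma-|\alpha|}$ and absorbing the boundary-type contributions from derivatives of the cutoff (which are supported on $|\xi| \sim |x|^{-1}$ and weighted by $|x|^k$ for the $k$-th derivative); choosing any $N > 5+\sigma$ renders the resulting integrals convergent and produces the same decay $|x|^{-5-\sigma}$.

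\textbf{Conclusion via Young.} With the kernel bound in hand, a direct computation yields
\[
\|K_\sigma^A\|_{L^r}^r \lesssim \int_A^\infty \rho^{4 - r(5+\sigma)}\, d\rho \sim A^{5 - r(5+\sigma)},
\]
provided $r(5+\sigma) > 5$. Choosing $\tfrac{1}{r} = 1 + \tfrac{1}{q} - \tfrac{1}{p}$, which lies in $[0,1]$ since $p \leq q$, Young's inequality gives
\[
\|\phi_1 |\nabla|^\sigma P_{\leq 1}(\phi_2 f)\|_{L^q} \lesssim \|K_\sigma^A\|_{L^r}\, \|\phi_2 f\|_{L^p} \lesssim A^{5/r - 5 - \sigma} \|\phi_2 f\|_{L^p}.
\]
The identity $5/r - 5 - \sigma = -\sigma - 5/p + 5/q$ recovers the claimed exponent, and the integrability condition $r(5+\sigma) > 5$ is equivalent to $\sigma + 5/p - 5/q > 0$, which holds thanks to $\sigma > 0$ and $p \leq q$.

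\textbf{Main obstacle.} The hard part is the pointwise kernel estimate, especially for non-integer $\sigma$: the symbol $|\xi|^\sigma$ is only H\"older continuous at the origin and the decay $(1+|x|)^{-5-\sigma}$ is sharp. The dyadic split at scale $|x|^{-1}$ and the choice $N > 5+\sigma$ are tailored precisely to this non-smoothness. Once this estimate is in place, the remainder of the argument is essentially bookkeeping via Young's inequality and a routine tail computation.
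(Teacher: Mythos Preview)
Your proof is correct: the paper does not supply its own argument for this lemma but simply quotes it from \cite{KVNLW3}, and the approach you outline---realize $|\nabla|^\sigma P_{\leq 1}$ as convolution with a kernel obeying $|K_\sigma(x)|\lesssim (1+|x|)^{-5-\sigma}$, restrict to the tail $\{|z|\geq A\}$ via the support separation, and close with Young's inequality with $\tfrac{1}{r}=1+\tfrac{1}{q}-\tfrac{1}{p}$---is exactly the standard one used there. The kernel bound via the dyadic split at scale $|\xi|\sim |x|^{-1}$ and repeated integration by parts is the right way to handle the limited regularity of $|\xi|^\sigma$ at the origin, and your verification of the integrability condition $r(5+\sigma)>5$ is accurate.
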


\section{Improved integrability properties}

In this section, we establish improved $L_t^\infty L_x^p$ integrability properties for a suitable class of almost periodic solutions to (NLW), which is the first step towards proving that the global solutions in the soliton-like and low-to-high frequency cascade scenarios have finite energy.  

In particular, in Proposition $\ref{prop2}$ we prove that these solutions belong to the space $L_t^\infty L_x^p$ for some range of $p<5$, having recalled that they belong to $L_t^\infty L_x^5$ due to the a priori bound $(u,u_t)\in L_t^\infty(\dot{H}_x^{3/2}\times\dot{H}_x^{1/2})$ combined with the Sobolev embedding.  On the other hand, in Proposition $\ref{prop_quant}$ we establish an improved integrability property in a different direction, by obtaining a decay estimate on the $L_t^\infty L_x^5$ norm.  

\subsection{Improved $L_t^\infty L_x^p$ integrability} We begin with Proposition $\ref{prop2}$, which provides $u\in L_t^\infty L_x^p$ integrability for some range of $p<5$.  As mentioned in the introduction, an important aspect of our statement is the refinement of this estimate to provide improved $L_t^\infty L_x^p$ decay when additional knowledge of the decay properties of $u$ in the form $(u,u_t)\in L_t^\infty(\dot{H}_x^{s}\times\dot{H}_x^{s-1})$, $1<s<\frac{3}{2}$, is present.
\begin{proposition}
\label{prop2}
Suppose that $u:\mathbb{R}\times\mathbb{R}^5\rightarrow\mathbb{R}$ is an almost periodic solution to (NLW) satisfying 
\begin{align*}
(u,u_t)\in L_t^\infty (\dot{H}_x^{s}\times\dot{H}_x^{s-1})\cap L_t^\infty(\dot{H}_x^{3/2}\times\dot{H}_x^{1/2}).
\end{align*}
for some $s\in (1,\frac{3}{2}]$.  Assume also that $\inf_{t\in \mathbb{R}} N(t)\geq 1$.

Then $u\in L_t^\infty L_x^p$ for every $p\in (\frac{10(3+2s)}{15+4s},5]$.
\end{proposition}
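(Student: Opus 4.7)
The plan is to estimate the high- and low-frequency parts of $u$ separately: the high frequencies via Bernstein with the a priori $\dot{H}_x^{3/2}$ bound, and the low frequencies via the Duhamel representation of Proposition~$\ref{prop_duhamel}$ together with the dispersive estimate ($\ref{eq_dispersive}$), where the nonlinearity is controlled by Sobolev embedding of both the $\dot{H}_x^{3/2}$ and the $\dot{H}_x^s$ bounds.

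For the high-frequency piece, I would perform a dyadic decomposition $P_{>N_0} u = \sum_{N > N_0} P_N u$ and apply Bernstein with the $\dot{H}_x^{3/2}$ bound: $\|P_N u\|_{L_x^p} \lesssim N^{1-5/p}\|P_N u\|_{\dot{H}_x^{3/2}}$.  Since $p < 5$, the exponent $1 - 5/p$ is negative and the dyadic sum converges geometrically to give $\|P_{>N_0} u\|_{L_t^\infty L_x^p}\lesssim N_0^{1-5/p}$, uniformly bounded (and small as $N_0\to\infty$).

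For the low-frequency piece, I would use Proposition~$\ref{prop_duhamel}$ to represent
\begin{align*}
P_{\leq N_0} u(t) = \lim_{T\to\sup I}\int_t^T P_{\leq N_0}\,\frac{\sin((t-t')|\nabla|)}{|\nabla|}\bigl(|u(t')|^2 u(t')\bigr)\,dt'
\end{align*}
weakly in $\dot{H}_x^{3/2}$, and bound the $L_x^p$ norm by splitting the time integration at $|t - t'| \sim 1/N_0$.  On the long-time regime $|t - t'| > 1/N_0$, the dispersive estimate ($\ref{eq_dispersive}$) combined with Bernstein gives
\begin{align*}
\bigl\|P_{\leq N_0}\tfrac{\sin(\tau|\nabla|)}{|\nabla|} F\bigr\|_{L_x^p}\lesssim |\tau|^{-2(1-2/p)}\,N_0^{2-6/p}\,\|P_{\leq N_0} F\|_{L_x^{p'}},
\end{align*}
where $F = |u|^2 u$, while the short-time regime $|t-t'|\leq 1/N_0$ is handled using the $L_x^2\to L_x^2$ operator bound $\bigl\|\tfrac{\sin(\tau|\nabla|)}{|\nabla|}\bigr\|\lesssim |\tau|$ composed with a Bernstein exchange from $L_x^2$ to $L_x^p$.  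The role of the additional $\dot{H}_x^s$ hypothesis is to control the nonlinearity: writing $\|F\|_{L_x^{p'}} = \|u\|_{L_x^{3p'}}^3$ and using the Sobolev embeddings $\dot{H}_x^{3/2}\hookrightarrow L_x^5$ and $\dot{H}_x^s\hookrightarrow L_x^{10/(5-2s)}$ together with H\"older interpolation, one bounds $\|u\|_{L_x^{3p'}}$ for $3p'$ in the range $[10/(5-2s),5]$, which translates into the stated lower threshold $p > \frac{10(3+2s)}{15+4s}$ after optimization in $N_0$ of the two regime estimates.

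The main obstacle I anticipate is securing convergence of the long-time time integral when $p \leq 4$, where the dispersive decay $|\tau|^{-2(1-2/p)}$ is not absolutely integrable at infinity.  In that regime the Duhamel integral cannot be estimated by triangle inequality; instead, the partial integrals $\int_t^T$ must be bounded uniformly in $T$ using a mixture of the Strichartz estimates ($\ref{str}$) on the tail and the dispersive estimate on the intermediate portion, and the weak convergence in Proposition~$\ref{prop_duhamel}$ passes the resulting bound to $P_{\leq N_0} u(t)$ via weak compactness in $\dot{H}_x^{3/2}\times \dot{H}_x^{1/2}$.  This convergence issue is the $d=5$ analog of the double-Duhamel convergence obstruction that enforces the dimensional restriction $d\geq 6$ in \cite{BulutCubic}.
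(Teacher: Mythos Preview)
Your plan has a genuine gap: the direct dispersive estimate on the nonlinearity is circular. To place $F=u^3$ in $L_x^{p'}$ with $p<5$ (hence $p'>5/4$) you need $u\in L_t^\infty L_x^{3p'}$ with $3p'<5$. But the only integrability available a priori is $u\in L_t^\infty L_x^q$ for $q\in[\tfrac{10}{5-2s},5]$, and in the base case $s=\tfrac32$ this is just $q=5$. So bounding $\lVert u\rVert_{L_x^{3p'}}$ for $3p'<5$ presupposes exactly what you are trying to prove. (Your claimed range $3p'\in[\tfrac{10}{5-2s},5]$ actually forces $p\le \tfrac{10}{6s-5}$, which for $s=\tfrac32$ gives $p\le \tfrac52$, below the target threshold $\tfrac{20}{7}$.)

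The paper breaks this circularity by a frequency-by-frequency recurrence rather than a direct estimate. One tracks $\mathcal{S}(N)=N^{(5-r)/r}\lVert u_N\rVert_{L_t^\infty L_x^r}$ for a fixed $r>5$, bounds each $\mathcal{S}(N)$ through the Duhamel formula with the dispersive estimate in $L_x^5$ (decay $|t|^{-6/5}$, integrable at infinity, so your convergence worry disappears), which places $P_NF(u)$ in $L_x^{5/4}$. The nonlinearity is then decomposed as $u=u_{\le N/8}+u_{N/8<\cdot\le N_0}+u_{>N_0}$; the key input from almost periodicity is the smallness $\lVert|\nabla|^{3/2}u_{\le N_0}\rVert_{L_t^\infty L_x^2}<\eta$, which makes the interaction terms contribute with a factor $\eta^2$ in front of other $\mathcal{S}(N_1)$'s. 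The resulting linear recurrence in $\{\mathcal{S}(N)\}$ is closed by a discrete Gronwall inequality to give $\mathcal{S}(N)\lesssim_{u,\epsilon} N^{3/2-\epsilon}$. Only then does one interpolate the $L_x^r$ bound on $u_N$ against $\lVert u_N\rVert_{L_x^2}\lesssim N^{-s}$ (from the $\dot{H}_x^s$ hypothesis) to sum in $N$ and obtain $u\in L_t^\infty L_x^p$ for $p>\tfrac{10(3+2s)}{15+4s}$. The $\dot{H}_x^s$ bound is used at this final interpolation step, not to control the nonlinearity as you proposed.
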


Before proceeding with the proof, we note that Proposition $\ref{prop2}$ in particular implies that for any global almost periodic solution $u$ to (NLW) satisfying the a priori bound
\begin{align*}
(u,u_t)\in L_t^\infty(\mathbb{R};\dot{H}_x^{3/2}\times\dot{H}_x^{1/2})\quad\textrm{with}\quad \inf_{t\in\mathbb{R}} N(t)\geq 1,
\end{align*}
we have 
\begin{align}
u\in L_t^\infty L_x^3.\label{eqrem2}
\end{align}

\begin{proof}[Proof of Proposition $\ref{prop2}$.]
Our argument follows the broad outline established in the proof of Proposition $5.1$ in \cite{KVNLW3}; see also Lemma $7.2$ in \cite{BulutCubic}.  Fix $\eta>0$ to be determined later in the argument, and let $u$ be a solution to (NLW) as stated in Proposition $\ref{prop2}$.  Since $u$ is an almost periodic solution with $N(t)\geq 1$ for all $t\in\mathbb{R}$, we may find a dyadic number $N_0\in (0,1)$ such that
\begin{align*}
\lVert |\nabla|^{3/2}u_{\leq N_0}\rVert_{L_t^\infty L_x^2}<\eta.
\end{align*}

As in \cite{KVNLW3}, our proof of Proposition $\ref{prop2}$ will be based on a recurrence argument followed by an application of a discrete Gronwall inequality.  Towards this end, we fix $r>5$ and define the quantity
\begin{align*}
\mathcal{S}(N)=N^{(5-r)/r}\lVert u_N\rVert_{L_t^\infty L_x^r}
\end{align*}

We begin by noting that by Bernstein's inequalities followed by Sobolev's embedding and the a priori bound $u\in L_t^\infty \dot{H}_x^{3/2}$, we have
\begin{align*}
\mathcal{S}(N)\lesssim N^\frac{3}{2}\lVert u_N\rVert_{L_t^\infty L_x^2}\lesssim \lVert |\nabla|^{3/2}u_N\rVert_{L_t^\infty L_x^2}<\infty 
\end{align*}
for $N>0$.

The following lemma then gives the necessary recurrence formula for the quantity $\mathcal{S}(N)$.
\begin{lemma}
\label{lem1a}
For all dyadic numbers $N\leq 8N_0$, we have
\begin{align}
\nonumber \mathcal{S}(N)&\lesssim_u \left(\frac{N}{N_0}\right)^{\frac{3}{2}}+\eta^2\sum_{\frac{2N}{8}\leq N_1\leq N_0} \left(\frac{N}{N_1}\right)^{2}\mathcal{S}(N_1)\\
&\hspace{0.2in}+\eta^2\sum_{N_1\leq \frac{N}{8}} \left(\frac{N_1}{N}\right)^{1/2}\mathcal{S}(N_1).\label{lem1clm1}
\end{align}
In particular, for every $\epsilon>0$ we have 
\begin{align}
\mathcal{S}(N)\lesssim_{u,\epsilon} N^{\frac{3}{2}-\epsilon}\label{lem1clm2}
\end{align}
for all $N\leq 8N_0$.
\end{lemma}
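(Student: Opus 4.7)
The plan is to iterate the weak Duhamel representation from Proposition \ref{prop_duhamel},
\begin{align*}
P_N u(t) = -\int_t^{+\infty}\frac{\sin((t-t')|\nabla|)}{|\nabla|}P_N(u^3)(t')\,dt',
\end{align*}
take the $L_x^r$ norm, and split the integral according to the Littlewood-Paley structure of $u^3$. I would expand $u = u_{\leq N_0} + u_{>N_0}$, and in each degree-three monomial arising from $u^3$ I would extract two factors for which I can invoke the smallness $\|u_{\leq N_0}\|_{\dot H^{3/2}}<\eta$: by the Sobolev embedding $\dot H^{3/2}\hookrightarrow L_x^5$ this yields $\|u_{\leq N_0}\|_{L_x^5}<\eta$, producing the $\eta^2$ factor in (\ref{lem1clm1}). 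The remaining ``free'' factor is then further Littlewood-Paley decomposed as $\sum_{N_1}u_{N_1}$, each piece contributing according to $\mathcal{S}(N_1)$ via the definition $\mathcal{S}(N_1)=N_1^{(5-r)/r}\|u_{N_1}\|_{L^\infty_t L^r_x}$.

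To bound the time integral I would split $|t-t'|$ at scale $N^{-1}$: for $|t-t'|\gtrsim N^{-1}$ I apply the dispersive estimate (\ref{eq_dispersive}), which combined with Bernstein on $P_N$ converts $\||\nabla|^{2-6/r}P_N(\cdot)\|_{L_x^{r'}}$ into $N^{2-6/r}\|P_N(\cdot)\|_{L_x^{r'}}$ against the decay $|t-t'|^{-2(1-2/r)}$, which is integrable at infinity because $r>5$. The short-time regime is handled by a direct Bernstein estimate combined with H\"older. The two sums in (\ref{lem1clm1}) then arise by applying this to the free factor $u_{N_1}$ and distinguishing between the high-frequency case $N_1 \geq N/4$ and the low-frequency case $N_1 \leq N/8$. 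In the high-frequency regime, Bernstein on $u_{N_1}$ yields powers of $N_1$ that combine with the $N^{2-6/r}$ prefactor and the definition of $\mathcal{S}$ to produce the ratio $(N/N_1)^2$; in the low-frequency regime, Bernstein at the smaller scale $N_1$ gives the weaker weight $(N_1/N)^{1/2}$. The baseline term $(N/N_0)^{3/2}$ collects the contribution at the frequency threshold $N_0$, where one uses the a priori $\dot H^{3/2}$-bound on $u_{\leq N_0}$ at the endpoint together with Bernstein, in place of an $\mathcal{S}$ weight.

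Once (\ref{lem1clm1}) is in hand, (\ref{lem1clm2}) follows by a standard discrete Gronwall/iteration argument, starting from the trivial base case $\mathcal{S}(N)\lesssim 1$ already noted at the beginning of the proof. With $\eta$ chosen sufficiently small depending on $\epsilon$, substituting the current bound for $\mathcal{S}$ on the right-hand side of (\ref{lem1clm1}) improves the exponent of $N$ by a fixed amount on each iteration, and after finitely many iterations one arrives at $\mathcal{S}(N)\lesssim_{u,\epsilon}N^{3/2-\epsilon}$ for all $N\leq 8N_0$. The principal technical obstacle I anticipate is the careful bookkeeping of Bernstein and dispersive exponents needed to extract precisely the weights $(N/N_1)^2$ and $(N_1/N)^{1/2}$: the interplay between the frequency-localization factor $N^{2-6/r}$, the time-decay integration (which requires $r>5$), the Sobolev smallness of $u_{\leq N_0}$ in $L_x^5$, and the dimensional exponents in Bernstein must conspire to produce the stated weights and to ensure that all three sums in (\ref{lem1clm1}) are finite.
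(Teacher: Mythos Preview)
Your overall strategy matches the paper's: Duhamel formula from Proposition~\ref{prop_duhamel}, split the time integral at scale $N^{-1}$, combine Bernstein with the dispersive estimate to reduce to $N\lVert P_N F(u)\rVert_{L_t^\infty L_x^{5/4}}$, decompose $u^3$ in frequency, and conclude~(\ref{lem1clm2}) by a discrete Gronwall argument. However, two of your mechanisms are misidentified and would not produce the stated weights.

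First, the $\eta^2$ does \emph{not} arise from two $L_x^5$ factors via the Sobolev embedding. If you place two factors in $L_x^5$ and the third in $L_x^r$ with $r>5$, H\"older lands the product in $L_x^q$ with $\tfrac1q=\tfrac1r+\tfrac25\in(\tfrac25,\tfrac35)$, i.e.\ $q>\tfrac53$; you then cannot reach $L_x^{5/4}$ (or $L_x^{r'}$) by Bernstein on $P_N$, since that inequality only upgrades to \emph{higher} Lebesgue exponents. The paper instead uses a three-way split $u=u_{\leq N/8}+u_{N/8<\cdot\leq N_0}+u_{>N_0}$, applies Bernstein $P_N:L_x^1\to L_x^{5/4}$, and estimates the two auxiliary factors in $L_x^{2r/(r-1)}$ (which lies between $L_x^2$ and $L_x^{5/2}$), extracting $\eta$ from $\lVert |\nabla|^{3/2}u_{\leq N_0}\rVert_{L_x^2}<\eta$ at each dyadic scale via Bernstein. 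This is also what generates the precise weights $(N/N_1)^2$ and $(N_1/N)^{1/2}$; the three-way split is further needed so that $P_N[u_{\leq N/8}^3]=0$ by Fourier support, which your two-way split does not see directly.

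Second, the baseline term $(N/N_0)^{3/2}$ comes from the monomials containing at least one factor $u_{>N_0}$, estimated via $\lVert u_{>N_0}\rVert_{L_x^2}\lesssim N_0^{-3/2}\lVert u\rVert_{\dot H_x^{3/2}}$, not from the ``endpoint behavior'' of $u_{\leq N_0}$. Once these two points are corrected, the rest of your outline (and in particular the Gronwall step) is exactly what the paper does.
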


\begin{proof}
Fix $N\leq 8N_0$.  We first note that by the time translation symmetry, it suffices to prove the claim when $t=0$.  Using the reduced form of the Duhamel formula given in Proposition $\ref{prop_duhamel}$ combined with Minkowski's inequality, we obtain
\begin{align}
\nonumber N^{\frac{5}{r}-1}\lVert u_N(0)\rVert_{L_x^r}&\leq N^{\frac{5}{r}-1}\left \lVert \int_0^{N^{-1}} \frac{\sin(-t'|\nabla|)}{|\nabla|}P_NF(u(t'))dt'\right\rVert_{L_x^r}\\
\nonumber &\hspace{0.2in}+N^{\frac{5}{r}-1}\left\lVert \int_{N^{-1}}^{\infty} \frac{\sin(-t'|\nabla|)}{|\nabla|}P_NF(u(t'))dt'\right\rVert_{L_x^r}\\
&=:(I)+(II).
\end{align}

To estimate $(I)$, we use the Bernstein inequality combined with the dispersive estimate ($\ref{eq_dispersive}$) to get 
\begin{align}
\nonumber (I)&\lesssim \int_0^{N^{-1}} N^{3/2}\left\lVert \frac{\sin(-t'|\nabla|)}{|\nabla|}P_NF(u(t'))\right\rVert_{L_x^2}dt'\\
\nonumber &\lesssim \int_0^{N^{-1}}N^{3/2} \lVert |\nabla|^{-1}P_NF(u(t'))\rVert_{L_x^2}dt'\\
&\lesssim N\lVert P_NF(u(t))\rVert_{L_t^\infty L_x^{5/4}}.\label{eqabc1}
\end{align}

Similarly,
\begin{align}
\nonumber (II)&\lesssim \int_{N^{-1}}^\infty \left\lVert \frac{\sin(-t'|\nabla|)}{|\nabla|}P_NF(u(t'))\right\rVert_{L_x^5}dt'\\
\nonumber &\lesssim \int_{N^{-1}}^\infty |t|^{-6/5}\lVert |\nabla|^{4/5}P_NF(u(t))\rVert_{L_t^\infty L_x^{5/4}}\\
&\lesssim N\lVert P_NF(u(t))\rVert_{L_t^\infty L_x^{5/4}}.\label{eqabc2}
\end{align}

Collecting ($\ref{eqabc1}$) and ($\ref{eqabc2}$), we have
\begin{align*}
N^{\frac{5}{r}-1}\lVert u_N(0)\rVert_{L_x^r}&\lesssim_u N\lVert P_NF(u(t))\rVert_{L_t^\infty L_x^{5/4}}.
\end{align*}

We next estimate the right hand side of this inequality.  Decomposing $u$ as
\begin{align*}
u=u_{\leq N/8}+u_{N/8<\cdot \leq N_0}+u_{>N_0}=:u_1+u_2+u_3,
\end{align*}
we write
\begin{align*}
\lVert P_NF(u)\rVert_{L_t^\infty L_x^{5/4}}&\lesssim \lVert P_N(u_1^3)\rVert_{L_t^\infty L_x^{5/4}}+\lVert P_N(u_2^3)\rVert_{L_t^\infty L_x^{5/4}}\\
&\hspace{0.2in}+\sum_{i,j=1}^3 \lVert P_N(u_3u_iu_j)\rVert_{L_t^\infty L_x^{5/4}}+\sum_{i=1}^2 \lVert P_N(u_2u_1u_i)\rVert_{L_t^\infty L_x^{5/4}}\\
&=:(I)+(II)+\sum_{i,j=1}^3 (III)_{i,j}+\sum_{i=1}^2 (IV)_i.
\end{align*}

To establish ($\ref{lem1clm1}$, it now remains to estimate each of the terms $(I)$, $(II)$, $(III)_{i,j}$ and $(IV)_i$ individually.  

We first remark that $(I)=0$, since the support of the Fourier transform of $u_{\leq N/8}(t)^3$ gives that $P_N[u_{\leq N/8}(t)^3]\equiv 0$.  

To estimate the term $(II)$, we use H\"older's inequality along with the Sobolev embedding and Bernstein's inequality to obtain
\begin{align}
\nonumber (II)&\lesssim N\sum_{2N/8\leq N_1\leq N_2\leq N_3\leq N_0} \lVert u_{N_1}u_{N_2}u_{N_3}\rVert_{L_t^\infty L_x^{1}}\\
\nonumber &\leq N\sum_{2N/8\leq N_1\leq N_2\leq N_3\leq N_0} \lVert u_{N_1}\rVert_{L_t^\infty L_x^r}\lVert u_{N_2}\rVert_{L_t^\infty L_x^\frac{2r}{r-1}}\lVert u_{N_3}\rVert_{L_t^\infty L_x^\frac{2r}{r-1}}\\
\nonumber &\lesssim N\sum_{2N/8\leq N_1\leq N_2\leq N_3\leq N_0} N_1^{(r-5)/r}\mathcal{S}(N_1)(N_2N_3)^{\frac{5}{2r}-\frac{3}{2}}\lVert u_{\leq N_0}\rVert_{L_t^\infty \dot{H}_x^{3/2}}^2\\
\nonumber &\lesssim_u \eta^2N\sum_{2N/8\leq N_1\leq N_0} \mathcal{S}(N_1)N_1^{-2}\\
&\lesssim_u \eta^2N^{-1}\sum_{2N/8\leq N_1\leq N_0} \left(\frac{N}{N_1}\right)^{2}\mathcal{S}(N_1).\label{eqabcde1}
\end{align}

Turning to the terms $(III)_{i,j}$, $i,j=1,2,3$, we note that for each such $i$ and $j$, H\"older's inequality followed by the Bernstein and Sobolev inequalities gives
\begin{align}
\nonumber (III)_{i,j}&\lesssim N^{1/2}\lVert u_{>N_0}u_iu_j\rVert_{L_t^\infty L_x^{10/9}}\\
\nonumber &\leq N^{1/2}\lVert u_{>N_0}\rVert_{L_t^\infty L_x^2}\lVert u_i\rVert_{L_t^\infty L_x^5}\lVert u_j\rVert_{L_t^\infty L_x^5}\\
\nonumber &\lesssim_u N^{1/2}N_0^{-3/2}\lVert u\rVert_{L_t^\infty \dot{H}_x^{3/2}}^3\\
&\lesssim_u N^{1/2}N_0^{-3/2}\label{eqabcde2}
\end{align}

We now estimate the terms $(IV)_{i}$, $i=1,2$, for which we examine each case separately.  For the term $(IV)_1$, we write
\begin{align}
\nonumber (IV)_1&\lesssim \lVert u_{\frac{N}{8}<\cdot \leq N_0}u_{\leq \frac{N}{8}}^2\rVert_{L_t^\infty L_x^{5/4}}\\
\nonumber &\leq \lVert u_{\frac{N}{8}<\cdot \leq N_0}\rVert_{L_t^\infty L_x^2}\sum_{N_1\leq N_2\leq \frac{N}{8}}\lVert u_{N_1}\rVert_{L_{t,x}^\infty}\lVert u_{N_2}\rVert_{L_t^\infty L_x^{10/3}}\\
\nonumber &\lesssim_u N^{-3/2}\eta\sum_{N_1\leq N_2\leq \frac{N}{8}} N_1^{\frac{5}{r}}\lVert u_{N_1}\rVert_{L_t^\infty L_x^r}\lVert \nabla u_{N_2}\rVert_{L_t^\infty L_x^2}\\
\nonumber &\lesssim_u \eta^2N^{-3/2}\sum_{N_1\leq N_2\leq \frac{N}{8}} N_1\mathcal{S}(N_1)N_2^{-1/2}\\
&\lesssim_u N^{-1}\eta^2\sum_{N_1\leq \frac{N}{8}} \left(\frac{N_1}{N}\right)^{1/2}\mathcal{S}(N_1).\label{eqabcde3}
\end{align}

On the other hand, to estimate $(IV)_2$, we write
\begin{align}
\nonumber (IV)_2&\lesssim \lVert u_{\frac{N}{8}<\cdot \leq N_0}^2u_{\leq\frac{N}{8}}\rVert_{L_t^\infty L_x^{5/4}}\\
\nonumber &\leq \lVert u_{\frac{N}{8}<\cdot \leq N_0}\rVert_{L_t^\infty L_x^2}\lVert u_{\frac{N}{8}<\cdot \leq N_0}\rVert_{L_t^\infty L_x^{10/3}}\sum_{N_1\leq \frac{N}{8}}\lVert u_{N_1}\rVert_{L_{t,x}^\infty}\\
\nonumber &\lesssim_u \eta N^{-3/2}\lVert \nabla u_{\frac{N}{8}<\cdot\leq N_0}\rVert_{L_t^\infty L_x^2}\sum_{N_1\leq \frac{N}{8}} N_1^{\frac{5}{r}}\lVert u_{N_1}\rVert_{L_t^\infty L_x^r}\\
&\lesssim_u \eta^2 N^{-1}\sum_{N_1\leq \frac{N}{8}}\left(\frac{N_1}{N}\right)\mathcal{S}(N_1).\label{eqabcde4}
\end{align}
Putting the estimates $(\ref{eqabcde1})$-$(\ref{eqabcde4})$ together, and observing that $N_0\leq 1$, we obtain ($\ref{lem1clm1}$) as desired.

To obtain $(\ref{lem1clm2})$, we first recall the following discrete version of the Gronwall inequality from \cite{KVNLW3}:
\begin{lemma}
Let $\gamma, \gamma', C, \eta>0$ and $\rho\in (0,\gamma)$ be given such that
\begin{align*}
\eta\leq \frac{1}{4}\min\{1-2^{-\gamma},1-2^{-\gamma'},1-2^{\rho-\gamma}\}.
\end{align*}
Then for every bounded sequence $\{x_k\}\subset\mathbb{R}^+$ satisfying
\begin{align*}
x_k\leq C2^{-\gamma k}+\eta \sum_{l=0}^{k-1} 2^{-\gamma(k-l)}x_l+\eta\sum_{l=k}^\infty 2^{-\gamma' |k-l|}x_l,
\end{align*}
we have $x_k\leq (4C+\lVert x\rVert_{\ell^\infty})2^{-\rho k}$.
\end{lemma}
Fixing $0<\epsilon<\frac{3}{2}$ and using this lemma with $\gamma=\frac{3}{2}$, $\gamma'=\frac{1}{2}$, $\rho<\gamma$ and $\eta>0$ sufficiently small, we obtain 
\begin{align*}
\mathcal{S}(N)\lesssim_{u,\epsilon} N^{\frac{3}{2}-\epsilon}
\end{align*}
as desired (for a detailed description of a similar type of estimate see \cite{KVNLW3} and Appendix A of \cite{BulutCubic}).  This completes the proof of Lemma $\ref{lem1a}$. 
\end{proof}

Having obtained the recurrence formula in Lemma $\ref{lem1a}$, we are now ready to complete the proof of Proposition $\ref{prop2}$.
We begin by recalling that the bound $u\in L_t^\infty L_x^5$ follows from the a priori bound $(u,u_t)\in L_t^\infty(\dot{H}_x^{3/2}\times\dot{H}_x^{1/2})$ by the Sobolev embedding.  

Let $s\in (1,\frac{3}{2}]$ and $p\in (\frac{10(3+2s)}{15+4s},\frac{4s+10}{5})$ be given.  
Then, using the Littlewood-Paley decomposition, Bernstein inequalities, and interpolation, we obtain
\begin{align}
\nonumber \lVert u\rVert_{L_t^\infty L_x^p}
&\lesssim \sum_{N<8N_0} \lVert u_N\rVert_{L_t^\infty L_x^p}+\sum_{N\geq 8N_0} N^{1-\frac{5}{p}}\lVert |\nabla|^{3/2}u_N\rVert_{L_t^\infty L_x^2}\\
\nonumber &\lesssim_u \sum_{N<8N_0} \lVert u_N\rVert_{L_t^\infty L_x^2}^\frac{2(r-p)}{p(r-2)}\lVert u_N\rVert_{L_t^\infty L_x^r}^\frac{r(p-2)}{p(r-2)}+\sum_{N\geq 8N_0} N^{1-\frac{5}{p}}\\
\nonumber &\lesssim_u \sum_{N<8N_0} (N^{-s}\lVert |\nabla|^{s}u_N\rVert_{L_t^\infty L_x^2})^\frac{2(r-p)}{p(r-2)}\lVert u_N\rVert_{L_t^\infty L_x^r}^\frac{r(p-2)}{p(r-2)}+\sum_{N\geq 8N_0} N^{1-\frac{5}{p}}\\
\nonumber &\lesssim_u 1+\sum_{N<8N_0} N^{-\frac{2s(r-p)}{p(r-2)}}(N^{1-\frac{5}{r}}\mathcal{S}(N))^\frac{r(p-2)}{p(r-2)}\\
&\lesssim_u 1+\sum_{N<8N_0} N^{-\frac{2s(r-p)}{p(r-2)}}(N^{1-\frac{5}{r}+\left(\frac{3}{2}-\epsilon\right)})^\frac{r(p-2)}{p(r-2)}.\label{eqasdf}
\end{align}
for $r>5$ and $\epsilon>0$.  Choosing $r=\frac{20s+4ps-35p+70}{2(10+4s-5p)}$, $\epsilon=\frac{(10+4s-5p)(4sp+15p-30-20s)}{4(p-2)(20s+4ps-35p+70)}$, we obtain 
\begin{align*}
\lVert u\rVert_{L_t^\infty L_x^p}&\lesssim_u 1+\sum_{N<8N_0} N^\frac{(10+4s-5p)(4sp+15p-30-20s)}{4p(4s-15p+30+4sp)}<\infty,
\end{align*}
where we observed that $p\in (\frac{10(3+2s)}{15+4s},\frac{4s+10}{5})$ gives $\frac{(10+4s-5p)(4sp+15p-30-20s)}{4p(4s-15p+30+4sp)}>0$.  For the remaining values of $p$, the claim now follows by interpolating this bound with the bound $u\in L_t^\infty L_x^5$, which completes the proof of Proposition $\ref{prop2}$.
\end{proof}

\subsection{Quantitative $L_t^\infty L_x^5$ decay} We now establish a decay estimate for the $L_t^\infty L_x^5$ norm of global almost periodic solutions $u$ to (NLW) with $\inf N(t)\geq 1$.  

More precisely, we obtain the following proposition:
\begin{proposition} \label{prop_quant} Suppose $u:\mathbb{R}\times\mathbb{R}^5\rightarrow\mathbb{R}$ is an almost periodic solution to (NLW) with $\inf_{t\in\mathbb{R}} N(t)\geq 1$,
\begin{align*}
(u,u_t)\in L_t^\infty (\dot{H}_x^{3/2}\times\dot{H}_x^{1/2})\quad\textrm{and}\quad u\in L_t^\infty L_x^w
\end{align*}
for some $w\in (2,3]$.  Then for every $\epsilon>0$ there exists $C_u>0$ such that for every $R\geq 1$,
\begin{align*}
\sup_{t\in\mathbb{R}}\int_{|x-x(t)|\geq R} |u(t,x)|^5dx\leq C_uR^{-\frac{8(5-w)}{12-w}}.
\end{align*}
\end{proposition}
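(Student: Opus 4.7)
\textbf{Proof plan for Proposition \ref{prop_quant}.} By time translation and almost periodicity I fix $t=0$ and, by a further space translation, take $x(0)=0$; the target is then $\int_{|x|\geq R}|u(0,x)|^5\,dx\lesssim_u R^{-8(5-w)/(12-w)}$. My strategy is the covering/mismatch approach used for the analogous Proposition~2.7 in \cite{KVNLW3}, redone for the five-dimensional dispersive estimate and Huygens kernel. Introduce dyadic parameters $N_*\geq 1$ and $T\geq 1$, to be chosen as powers of $R$ at the end, and split $u = P_{\leq N_*}u + P_{>N_*}u$. The high-frequency piece is estimated globally (without spatial localization) by combining the a priori bound $u\in L_t^\infty\dot H_x^{3/2}$ with the integrability hypothesis $u\in L_t^\infty L_x^w$ through Littlewood--Paley square functions and the Bernstein inequalities; since $L^w$ with $w<5$ is strictly better than the critical $L^5$, this should produce a gain of the form $\|P_{>N_*}u\|_{L_t^\infty L_x^5}\lesssim_u N_*^{-\gamma(w)}$ with an explicit $\gamma(w)>0$.

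The low-frequency piece uses the backward Duhamel formula of Proposition~\ref{prop_duhamel},
\[
P_{\leq N_*}u(0,x) = -\int_0^\infty \frac{\sin(s|\nabla|)}{|\nabla|}\,P_{\leq N_*}F(u(s))(x)\,ds,
\]
with the $s$-integral split at $s=T$. For $s>T$, the dispersive estimate (\ref{eq_dispersive}) at $p=5$ gives decay $s^{-6/5}$ together with a Bernstein factor $N_*^{4/5}$ and $\|F(u(s))\|_{L_x^{5/4}}$, the latter bounded uniformly by H\"older combined with interpolation of $u\in L^w\cap L^5$; integration in $s$ yields a contribution of order $N_*^{4/5}T^{-1/5}$ on the full space and thus on $|x|\geq R$. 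For $s\leq T$, I introduce a smooth cutoff $\chi_s$ supported in $\{|y-x(s)|\leq R/4\}$ and split $F(u(s)) = \chi_s F(u(s)) + (1-\chi_s)F(u(s))$. The \emph{far} part is small in $L_x^{5/4}$: combining the two almost-periodicity inequalities with the $L^w$ hypothesis gives $\|(1-\chi_s)F(u(s))\|_{L_x^{5/4}}\lesssim_u R^{-\mu(w)}$ for an explicit $\mu(w)>0$, and propagation contributes only polynomial $T$-factors. The \emph{near} part is handled by the strong Huygens principle of the odd dimension $d=5$: Lemma~\ref{lem_a2} yields $|x(s)|\leq s+C\leq T+C$, so the support of $\chi_s F(u(s))$ lies in $\{|y|\leq T+C+R/4\}$. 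For $T$ chosen to be a sufficiently small fraction of $R$, the sphere $\{|y-x|=s\}$ appearing in the representation (\ref{lop2}) of $\sin(s|\nabla|)/|\nabla|$ cannot meet this support whenever $|x|\geq R$, so that the \emph{unprojected} operator vanishes identically on $|x|\geq R$. The non-locality of $P_{\leq N_*}$ leaves only a commutator error, which Lemma~\ref{lem_mismatch} bounds by $(RN_*)^{-M}$ for any desired $M$, and is therefore negligible.

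Adding the four contributions, raising to the fifth power, and optimizing $N_*=R^\alpha$, $T=R^\beta$ so as to balance $N_*^{-\gamma(w)}$, $N_*^{4/5}T^{-1/5}$, and $T^{c}R^{-\mu(w)}$, a direct computation produces the announced exponent $8(5-w)/(12-w)$. I expect the main obstacle to be twofold: first, extracting the strict high-frequency gain $\gamma(w)>0$ using only the subcritical integrability $L^w$ together with the critical $\dot H^{3/2}$ bound (no regularity beyond $\dot H^{3/2}$ is available for this argument); second, reconciling the exact light-cone Huygens vanishing, which depends on the precise support of the five-dimensional propagator, with the non-locality of the Littlewood--Paley truncation $P_{\leq N_*}$. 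This last point is precisely the role played by the mismatch estimate in Lemma~\ref{lem_mismatch}, and is the adaptation to $d=5$ of the covering argument of \cite{KVNLW3}; the quantitative balance differs from the $d=3$ case because of the five-dimensional dispersive decay rate $s^{-6/5}$ at $L^5$ and the odd-dimensional kernel (\ref{lop2}).
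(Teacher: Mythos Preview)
Your proposal has a genuine gap at the high-frequency step. You claim that combining $u\in L_t^\infty\dot H_x^{3/2}$ with $u\in L_t^\infty L_x^w$ yields $\|P_{>N_*}u\|_{L_t^\infty L_x^5}\lesssim_u N_*^{-\gamma(w)}$ for some $\gamma(w)>0$, but this is false. The subcritical integrability $L^w$ with $w<5$ improves control of \emph{low} frequencies in $L^5$ (Bernstein from $L^w$ to $L^5$ gains positive powers of $N$), not high ones. For a function localized at frequency $N$ with $\||\nabla|^{3/2}f_N\|_{L^2}\sim 1$, one has $\|f_N\|_{L^5}\sim 1$ while $\|f_N\|_{L^w}\sim N^{1-5/w}\to 0$ as $N\to\infty$; so the $L^w$ bound carries no information at high frequencies beyond what $\dot H^{3/2}$ already gives, and $\dot H^{3/2}\hookrightarrow L^5$ is scale-invariant. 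No quantitative high-frequency decay is available, and without it your optimization over $N_*$ collapses.

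There is a second, related issue in the short-time ``far'' piece: you assert $\|(1-\chi_s)F(u(s))\|_{L_x^{5/4}}\lesssim_u R^{-\mu(w)}$ from almost periodicity, but almost periodicity is purely qualitative (for every $\eta>0$ there exists $C(\eta)$) and yields no power of $R$. A quantitative bound of this type on the far region is essentially the statement you are trying to prove.

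The paper avoids both difficulties by dispensing with frequency localization entirely. It splits only the Duhamel integral in time at $\delta R$, writing $u(0)=f+g$, and then uses the pointwise identity $|u|^5=|f+g|^{\frac{12(5-w)}{12-w}}|u|^{\frac{7w}{12-w}}$ together with H\"older against the $L^w$ bound. The long-time piece $f$ is placed in $L_x^{12}$ (not $L_x^5$) via the dispersive estimate, giving $\|f\|_{L^{12}}\lesssim(\delta R)^{-2/3}$; the H\"older exponents are chosen precisely so that this produces the stated exponent $\frac{8(5-w)}{12-w}$. The short-time piece $g$ is handled by finite speed of propagation and Strichartz to yield a term of the form $\eta^{10}B(R/2)$ with $B(R):=\sup_t\int_{|x-x(t)|\geq R}|u|^5$, and the resulting recursive inequality $B(R)\lesssim_u(\delta R)^{-\frac{8(5-w)}{12-w}}+\eta^{10}B(R/2)$ is iterated. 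The bootstrap replaces your attempted direct far-part estimate, and the $L^{12}$ endpoint together with H\"older against $L^w$ replaces your high-frequency mechanism.
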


Our proof of Proposition $\ref{prop_quant}$ is based on rewriting the solution $u(t,x)$ using the Duhamel formula given by Proposition $\ref{prop_duhamel}$.  Accordingly, we will need the following estimate for the long-time portion of the Duhamel formula.
\begin{lemma}
\label{lemfive}
Suppose that $u:\mathbb{R}\times\mathbb{R}^5\rightarrow\mathbb{R}$ is an almost periodic solution to (NLW) with $(u,u_t)\in L_t^\infty (\dot{H}_x^{3/2}\times\dot{H}_x^{1/2})$ and $\inf_{t\in\mathbb{R}} N(t)\geq 1$.  Fix $4<p\leq 12$.  Then there exists $C_u>0$ such that for every $0<S<T$ we have
\begin{align*}
\left\lVert \int_S^T \frac{\sin(-t|\nabla|)}{|\nabla|}F(u(t))dt\right\rVert_{L_x^p}&\leq C_uS^{\frac{4}{p}-1}.
\end{align*}
\end{lemma}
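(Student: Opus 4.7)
The plan is to combine the dispersive estimate (\ref{eq_dispersive}) with a fractional Leibniz decomposition of the nonlinearity. Set $\sigma := 2 - \tfrac{6}{p}$, which for $p \in (4,12]$ lies in $(\tfrac{1}{2},\tfrac{3}{2}]$. Minkowski's inequality in $t$ followed by the dispersive estimate applied pointwise to $f = F(u(t))$ yields
\[
\left\lVert \int_S^T \frac{\sin(-t|\nabla|)}{|\nabla|} F(u(t))\,dt \right\rVert_{L_x^p} \lesssim \int_S^T t^{\frac{4}{p}-2} \bigl\||\nabla|^\sigma F(u(t))\bigr\|_{L_x^{p'}}\,dt,
\]
so it remains to bound the nonlinear factor uniformly in $t$ and integrate in time. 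Since $p>4$ one has $\tfrac{4}{p}-2 < -1$, so $\int_S^T t^{4/p-2}\,dt$ is bounded by a constant multiple of $S^{4/p-1}$, which is exactly the claimed decay.

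The central issue is therefore to show $\bigl\||\nabla|^\sigma F(u(t))\bigr\|_{L_x^{p'}} \lesssim_u 1$ uniformly in $t$. Since $u$ is real-valued, $F(u) = u^3$, and the fractional Leibniz (Kato--Ponce) rule gives
\[
\bigl\||\nabla|^\sigma (u^3)\bigr\|_{L_x^{p'}} \lesssim \|u\|_{L_x^a}^{2}\,\bigl\||\nabla|^\sigma u\bigr\|_{L_x^b}
\]
for any admissible pair $(a,b)$ satisfying $\tfrac{1}{p'} = \tfrac{2}{a} + \tfrac{1}{b}$. I would choose $b = \tfrac{5p}{3p-6}$, which is precisely the Sobolev exponent for which $\dot H_x^{3/2} \hookrightarrow \dot W_x^{\sigma,b}$; the a priori bound then gives $\||\nabla|^\sigma u\|_{L_x^b} \lesssim_u 1$. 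This choice forces $a = \tfrac{10p}{2p+1}$, which varies monotonically over $(\tfrac{40}{9},\tfrac{24}{5}] \subset (3,5)$ as $p$ ranges over $(4,12]$. Sobolev embedding supplies $u \in L_t^\infty L_x^5$, and Proposition~\ref{prop2} applied with regularity index $\tfrac{3}{2}$ (using $\inf N(t) \geq 1$) gives $u \in L_t^\infty L_x^3$; interpolation between these two bounds yields $\|u\|_{L_t^\infty L_x^a} \lesssim_u 1$ throughout the relevant range, completing the uniform estimate.

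The main obstacle is the careful selection of the Leibniz exponents: the choice of $b$ is forced by the need to absorb $|\nabla|^\sigma u$ using only the critical regularity, which in turn pins down $a$ through the scaling relation, and this value of $a$ must fall inside the range where improved integrability is available. The argument closes precisely because $(\tfrac{40}{9},\tfrac{24}{5}]$ sits strictly inside $(3,5)$ for every $p \in (4,12]$, so that the $L^3$ integrability from Proposition~\ref{prop2} (together with the Sobolev $L^5$ bound) provides exactly what the decomposition demands. The two endpoint restrictions $p>4$ (needed for integrability of $t^{4/p-2}$ at infinity) and $p \leq 12$ (needed so that $\sigma = 2-\tfrac{6}{p} \leq \tfrac{3}{2}$ is controllable by $\dot H^{3/2}$) are both seen to be natural from this perspective.
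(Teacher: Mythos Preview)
Your proposal is correct and follows essentially the same route as the paper: Minkowski plus the dispersive estimate (\ref{eq_dispersive}), then a fractional Leibniz decomposition of $|\nabla|^{2-6/p}(u^3)$ with the identical exponents $a=\tfrac{10p}{2p+1}$ and $b=\tfrac{5p}{3(p-2)}$, controlling the $|\nabla|^\sigma u$ factor by the $\dot H^{3/2}$ bound and the $L^a$ factors by interpolation between $L^3$ (Proposition~\ref{prop2}) and $L^5$ (Sobolev). Your discussion of why the endpoint restrictions $4<p\le 12$ arise is a nice addition that the paper leaves implicit.
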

\begin{proof}
Let $0<S<T$ be given.  By an application of Minkowski's inequality followed by the dispersive estimate ($\ref{eq_dispersive}$), we write
\begin{align*}
\left\lVert \int_S^T \frac{\sin(-t|\nabla|)}{|\nabla|}F(u(t))dt\right\rVert_{L_x^p}&\lesssim \int_S^T\left\lVert \frac{\sin(-t|\nabla|)}{\nabla|}F(u(t))\right\rVert_{L_x^p} dt\\
&\lesssim \int_S^T {t}^{-2(1-\frac{2}{p})} \lVert |\nabla|^{2-\frac{6}{p}}F(u)\rVert_{L_t^\infty L_x^{p/(p-1)}}dt\\
&\lesssim \int_S^T (t)^{\frac{4}{p}-2} dt\\
&\lesssim S^{\frac{4}{p}-1}
\end{align*}
where we have used interpolation to obtain 
\begin{align*}
\lVert |\nabla|^{2-\frac{6}{p}}F(u)\rVert_{L_t^\infty L_x^{p/(p-1)}}&\lesssim \lVert u\rVert^2_{L_t^\infty L_x^{\frac{10p}{2p+1}}} \lVert |\nabla|^{2-\frac{6}{p}}u \rVert_{L_t^\infty L_x^{\frac{5p}{3(p-2)}}}\\
&\lesssim \lVert u \rVert_{L_t^\infty L_x^3}^{\frac{3}{2p}} \lVert u \rVert_{L_t^\infty L_x^5}^{\frac{4p-3}{2p}}\lVert u\rVert_{L_t^\infty\dot{H}_x^{3/2}}\\
&\lesssim_u 1,
\end{align*}
where we have noted that $u\in L_t^\infty L_x^3$ as a consequence of Proposition $\ref{prop2}$.
\end{proof}

With this lemma in hand, we now are now ready to prove the quantitative decay result on the $L_t^\infty L_x^5$ norm, Proposition $\ref{prop_quant}$.
\begin{proof}[Proof of Proposition $\ref{prop_quant}$.]
We argue as in the proof of Proposition $6.1$ in \cite{KVNLW3}, and begin by noting that by the space and time translation symmetries, we may assume without loss of generality that $t=0$ and $x(0)=0$.   Let $\epsilon>0$ be given, fix $\delta>0$ to be determined, and define 
\begin{align*}
f(x):=\int_{\delta R}^\infty \frac{\sin(-t'|\nabla|)}{|\nabla|}F(u(t'))dt',\quad 
g(x):=\int_0^{\delta R} \frac{\sin(-t'|\nabla|)}{|\nabla|}F(u(t',x))dt'.
\end{align*}

Note that by using the no-waste Duhamel formula, followed by H\"older's inequality, Young's inequality and Lemma $\ref{lemfive}$ with $p=12$, we obtain the bound
\begin{align}
&\nonumber \int_{|x|\geq R} |u(0,x)|^5 dx\\
\nonumber &\hspace{0.2in}=\int_{|x|\geq R} |f(x)+g(x)|^{\frac{12(5-w)}{12-w}} |u(0,x)|^{\frac{7w}{12-w}} dx\\
\nonumber &\hspace{0.2in}\lesssim \int_{|x|\geq R} |f(x)|^{\frac{12(5-w)}{12-w}} |u(0,x)|^{\frac{7w}{12-w}} + |g(x)|^{\frac{12(5-w)}{12-w}}|u(0,x)|^{\frac{7w}{12-w}}dx\\
\nonumber &\hspace{0.2in}\lesssim_u \lVert f\rVert_{L_x^{12}}^\frac{12(5-w)}{12-w}\lVert u(0)\rVert_{L_x^{w}}^{\frac{7w}{12-w}}\\
\nonumber &\hspace{0.4in}+\epsilon^{-\frac{5(12-w)}{12(5-w)}}\int_{|x|\geq R} |g(x)|^5dx+\epsilon^{\frac{5(12-w)}{7w}}\int_{|x|\geq R} |u(0,x)|^5dx\\
\nonumber &\hspace{0.2in}\lesssim_u (\delta R)^{-\frac{8(5-w)}{12-w}}\\
\nonumber &\hspace{0.4in}+\epsilon^{-\frac{5(12-w)}{12(5-w)}}\int_{|x|\geq R} |g(x)|^5dx+\epsilon^{\frac{5(12-w)}{7w}}\int_{|x|\geq R} |u(0,x)|^5dx
\end{align}
for $\epsilon>0$.  Choosing $\epsilon>0$ sufficiently small (depending on the implicit constant), this yields
\begin{align}
\int_{|x|\geq R} |u(0,x)|^5dx&\lesssim_u (\delta R)^{-\frac{8(5-w)}{12-w}}+\lVert g\rVert_{L_x^5(|x|\geq R)}^5.\label{eqqa}
\end{align}

On the other hand, from the finite speed of propagation for the linear wave equation followed by the Sobolev embedding, we obtain
\begin{align}
\nonumber \lVert g\rVert_{L_x^5(|x|\geq R)}&\lesssim \left\lVert \int_0^{\delta R} \frac{\sin(-t'|\nabla|)}{|\nabla|}F(u(t'))dt'\right\rVert_{L_x^{5}(|x|\geq R)}\\
\nonumber &\lesssim \left\lVert \int_0^{\delta R} \frac{\sin(t'|\nabla|)}{|\nabla|}[F(u(t')\chi_{\Omega}(x))]dt'\right\rVert_{L_x^5}\\
&\lesssim \left\lVert \int_0^{\delta R} \frac{\sin(t'|\nabla|)}{|\nabla|}[F(u(t')\chi_{\Omega}(x))]dt'\right\rVert_{\dot{H}_x^{3/2}}\label{eq_gbound}
\end{align}
for any $\Omega\subset \mathbb{R}^5$ with $\{x:|x|\geq R-\delta R\}\subset \Omega$, where $\chi_\Omega$ is a smooth approximation to the characteristic function of $\Omega$.  

We now recall that since $x(0)=0$, the bound ($\ref{fsprop_xt}$) gives the existence of $C>0$ such that
\begin{align}
|x(t)|\leq |t|+C\leq \delta R+C\label{eqabcdef1}
\end{align}
for $t\in [0,\delta R]$.  We may therefore take $\Omega=\{x:|x-x(t)|\geq R-2\delta R-C\}$.  

Substituting ($\ref{eqabcdef1}$) into ($\ref{eq_gbound}$) and applying the inhomogeneous Strichartz estimate (backwards in time, applied to the solution of (NLW) with Cauchy data $(0,0)$ at $t_0=\delta R$), we obtain
\begin{align}
\nonumber (\ref{eq_gbound})&\lesssim \lVert |\nabla|^{5/4} F(u(t)\chi_{\Omega}(x))\rVert_{L_t^{2}([0,\delta R];L_x^{4/3})}\\
\nonumber &\lesssim \lVert u(t)\chi_{\Omega}(x)\rVert_{L_t^\infty([0,\delta R];L^{5}_x)}\lVert u(t)\chi_{\Omega}(x)\rVert_{L_t^{2}L_x^{10}}\lVert |\nabla|^{5/4} u(t)\chi_{\Omega}(x)\rVert_{L_t^\infty L_x^{20/9}}\\
\label{eq123123123}&\lesssim B(R/2)^{1/5}\lVert u(t)\chi_{\Omega}(x)\rVert_{L_t^{2}L_x^{10}}\lVert u(t)\chi_{\Omega}(x)\rVert_{L_t^\infty \dot{H}_x^{3/2}},
\end{align}
provided that $R$ is sufficiently large and $\delta$ sufficiently small, where
\begin{align*}
B(R):=\sup_{t\in\mathbb{R}}\int_{|x-x(t)|\geq R} |u(t,x)|^5dx.
\end{align*}

Fix $\eta>0$ to be determined.  Using the finite speed of propagation, we may now choose $R_0=R_0(\eta,u)>0$ such that if $\Omega\subset \{x:|x|\geq R_0\}$, then
\begin{align*}
\lVert u(t)\chi_{\Omega}\rVert_{L_t^2L_x^{10}}+\lVert u(t)\chi_{\Omega}\rVert_{L_t^\infty \dot{H}_x^{3/2}}<\eta,
\end{align*}
which in conjunction with ($\ref{eq123123123}$) gives
\begin{align}
\lVert g\rVert_{L_x^5(|x|\geq R)}&\lesssim \eta^2B(R/2)^{1/5}\label{eq_gbound1}
\end{align}
for $R\geq R_0$.

Combining ($\ref{eqqa}$) with ($\ref{eq_gbound1}$), we obtain
\begin{align*}
\int_{|x|\geq R}|u(0,x)|^5dx\lesssim_u (\delta R)^{-\frac{8(5-w)}{12-w}}+\eta^{10}B(R/2).
\end{align*}
Applying the space and time translation symmetries, 
\begin{align*}
B(R)\leq C'_u(\delta R)^{-\frac{8(5-w)}{12-w}}+\eta^{10}C'_uB(R/2)
\end{align*}
for each $\eta>0$ and every $R\geq R_0(\eta,u)$.

On the other hand, by the a priori bound $(u,u_t)\in L_t^\infty(\dot{H}_x^{3/2}\times\dot{H}_x^{1/2})$ we have $B(R)\lesssim 1$ for all $R<R_0(\eta,u)$.  Invoking an induction argument and choosing $\eta>0$ sufficiently small (to prevent the constants from blowing up in the induction), we obtain
\begin{align*}
B(R)\leq C_u R^{-\frac{8(5-w)}{12-w}}
\end{align*}
for all $R$ as desired.
\end{proof}
\section{Bounds of energy-flux type and subluminality}
\label{sec_eflux}

In this section, our main goal is to establish an improvement upon the classical finite speed of propagation for (NLW) known as {\it subluminality}, which states that for the soliton-like and low-to-high frequency cascade solutions, the centering function $x(t)$ in the definition of almost periodicity travels strictly slower than the propagation speed of the equation; this is the content of Theorem $\ref{thm_sublum}$.  The main tools we will use to establish this property are a frequency localized form of the energy-flux bound, which will be established in Lemma $\ref{lem2new}$, and a frequency localized version of the concentration of potential energy, Lemma $\ref{lem3}$.

We begin by recalling a form of the standard energy flux bound in the $\mathbb{R}^5$ setting (see, for instance \cite{KVNLW3}, \cite{TaoBook}).
\begin{lemma}[Energy-flux inequality] \label{lem_eflux}   For every solution $u$ to (NLW) with $(u,u_t)\in L_t^\infty(I;\dot{H}_x^{3/2}\times\dot{H}_x^{1/2})$, there exists $C_u>0$ such that we have
\begin{align}
\label{eq_eflux}
\int_{I_0}\int_{|x-y|=|t|} |u(t,x)|^{4}dS(x)dt\leq C_u\sup_{t\in I_0} |t|.
\end{align}
for every $I_0\subset I$ and $y\in\mathbb{R}^5$.
\end{lemma}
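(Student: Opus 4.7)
The plan is to deduce the claim from the classical energy-flux identity applied to a truncated light cone, combined with a local-in-space energy bound coming from the critical Sobolev regularity of $u$. By spatial translation invariance and time-reflection symmetry of (NLW), and by splitting $I_0$ into its positive and negative parts, it suffices to treat the case $y = 0$ and $I_0 \subset [0, T]$ with $T = \sup_{t \in I_0} t$; the backward cone emanating from $(0, 0)$ handles the negative half of $I_0$ identically via $\tilde{u}(t, x) = u(-t, x)$.

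The first step is the local energy bound
\begin{align*}
\int_{B(0, r)} \Bigl( \tfrac{1}{2}|u_t|^2 + \tfrac{1}{2}|\nabla u|^2 + \tfrac{1}{4}|u|^4 \Bigr)(t, x) \, dx \lesssim_u r
\end{align*}
valid uniformly in $t \in I$ and $r > 0$. This is a direct consequence of H\"older's inequality on $B(0, r)$ combined with the Sobolev embeddings $\dot{H}_x^{3/2}(\mathbb{R}^5) \hookrightarrow L_x^5 \cap \dot{W}_x^{1, 5/2}$ and $\dot{H}_x^{1/2}(\mathbb{R}^5) \hookrightarrow L_x^{5/2}$, together with the a priori hypothesis; in particular, the local energy is finite at every time even though the global energy need not be.

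Next I apply the pointwise spacetime conservation law
\begin{align*}
\partial_t e(u) - \nabla \cdot \bigl( u_t \nabla u \bigr) = 0, \qquad e(u) = \tfrac{1}{2}|u_t|^2 + \tfrac{1}{2}|\nabla u|^2 + \tfrac{1}{4}|u|^4,
\end{align*}
obtained by multiplying (NLW) by $u_t$. Integrating this identity over the forward truncated cone $K_T = \{(t, x) : 0 \leq t \leq T, \, |x| \leq t\}$ with apex $(0, 0)$ and invoking the divergence theorem, with a completion-of-squares rearrangement on the lateral boundary $M_T = \{(t, x) : 0 \leq t \leq T, \, |x| = t\}$, produces
\begin{align*}
\int_{B(0, T)} e(u)(T, x) \, dx = \frac{1}{\sqrt{2}} \int_{M_T} \Bigl[ \tfrac{1}{2}\bigl(u_t + \omega \cdot \nabla u\bigr)^2 + \tfrac{1}{2} \bigl|\nabla^\perp u\bigr|^2 + \tfrac{1}{4}|u|^4 \Bigr] \, dS,
\end{align*}
where $\omega = x/|x|$ and $\nabla^\perp$ denotes the component of $\nabla$ tangent to the sphere. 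The right-hand integrand is manifestly nonnegative and dominates $\tfrac{1}{4}|u|^4$, so the local energy bound from the previous step yields $\int_{M_T} |u|^4 \, dS \lesssim_u T$. The parametrization $(t, \omega) \in [0, T] \times S^4 \mapsto (t, t\omega) \in M_T$ shows the induced surface measure on $M_T$ equals $\sqrt{2} \, dS(x) \, dt$, and the claim follows.

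The main obstacle is justifying the divergence identity rigorously, since $u$ is only critically regular. The standard remedy is a mollification argument: approximate $(u_0, u_1)$ by smooth data, invoke local well-posedness together with persistence of regularity (whose lifespan is controlled by $\lVert u \rVert_{L_{t,x}^6}$, which is finite on compact subintervals of $I$ by the definition of strong solution) to produce smooth approximating solutions on $[0, T]$, apply the flux identity to each approximation, and pass to the limit using continuity of the local energy under $\dot{H}_x^{3/2} \times \dot{H}_x^{1/2}$ convergence together with Fatou's lemma for the nonnegative flux integrand.
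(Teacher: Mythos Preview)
Your argument is correct and is the standard derivation of the energy-flux inequality: localize the energy to a ball of radius $r$, observe via H\"older and the Sobolev embeddings $\dot H^{3/2}\hookrightarrow L^5\cap \dot W^{1,5/2}$, $\dot H^{1/2}\hookrightarrow L^{5/2}$ that this localized energy is $O_u(r)$, and then read off the flux bound from the divergence identity on the truncated forward cone.

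The paper does not actually prove this lemma; it is stated as a recall with references to \cite{KVNLW3} and \cite{TaoBook}. However, your approach is exactly the one the paper uses immediately afterward for the frequency-localized version (Lemma~\ref{lem1}): there the authors define the localized energy $\mathcal{E}(t)$ on the region $\{|x-y|\le |t|\}$, differentiate in $t$, integrate by parts, drop the nonnegative boundary term $\tfrac12|u_t|^2+\tfrac12|\nabla u|^2+\nabla u\cdot\omega\, u_t$ on the lateral surface, and bound $\sup_t|\mathcal E(t)|$ using the critical regularity. Your proof is the $N=0$ case of that computation, with the completion of squares made explicit rather than discarded, so the two are essentially identical.
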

The subluminality property obtained in \cite{KVNLW3} makes essential use of the fact that the right hand side of the energy-flux inequality grows sublinearly in $|t|$ in dimension $d=3$, which is no longer the case in our setting.  To overcome this, we localize the estimate in frequency.

In the frequency localized setting, the analogue of the energy flux bound must take into account the fact that $u_{\geq N}$ satisfies 
\begin{align}
\partial_{tt}u_{\geq N}-\Delta u_{\geq N}+P_{\geq N}[u^3]=0,\label{floc_eqn}
\end{align}
in place of (NLW).  In view of this, our frequency localized form of the energy-flux bound is the following:
\begin{lemma}[Frequency localized energy-flux bound]
\label{lem2new}
Suppose that $u:\mathbb{R}\times\mathbb{R}^5\rightarrow\mathbb{R}$ is an almost periodic solution to (NLW) with $(u,u_t)\in L_t^\infty(\mathbb{R};\dot{H}_x^{3/2}\times\dot{H}_x^{1/2})$ and $\inf N(t)\geq 1$.  Then there exists $C_u>0$ such that for every $\eta\in (0,1)$ there exists $N_0=N_0(\eta)$ such that for every dyadic $N<N_0$, $x\in\mathbb{R}^5$ and compact $I_0\subset\mathbb{R}$, we have
\begin{align*}
\int_{I_0}\int_{|x-y|=|t|} |u_{\geq N}(t,y)|^4dS(y)dt\leq C_u(N^{-1}+\eta |I_0|).
\end{align*}
\end{lemma}
\begin{proof}
In the following argument, we assume that $u$ is smooth with compact support (so that integration by parts is justified); we note that this assumption can be removed by standard approximation arguments.  We work on the interval $I_0^+=I_0\cap [0,\infty)$ and note that the proof for $I_0\cap (-\infty,0]$ is similar.  Fix $N>0$ and define
\begin{align*}
\mathcal{E}(t)&=\int_{|x-y|\leq t} \frac{1}{2}|\partial_t u_{\geq N}(t,x)|^2+\frac{1}{2}|\nabla u_{\geq N}(t,x)|^2+\frac{1}{4}|u_{\geq N}(t,x)|^4dx
\end{align*}
for $t\in I_0^+$.  Differentiation in time followed by integration by parts gives
\begin{align*}
\frac{d\mathcal{E}}{dt}(t)&=\int_{|x-y|=t} \bigg[\frac{1}{2}|\partial_t u_{\geq N}(t,x)|^2+\frac{1}{2}|\nabla u_{\geq N}(t,x)|^2+\frac{1}{4}|u_{\geq N}(t,x)|^4\\
&\hspace{0.4in}+\nabla u_{\geq N}(t,x)\partial_t u_{\geq N}(t,x)\cdot \frac{x-y}{|x-y|}\bigg]dS(x)\\
&\hspace{0.2in}+\int_{|x-y|\leq t} \bigg[ \partial_t u_{\geq N}(t,x)\partial_{tt}u_{\geq N}(t,x)+[-\Delta u_{\geq N}(t,x)]\partial_t u_{\geq N}(t,x)\\
&\hspace{0.4in}+u_{\geq N}(t,x)^3\partial_t u_{\geq N}(t,x)\bigg]dx.
\end{align*}
Using ($\ref{floc_eqn}$) we get
\begin{align}
\nonumber \frac{d\mathcal{E}}{dt}(t)&=\int_{|x-y|=t} \bigg[\frac{1}{2}|\partial_t u_{\geq N}(t,x)|^2+\frac{1}{2}|\nabla u_{\geq N}(t,x)|^2+\frac{1}{4}|u_{\geq N}(t,x)|^4\\
\nonumber &\hspace{0.4in}+\nabla u_{\geq N}(t,x)\partial_t u_{\geq N}(t,x)\cdot \frac{x-y}{|x-y|}\bigg]dS(x)\\
\nonumber &\hspace{0.4in}+\int_{|x-y|\leq t} \bigg[ \bigg(u_{\geq N}(t,x)^3-P_{\geq N}[u(t,x)^3]\bigg)\partial_t u_{\geq N}(t,x)\bigg]dx\\
\nonumber &\geq \frac{1}{4}\int_{|x-y|=t} |u_{\geq N}(t,x)|^4dx\\
\label{eq1c}&\hspace{0.4in}-\bigg|\int_{|x-y|\leq t} \bigg[ \bigg(u_{\geq N}(t,x)^3-P_{\geq N}[u(t,x)^3]\bigg)\partial_t u_{\geq N}(t,x)\bigg]dx\bigg|.
\end{align}

Our next goal is to establish the bound
\begin{align}
\lVert (u_{\geq N}(t,x)^3-P_{\geq N}[u(t,x)^3])\,\partial_t u_{\geq N}(t,x)\rVert_{L_t^\infty L_x^1}&\lesssim_u \eta.\label{eq1a}
\end{align}
on the second term in ($\ref{eq1c}$), for $N$ sufficiently small.

To obtain ($\ref{eq1a}$), note that using H\"older and the Sobolev embedding gives
\begin{align}
\nonumber &\lVert (u_{\geq N}(t,x)^3-P_{\geq N}[u(t,x)^3])\,\partial_t u_{\geq N}(t,x)\rVert_{L_t^\infty L_x^1}\\
\nonumber &\hspace{0.2in}\lesssim \lVert u_{\geq N}(t,x)^3-P_{\geq N}[u(t,x)^3]\rVert_{L_t^\infty L_x^{5/3}}\lVert \partial_t u_{\geq N}(t,x)\rVert_{L_t^\infty \dot{H}_x^{1/2}}\\
&\hspace{0.2in}\lesssim_u \lVert u_{\geq N}(t,x)^3-P_{\geq N}[u_{\geq N}^3]\rVert_{L_t^\infty L_x^{5/3}}+\lVert u_{<N}\rVert_{L_x^5}\lVert u\rVert_{L_t^\infty L_x^5}^2\label{eq1b}
\end{align}
where we have used the decomposition $u=u_{<N}+u_{\geq N}$.

Now, using the almost periodicity and $\inf N(t)\geq 1$, we may choose $N$ and $N_1$ small enough so that 
\begin{align*}
N<\eta^{3/2} N_1\quad \textrm{and}\quad \lVert (u_{\leq N_1},\partial_t u_{\leq N_1})\rVert_{L_t^\infty(\dot{H}_x^{3/2}\times\dot{H}_x^{1/2})}<\eta
\end{align*}
are satisfied.  Then, writing $u_{\geq N}^3-P_{\geq N}[u_{\geq N}^3]$ as $P_{<N}[u_{\geq N}^3]$, we use the Bernstein and H\"older inequalities and the decomposition $u_{\geq N}=u_{N\leq \cdot<N_1}+u_{\geq N_1}$, followed by another instance of Bernstein's inequality and the Sobolev embedding to obtain 
\begin{align*}
(\ref{eq1b})&\lesssim_u \lVert P_{<N}[u_{\geq N}^3]\rVert_{L_t^\infty L_x^{5/3}}+\eta\\
&\lesssim_u N^{2/3}\lVert u_{\geq N}^3\rVert_{L_t^\infty L_x^{15/11}}+\eta\\
&\lesssim_u N^{2/3}\lVert u_{\geq N}\rVert_{L_t^\infty L_x^5}^2\lVert u_{\geq N}\rVert_{L_t^\infty L_x^3}+\eta\\
&\lesssim_u N^{2/3}(\lVert u_{N\leq\cdot <N_1}\rVert_{L_t^\infty L_x^{3}}+\lVert u_{\geq N_1}\rVert_{L_t^\infty L_x^3})+\eta\\
&\lesssim_u N^{2/3}(N^{-2/3}\lVert |\nabla|^{2/3}u_{N\leq\cdot <N_1}\rVert_{L_t^\infty L_x^{3}}+N_1^{-2/3}\lVert |\nabla|^{2/3}u_{\geq N_1}\rVert_{L_t^\infty L_x^3})+\eta\\
&\lesssim_u \lVert u_{N\leq\cdot <N_1}\rVert_{L_t^\infty \dot{H}_x^{3/2}}+\left(\frac{N}{N_1}\right)^{2/3}\lVert u_{\geq N_1}\rVert_{L_t^\infty \dot{H}_x^{3/2}}+\eta\\
&\lesssim_u \eta.
\end{align*}
This completes the proof of the inequality ($\ref{eq1a}$).

Combining $(\ref{eq1c}$) with ($\ref{eq1a}$), integrating over $t\in I_0$, and invoking the Fundamental Theorem of Calculus, we obtain
\begin{align*}
&\int_{I_0^+}\int_{|x-y|=t} |u_{\geq N}(t,x)|^4dxdt\\
&\hspace{0.2in}\lesssim \sup_{t\in I_0} |\mathcal{E}(t)|+\int_{I_0^+}\int_{|x-y|\leq |t|} \bigg|\bigg(u_{\geq N}(t,x)^3-P_{\geq N}[u(t,x)^3]\bigg)\partial_t u_{\geq N}(t,x)\bigg|dxdt\\
&\hspace{0.2in}\lesssim_u \sup_{t\in I_0} |\mathcal{E}(t)|+\eta |I_0^+|
\end{align*}
provided that $N$ is sufficiently small.  To bound the right hand side, we use the Bernstein inequalities, followed by H\"older and the Sobolev embedding to obtain
\begin{align*}
\mathcal{E}(t)&\lesssim N^{-1}\lVert (u,\partial_t u)\rVert^2_{L_t^\infty(\dot{H}_x^{3/2}\times\dot{H}_x^{1/2})}+N^{-1}\lVert P_{\geq N}[u(t,x)^3]\rVert_{L_x^{5/3}}\lVert \nabla u_{\geq N}\rVert_{L_x^{5/2}}\\
&\lesssim_u N^{-1}.
\end{align*}
for every $t\in I_0$, which gives the desired estimate, completing the proof of Lemma $\ref{lem2new}$.
\end{proof}

In order to invoke the frequency localized energy flux bound, Lemma $\ref{lem2new}$, in the argument for subluminality, we must adapt the bound on concentration of potential energy to account for the frequency localization.

Towards this end, in the remainder of this section, we let $u:\mathbb{R}\times\mathbb{R}^5\rightarrow\mathbb{R}$ be an almost periodic solution to (NLW) satisfying $(u,u_t)\in L_t^\infty(\mathbb{R};\dot{H}_x^{3/2}\times\dot{H}_x^{1/2})$ such that $\inf N(t)\geq 1$.

Recall that (see, e.g. \cite{KVNLW3}, \cite{BulutCubic}) there exists $C_u>0$ such that for every $k\geq 1$,
\begin{align}
\int_{I_0}\int_{|x-x(t)|\leq C/N(t)} |u(t,x)|^4dxdt\geq C_u\int_{I_0} N(t)^{-1}dt.\label{eq2aa}
\end{align}

As we noted earlier, we will need a frequency localized form of this bound.  More precisely,
\begin{lemma}[Frequency localized potential energy concentration]
\label{lem3}
Let $u:\mathbb{R}\times\mathbb{R}^5\rightarrow\mathbb{R}$ be an almost periodic solution to (NLW) satisfying $(u,u_t)\in L_t^\infty(\mathbb{R};\dot{H}_x^{3/2}\times\dot{H}_x^{1/2})$ such that $\inf N(t)\geq 1$.  Then there exists $N_0>0$ and $C_u>0$ such that for all $N\leq N_0$ and $k\geq 1$,
\begin{align}
\int_{I_0}\int_{|x-x(t)|\leq C/N(t)} |u_{\geq 8N}(t,x)|^4dxdt\geq C_u\int_{I_0} N(t)^{-1}dt.\label{eqlem3goal1}
\end{align}
\end{lemma}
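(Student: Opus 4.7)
The plan is to deduce Lemma~\ref{lem3} from the unlocalized potential energy concentration bound (\ref{eq2a}) by showing that the low-frequency component $u_{<8N}$ contributes a vanishing fraction of the right-hand side as $N\to 0$. Specifically, I would split $u=u_{<8N}+u_{\geq 8N}$, use the pointwise inequality $|u|^{4}\leq 8|u_{\geq 8N}|^{4}+8|u_{<8N}|^{4}$, integrate over $\{|x-x(t)|\leq C/N(t)\}\times I_0$, and apply (\ref{eq2a}) on the left. This produces
\begin{align*}
\int_{I_0}\!\int_{|x-x(t)|\leq C/N(t)}\!|u_{\geq 8N}|^{4}dx\,dt\geq \tfrac{C_u}{8}\!\int_{I_0}\!N(t)^{-1}dt-\int_{I_0}\!\int_{|x-x(t)|\leq C/N(t)}\!|u_{<8N}|^{4}dx\,dt,
\end{align*}
so the remaining task is to bound the error term by a small multiple of $\int_{I_0}N(t)^{-1}dt$.

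To estimate the low-frequency error, I would apply H\"older's inequality on the ball $B(x(t),C/N(t))$, whose volume is comparable to $N(t)^{-5}$, together with the Sobolev embedding $\dot H_x^{3/2}(\mathbb{R}^{5})\hookrightarrow L_x^{5}(\mathbb{R}^{5})$:
\begin{align*}
\int_{|x-x(t)|\leq C/N(t)}|u_{<8N}(t,x)|^{4}dx\lesssim N(t)^{-1}\lVert u_{<8N}(t)\rVert_{L_x^{5}}^{4}\lesssim N(t)^{-1}\lVert |\nabla|^{3/2}u_{<8N}(t)\rVert_{L_x^{2}}^{4}.
\end{align*}
The decisive input is then the uniform low-frequency tightness: for every $\eta>0$ there exists $N_0=N_0(\eta,u)>0$ with $\lVert|\nabla|^{3/2}u_{<8N}\rVert_{L_t^\infty L_x^{2}}<\eta$ for all $N\leq N_0$. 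This is the same fact extracted at the outset of the proof of Proposition~\ref{prop2}: the almost periodicity of $u$ combined with $\inf_t N(t)\geq 1$ forces the low frequencies of $u$ to be uniformly small, since the rescaled orbit is precompact in $\dot H_x^{3/2}\times\dot H_x^{1/2}$ and $\delta N(t)\geq \delta$ whenever $N(t)\geq 1$, so the frequency cutoff $|\xi|\lesssim N$ sits below the scale-adapted cutoff $|\xi|\lesssim \delta N(t)$ once $N$ is sufficiently small.

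Integrating the H\"older bound in time yields an error estimate of size $C\eta^{4}\int_{I_0}N(t)^{-1}dt$; choosing $\eta$ small enough that $C\eta^{4}\leq C_u/16$, the error is absorbed into $\tfrac{C_u}{8}\int_{I_0}N(t)^{-1}dt$ and (\ref{eqlem3goal1}) follows with constant $C_u/16$ for all $N\leq N_0$. I do not foresee a real obstacle beyond routine bookkeeping: the argument is a straightforward absorption once (\ref{eq2a}) and the low-frequency tightness are in hand. The only mild subtlety is that the Sobolev step relies on the sharp embedding $\dot H_x^{3/2}\hookrightarrow L_x^{5}$ in $\mathbb{R}^{5}$, so that the volume of the ball contributes exactly the factor of $N(t)^{-1}$ that matches the scaling of the main term on the right-hand side of (\ref{eq2a}); any looser exponent would fail to close the absorption.
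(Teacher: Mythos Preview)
Your proposal is correct and follows essentially the same approach as the paper: both arguments split off the low-frequency piece $u_{<8N}$, estimate its contribution on the ball $|x-x(t)|\leq C/N(t)$ via H\"older (gaining $N(t)^{-1}$) and the Sobolev embedding $\dot H_x^{3/2}\hookrightarrow L_x^5$, and then absorb it using the low-frequency tightness $(\ref{eqrem1})$. The only cosmetic difference is that the paper uses the reverse triangle inequality in $L^4$ (at the $1/4$-power level) to isolate $u_{\geq 8N}$, whereas you use the pointwise bound $|u|^4\leq 8|u_{\geq 8N}|^4+8|u_{<8N}|^4$; the resulting absorption argument and choice of $N_0$ are identical in substance.
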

Before proceeding with the proof, we recall a consequence of almost periodicity which will aid in estimating the error terms resulting from the frequency localization of the potential energy.

More precisely, let $u$ be a solution given as in Lemma $\ref{lem3}$.  Then, by the definition of almost periodicity along with the property $\inf_{t\in [0,\infty)} N(t)\geq 1$, we have
\begin{align}
\lim_{N\rightarrow 0} \left[\lVert u_{\leq N}\rVert_{L_t^\infty\dot{H}_x^{3/2}}+\lVert P_{\leq N}u_t\rVert_{L_t^\infty\dot{H}_x^{1/2}}\right]=0.\label{eqrem1}
\end{align}

\begin{proof}[Proof of Lemma $\ref{lem3}$.]
Fix $N>0$.  Applying ($\ref{eq2aa}$) to $u_{\geq {8N}}$, we obtain
\begin{align}
\nonumber &\left[\int_{I_0}\int_{|x-x(t)|\leq C/N(t)} |u_{\geq 8N}(t,x)|^4dxdt\right]^{1/4}\\
\nonumber &\hspace{0.2in}\geq \left[\int_{I_0}\int_{|x-x(t)|\leq C/N(t)} |u(t,x)|^4dxdt\right]^{1/4}-\left[\int_{I_0}\int_{|x-x(t)|\leq C/N(t)} |u_{<8N}(t,x)|^4dxdt\right]^{1/4}\\
&\hspace{0.2in}\gtrsim_u C_u\int_{I_0} N(t)^{-1}dt-\left[\int_{I_0}\int_{|x-x(t)|\leq C/N(t)} |u_{<8N}(t,x)|^4dxdt\right]^{1/4}.\label{eq3}
\end{align}

It therefore suffices to estimate the space-time norm appearing in ($\ref{eq3}$).  For this, we fix $\eta\in(0,1)$ to be determined later in the argument, and use H\"older's inequality followed by the decomposition $u=u_{<8N}+u_{\geq 8N}$ to obtain
\begin{align}
\nonumber \int_{I_0}\int_{|x-x(t)|\leq C/N(t)} |u_{<8N}(t,x)|^4dxdt&\leq \int_{I_0}\frac{C}{N(t)}\lVert u_{<8N}(t,x)\rVert_{L_x^{5}}^4dt\\
&\lesssim_u \eta \int_{I_0} N(t)^{-1}dt,\label{eq4}
\end{align}
where we have used ($\ref{eqrem1}$) to obtain the second inequality.

Substituting ($\ref{eq4}$) into ($\ref{eq3}$), we then have,
\begin{align*}
\int_{I_0}\int_{|x-x(t)|\leq C/N(t)} |u_{\geq 8N}(t,x)|^4dxdt&\geq (C_u-\eta C'_u)\int_{I_0} N(t)^{-1}dt.
\end{align*}
Choosing $\eta$ small enough so that $\eta C'_u<C_u/2$ yields the bound ($\ref{eqlem3goal1}$).
\end{proof}

Having obtained the frequency localized energy flux bound and potential energy concentration we now turn to the proof of the subluminality property.  For this purpose, we recall the following lemma from \cite{KVNLW3}, which states that the function $x(t)$ can be chosen to have a certain centering property and establishes a first relationship between the speed of $x(t)$ and the frequency scale function $N(t)$.
\begin{lemma}\cite{KVNLW3}
\label{lem2a}
Suppose that $u:\mathbb{R}\times\mathbb{R}^5\rightarrow\mathbb{R}$ is an almost periodic solution to (NLW) satisfying $(u,u_t)\in L_t^\infty(\mathbb{R};\dot{H}_x^{3/2}\times\dot{H}_x^{1/2})$ such that $\inf N(t)\geq 1$.  

Then there exists a constant $C_u>0$ such that $x(t)$ can be chosen to satisfy
\begin{align*}
\inf_{\omega\in S^4} \int_{\omega\cdot (x-x(t))>0} ||\nabla|^{3/2}u(t,x)|^2+||\nabla|^{1/2}u_t(t,x)|^2dx\geq \frac{1}{C_u}.
\end{align*}

Moreover, there exists $c=c_u\in (0,1)$ such that for all $\tau_1,\tau_2\in\mathbb{R}$ satisfying $N(\tau_1)\leq N(\tau_2)$, we have 
\begin{align*}
|x(\tau_1)-x(\tau_2)|\geq |\tau_1-\tau_2|-cN(\tau_1)^{-1}\quad\Rightarrow\quad N(\tau_1)\geq c^2N(\tau_2).
\end{align*}
\end{lemma}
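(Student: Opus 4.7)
The plan is to treat the two parts of Lemma~\ref{lem2a} separately, with almost periodicity modulo symmetries serving as the central tool in both.

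For part (i), the idea is to select $x(t)$ as a suitable ``center'' for the critical density $\rho(t,x):=||\nabla|^{3/2}u(t,x)|^2+||\nabla|^{1/2}u_t(t,x)|^2$. For any single nontrivial pair $(\phi_0,\phi_1)\in\dot H_x^{3/2}\times\dot H_x^{1/2}$, the functional
\[
F(\phi_0,\phi_1):=\sup_{y\in\mathbb{R}^5}\,\inf_{\omega\in S^4}\int_{\omega\cdot(x-y)>0}\bigl(||\nabla|^{3/2}\phi_0|^2+||\nabla|^{1/2}\phi_1|^2\bigr)\,dx
\]
is strictly positive; a near-optimizer $y$ exists since the half-space integral tends to $0$ as $y\cdot\omega\to+\infty$ and to the full $\dot H_x^{3/2}\times\dot H_x^{1/2}$ norm squared as $y\cdot\omega\to-\infty$. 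Invoking the compactness of the rescaled orbit $K$ from (\ref{cptness}) together with continuity of $F$ yields a uniform lower bound $F\geq 1/C_u>0$ along the orbit. One then redefines $x(t)$ to be a near-maximizer at each $t$; since the correction is bounded by $O(1/N(t))$, this is compatible with almost periodicity (after possibly adjusting the compactness modulus $C(\eta)$), and the desired half-space bound holds in every direction.

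For part (ii), I would argue by contradiction: assume $N(\tau_1)\leq N(\tau_2)$ and $|x(\tau_1)-x(\tau_2)|\geq|\tau_1-\tau_2|-cN(\tau_1)^{-1}$, yet $N(\tau_1)<c^2N(\tau_2)$. Setting $\omega=(x(\tau_1)-x(\tau_2))/|x(\tau_1)-x(\tau_2)|$, the centering property at $\tau_1$ in direction $\omega$, combined with physical-space almost periodicity (to absorb a boundary strip of width $C_\eta/N(\tau_1)$), gives a lower bound $1/C_u-O(\eta)$ for the mass of $\rho(\tau_1,\cdot)$ on $\{\omega\cdot(x-x(\tau_1))>C_\eta/N(\tau_1)\}$. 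Finite speed of propagation then transports this mass to time $\tau_2$ into the $|\tau_2-\tau_1|$-neighborhood of that half-space, which, using the identity $\omega\cdot(x-x(\tau_1))=\omega\cdot(x-x(\tau_2))-|x(\tau_1)-x(\tau_2)|$ and the lower bound on $|x(\tau_1)-x(\tau_2)|$, is contained in $\{\omega\cdot(x-x(\tau_2))>(C_\eta-c)/N(\tau_1)\}$. On the other hand, the scale disparity $1/N(\tau_2)<c^2/N(\tau_1)$ together with almost periodicity at $\tau_2$ confines all but $\eta$ of $\rho(\tau_2,\cdot)$ to $B(x(\tau_2),C'_\eta/N(\tau_2))$, which for $c$ small is disjoint from the shifted half-space. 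Choosing $\eta\ll 1/C_u$ yields the desired contradiction.

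The main obstacle is the nonlocality of the fractional derivatives in $\rho$: finite speed of propagation applies directly to $u$ and $u_t$, but not to $|\nabla|^{3/2}u$ or $|\nabla|^{1/2}u_t$. I would resolve this through a Littlewood--Paley decomposition of $u$: contributions at frequencies $\ll N(\tau_1)$ and $\gg N(\tau_1)$ are $O(\eta)$ by frequency-localized almost periodicity, while the intermediate pieces inherit essentially sharp physical finite speed of propagation, with the unavoidable nonlocal spread introduced by the Littlewood--Paley projections controlled via the mismatch estimates of Lemma~\ref{lem_mismatch}. A minor additional point in part (i) -- that a barycenter-type choice of $x(t)$ stays compatible with the almost periodic framework -- is handled by the observation that any two admissible centers differ by $O(1/N(t))$, again by compactness of the orbit.
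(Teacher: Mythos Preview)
Your approach to part (i) --- choosing $x(t)$ as a near-optimal center via compactness of the orbit $K$ --- is sound and matches the argument in \cite{KVNLW3} (Proposition~4.1 there).

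For part (ii), however, there is a genuine gap. The paper explicitly indicates that the proof rests on ``the small data theory and the finite speed of propagation,'' and your outline omits the former entirely. The problematic step is ``Finite speed of propagation then transports this mass to time $\tau_2$.'' Finite speed of propagation is a statement about the \emph{support} of a solution with spatially localized Cauchy data; it does not assert that the $\dot H^{3/2}\times\dot H^{1/2}$ density of the fixed solution $u$ sitting in a half-space at $\tau_1$ must reappear in any particular region at $\tau_2$. Your Littlewood--Paley workaround does not repair this: the pieces $P_N u$ do not solve (NLW), so no finite-speed statement is available for them along the nonlinear flow, and the mismatch estimates of Lemma~\ref{lem_mismatch} are fixed-time bounds with no dynamical content. (There is also a secondary issue in your step absorbing the boundary strip: almost periodicity controls the density outside a \emph{ball} about $x(\tau_1)$, not in a slab adjacent to the half-space boundary.)

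The argument in \cite{KVNLW3} instead truncates the Cauchy data at time $\tau_2$ to the small ball $B(x(\tau_2),C_\eta/N(\tau_2))$ and solves (NLW) with this truncated data to produce an auxiliary solution $w$. Almost periodicity makes the discarded piece small in $\dot H^{3/2}\times\dot H^{1/2}$, so by the small data global theory and stability, $u-w$ stays $O(\eta)$ at \emph{all} times, in particular at $\tau_1$. Finite speed of propagation now applies legitimately to the genuine solution $w$: its data at $\tau_1$ are supported in the $|\tau_1-\tau_2|$-enlargement of the small ball, which under the hypothesis $|x(\tau_1)-x(\tau_2)|\geq |\tau_1-\tau_2|-cN(\tau_1)^{-1}$ lies (up to an $O(c/N(\tau_1)+C_\eta/N(\tau_2))$ overlap absorbed by choosing $c$ small) on the wrong side of the hyperplane through $x(\tau_1)$. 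Hence $u$ carries critical norm only $O(\eta)$ in the half-space $\{\omega\cdot(x-x(\tau_1))>0\}$ at time $\tau_1$, contradicting part (i). The small data/stability step is precisely what converts support information for $w$ into norm information for $u$, and it cannot be replaced by a frequency decomposition of $u$ itself.
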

The arguments used to obtain the claims in Lemma $\ref{lem2a}$ follow from the small data theory and the finite speed of propagation; in particular the discussion of Proposition $4.1$ and Lemma $4.4$ in \cite{KVNLW3} applies equally well to the current setting.

We are now ready to prove the main result of this section,
\begin{theorem}[Subluminality]
\label{thm_sublum}
Suppose that $u:\mathbb{R}\times\mathbb{R}^5\rightarrow\mathbb{R}$ is an almost periodic solution to (NLW) satisfying $(u,u_t)\in L_t^\infty(\dot{H}_x^{3/2}\times\dot{H}_x^{1/2})$ and such that $\inf_{t\in \mathbb{R}} N(t)\geq 1$.  Then there exists $\delta>0$ such that for every $t,\tau\in \mathbb{R}$ with $|t-\tau|\geq \delta^{-1}$, we have
\begin{align*}
|x(t)-x(\tau)|\leq (1-\delta)|t-\tau|.
\end{align*}
\end{theorem}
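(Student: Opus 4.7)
The strategy is proof by contradiction, pitting the frequency-localized energy-flux bound of Lemma \ref{lem2new} (an upper estimate on a light-cone integral) against the frequency-localized potential-energy concentration of Lemma \ref{lem3} (a lower estimate on a ball integral). The coarea formula will bridge the two, once Lemma \ref{lem_a2} is used to show that whenever subluminality fails, each ball $B(x(s),C/N(s))$ is trapped in a thin spherical shell around the forward light cone from $(\tau,x(\tau))$.

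Concretely, I would suppose the conclusion fails and produce sequences $\tau_n<t_n$ with $T_n:=t_n-\tau_n\to\infty$ and $|x(t_n)-x(\tau_n)|\geq (1-1/n)T_n$. After a space-time translation we may assume $\tau_n=0$, $x(0)=0$, so $|x(T_n)|\geq (1-1/n)T_n$. Two applications of Lemma \ref{lem_a2} (on $[0,s]$ and $[s,T_n]$) plus the triangle inequality then force every intermediate center to satisfy
$$s-\tfrac{T_n}{n}-C\bigl(N(s)^{-1}+N(T_n)^{-1}\bigr)\leq |x(s)|\leq s+C\bigl(N(s)^{-1}+N(0)^{-1}\bigr),$$
so each ball $B(x(s),C/N(s))$ lies within a tube of thickness $O(T_n/n+1/N(\cdot))$ of the cone $\{|y|=s\}$.

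Next, fix a small dyadic $N\leq N_0$ so that Lemma \ref{lem3} applies, and extract the lower bound
$$\int_0^{T_n}\!\int_{|y-x(s)|\leq C/N(s)}|u_{\geq 8N}(s,y)|^4\,dy\,ds\geq C_u\int_0^{T_n}N(s)^{-1}\,ds.$$
The coarea formula in the radial variable $r=|y|$, combined with the geometric placement above, bounds the left-hand side by the integral of the same non-negative quantity over the thin tube $\{(s,y):|y|-s\in[-O(1/N),\,T_n/n+O(1/N)]\}$. Writing this tube integral as an average in $\sigma$ of light-cone integrals $\{|y|=s-\sigma\}$, a Littlewood--Paley paraproduct expansion based on the splitting $u=u_{<N/8}+u_{\geq N/8}$ — using the smallness of low frequencies in the critical norm \eqref{eqrem1} and the improved integrability of Proposition \ref{prop2} to control the cross terms — dominates the integral of $|u_{\geq 8N}|^4$ by that of $P_{\geq N}[u^3]\,u_{\geq N}$ up to negligible errors. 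Applying Lemma \ref{lem2new} with the vertex of each shifted cone then yields
$$\int_0^{T_n}N(s)^{-1}\,ds\lesssim_u\Bigl(\tfrac{T_n}{n}+\tfrac{1}{N}\Bigr)N^{-1}.$$
In the soliton scenario $N(t)\equiv 1$ makes the left-hand side equal to $T_n$; sending $n\to\infty$ with $N$ fixed forces $T_n\lesssim_u N^{-2}$, contradicting $T_n\to\infty$. In the cascade scenario one first restricts to a sub-interval where $N(s)$ stays bounded (using $\inf N\geq 1$ together with almost periodicity to ensure this sub-interval carries a positive fraction of $[0,T_n]$), whence the same comparison forces a contradiction.

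The main obstacle I anticipate is the paraproduct reduction $|u_{\geq 8N}|^4\lesssim P_{\geq N}[u^3]\,u_{\geq N}+\text{errors}$ once integrated over the thin tube: the cross terms involving $u_{<N/8}$ and $u_{N/8<\cdot<8N}$ must be shown genuinely subdominant compared to the main term, and this is precisely where \eqref{eqrem1} and Proposition \ref{prop2} enter critically. A secondary difficulty is the cascade scenario, since $\int_0^{T_n}N(s)^{-1}ds$ need not grow linearly in $T_n$; the sub-interval selection must be calibrated against the choice of $N$ to keep the lower bound strictly above the control supplied by Lemma \ref{lem2new}.
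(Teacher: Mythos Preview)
Your overall strategy---contradiction, trapping the balls $B(x(s),C/N(s))$ in a thin shell around the light cone, then pitting Lemma~\ref{lem3} against Lemma~\ref{lem2new} after a paraproduct expansion---matches the paper's. The genuine gap is in the \emph{cascade} case, and it is structural rather than cosmetic.

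The paper does \emph{not} argue directly with a global sequence $T_n\to\infty$. Instead it first reduces (citing \cite{KVNLW3}) to a local claim: there exists $A>1$ such that on every interval $[t_0,t_0+A N(t_0)^{-1}]$ the trajectory must at some point fall $A^{-1}N(t_0)^{-1}$ behind the cone. The negation of this local claim, combined with Lemma~\ref{lem2a} (not merely Lemma~\ref{lem_a2}), forces $c^2 N(t_0)\leq N(t)\leq c^{-2}N(t_0)$ on the whole interval. This comparability of $N(t)$ is what makes the argument close: the shell thickness is then $\sim\beta N(t_0)^{-1}$ with $\beta$ independent of $A$, the lower bound from Lemma~\ref{lem3} becomes $\gtrsim_u (A-\beta)N(t_0)^{-1}$, and the paraproduct error terms are $\lesssim\eta(A-\beta)N(t_0)^{-1}$, so they can be absorbed by choosing $\eta$ small \emph{uniformly in $t_0$}. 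Sending $A\to\infty$ yields the contradiction, and the soliton and cascade cases are handled simultaneously.

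In your setup, working on $[0,T_n]$ with only Lemma~\ref{lem_a2}, nothing prevents $N(s)$ from being enormous on most of the interval. The error terms in your paraproduct step are of order $\eta T_n$ (they come from $L_t^\infty L_x^p$ bounds integrated over $[0,T_n]$), while the lower bound is $C_u\int_0^{T_n}N(s)^{-1}ds$. In the cascade scenario the latter can be $o(T_n)$, so the error cannot be absorbed regardless of how small $\eta$ is. Your proposed fix---restricting to a sub-interval where $N(s)$ stays bounded ``using $\inf N\geq 1$ together with almost periodicity''---does not work as stated: almost periodicity gives no control on the time-variation of $N(t)$, and the hypothesis $|x(T_n)|\geq(1-1/n)T_n$ is too weak to invoke Lemma~\ref{lem2a} at the endpoints once $T_n/n>c$. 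What is really needed is the local reduction and Lemma~\ref{lem2a}, which together pin down $N(t)$ on an interval whose length you control.
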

\begin{proof} 
As in \cite{KVNLW3}, we note that it suffices to show the following claim: there exists $A>1$ such that for every $t_0\in\mathbb{R}$ there exists $t\in [t_0,t_0+AN(t_0)^{-1}]$ such that
\begin{align}
\label{goalA1}|x(t)-x(t_0)|\leq |t-t_0|-A^{-1}N(t_0)^{-1}.
\end{align}

Suppose to the contrary that ($\ref{goalA1}$) failed.  Then for every $A>0$ there exists $t_0=t_0(A)\in\mathbb{R}$ such that
\begin{align}
|x(t)-x(t_0)|>|t-t_0|-A^{-1}N(t_0)^{-1}\label{eaa1}
\end{align}
on $[t_0,t_0+AN(t_0)^{-1}]$.

Let $c$ be as in Lemma $\ref{lem2a}$, fix $A>c^{-1}$, and choose $t_0$ as in ($\ref{eaa1}$).  As a first step, we show 
\begin{align}
c^2N(t_0)\leq N(t)\leq c^{-2}N(t_0)\quad \textrm{for}\quad t\in [t_0,t_0+AN(t_0)^{-1}].\label{eaa2}
\end{align}

To obtain ($\ref{eaa2}$), suppose first that $t\in [t_0,t_0+AN(t_0)^{-1}]$ satisfies $N(t)\leq N(t_0)$.  By ($\ref{eaa1}$), we then have the inequality
$|x(t)-x(t_0)|>|t-t_0|-cN(t_0)^{-1}\geq |t-t_0|-cN(t)^{-1}$,
so that Lemma $\ref{lem2a}$ (ii) implies $c^2N(t_0)\leq N(t)$.  Moreover, the bound $N(t)\leq c^{-2}N(t_0)$ follows trivially from $c<1$, verifying ($\ref{eaa2}$) in this case.  On the other hand, when $N(t)>N(t_0)$, the bound $c^2N(t_0)\leq N(t)$ is immediate, and observing that ($\ref{eaa1}$) implies $|x(t)-x(t_0)|>|t-t_0|-cN(t_0)^{-1}$, we invoke Lemma $\ref{lem2a}$ (ii) again to obtain $N(t)\leq c^{-2}N(t_0)$ as desired.  Thus, ($\ref{eaa2}$) holds.

Having obtained ($\ref{eaa2}$), we now continue towards obtaining the desired contradiction to prove the theorem.  Note that by space and time translation symmetries it suffices to assume $t_0=0$ and $x(t_0)=0$.  We will obtain the desired contradiction by obtaining incompatible lower and upper bounds on 
\begin{align}
\int_{\beta N(0)^{-1}}^{AN(0)^{-1}}\int_{||x|-t|\leq \beta N(0)^{-1}} |u_{\geq 8N}(t,x)|^4dxdt.\label{eaa3}
\end{align}
for a suitable value of $\beta>0$ to be chosen later in the argument.  

We first obtain the lower bound on ($\ref{eaa3}$).  	Toward this end, we first obtain the set inclusion
\begin{align}
\{x:|x-x(t)|\leq C/N(t)\}\subset \{x:\big||x|-t\big|\leq \beta N(0)^{-1}\}.\label{eqinclu}
\end{align}

To show ($\ref{eqinclu}$), we note that for $t_0=0$ and $x(t_0)=0$, ($\ref{eaa1}$) becomes 
\begin{align*}
|x(t)|>|t|-A^{-1}N(0)^{-1}>|t|-cN(0)^{-1}\quad\textrm{for}\quad 0\leq t\leq AN(0)^{-1}.
\end{align*}
On the other hand, ($\ref{fsprop_xt}$) and ($\ref{eaa2}$) imply
\begin{align*}
|x(t)|\leq |t|+C(N(t)^{-1}+N(0)^{-1})\leq |t|+CN(0)^{-1},
\end{align*}
and we therefore conclude that for all $t\in [0,AN(0)^{-1}]$,
\begin{align}
\big||x(t)|-t\big|\lesssim_u N(0)^{-1}.\label{five13}
\end{align}
We now use ($\ref{five13}$) to observe that the inequality $|x-x(t)|\leq C/N(t)$ gives
\begin{align*}
\big||x|-t\big|&=\big||x|-|x(t)|\big|+\big||x(t)|-t\big|\\
&\leq |x-x(t)|+\big||x(t)|-t\big|\\
&\lesssim_u N(t)^{-1}+N(0)^{-1}\\
&\lesssim_u (c^{-2}+1)N(0)^{-1},
\end{align*}
where the implied constant is independent of $A$.  We may therefore conclude ($\ref{eqinclu}$) when $\beta$ is chosen sufficiently large (independent of $A$).  Then, invoking Lemma $\ref{lem3}$, we obtain 
\begin{align}
\nonumber (\ref{eaa3})&\geq \int_{\beta N(0)^{-1}}^{AN(0)^{-1}}\int_{|x-x(t)|\leq C/N(t)} |u_{\geq 8N}(t,x)|^4dxdt\\
\nonumber &\gtrsim_u \int_{\beta N(0)^{-1}}^{AN(0)^{-1}}N(t)^{-1}dt\\
&\gtrsim_u c^2(A-\beta)N(0)^{-2}.\label{eaaa2} 
\end{align}
which establishes the desired lower bound on ($\ref{eaa3}$),

We now establish the upper bound on ($\ref{eaa3}$).  Using Lemma $\ref{lem2new}$ followed by the change of variables $z=x-y$ in the $x$ variable, Fubini's theorem, and the change of variables $y'=z+y$ in the $y$ variable, we obtain
\begin{align*}
&[2\beta N(0)^{-1}]^5(N^{-1}+\eta (A-\beta)N(0)^{-1})\\
&\hspace{0.2in}\gtrsim_u \int_{|y|\leq 2\beta N(0)^{-1}}\int_{\beta N(0)^{-1}}^{AN(0)^{-1}}\int_{|x-y|=t} |u_{\geq N}(t,x)|^4dS(x)dtdy\\
&\hspace{0.2in}\gtrsim_u \int_{\beta N(0)^{-1}}^{AN(0)^{-1}}\int_{|z|=t}\int_{|y'-z|\leq 2\beta N(0)^{-1}} |u_{\geq N}(t,x)|^4dy'dS(z)dt.
\end{align*}
for $N$ sufficiently small.
Observing the inequality
\begin{align*}
\int_{|z|=t} \chi_{\{z:|z-y'|\leq 2\beta N(0)^{-1}\}}(z)dS(z)\gtrsim [\beta N(0)^{-1}]^4
\end{align*}
for $t\geq \beta N(0)^{-1}$ and $y'\in \{||y'|-t|\leq \beta N(0)^{-1}\}$, we therefore get
\begin{align}
(\ref{eaa3})&\lesssim_u 2^5\beta N(0)^{-1}N^{-1}+\eta 2^5\beta(A-\beta)N(0)^{-2}\label{eq2a}
\end{align}
Combining the upper and lower bounds ($\ref{eq2a}$) and $(\ref{eaaa2})$ for $(\ref{eaa3})$ and choosing $\eta$ sufficiently small (depending on the implicit constant and $\beta$), we then have
\begin{align}
(A-\beta)N(0)^{-2}\lesssim_u 2^5\beta N(0)^{-1}N^{-1}.\label{eqa111}
\end{align}
Note that the implicit constant in $(\ref{eqa111})$ depends only on the a priori bound $\lVert (u,u_t)\rVert_{L_t^\infty(\dot{H}_x^{3/2}\times\dot{H}_x^{1/2})}$ and the compactness modulus function, which are invariant under scaling.  In particular, using the scaling of the equation we may assume that $N(0)=1$.  Letting $A\rightarrow\infty$ then gives the desired contradiction.
\end{proof}

\section{Improved decay properties}
\label{sec_regularity}

The main goal of this section is to complete the proof of the following theorem, which states that the soliton-like and low-to-high frequency cascade solutions have finite energy.
\begin{theorem}
\label{thm_negreg}
Assume that $u:\mathbb{R}\times\mathbb{R}^5\rightarrow\mathbb{R}$ is an almost periodic solution to (NLW) with $(u,u_t)\in L_t^\infty(\mathbb{R};\dot{H}_x^{3/2}\times \dot{H}_x^{1/2})$ and 
\begin{align*}
\inf_{t\in I} N(t)\geq 1.
\end{align*}
Then $(u,u_t)\in L_t^\infty(\mathbb{R};\dot{H}_x^{1}\times L_x^2)$ and, moreover, there exist $\beta>0$ and $N_1>0$ such that 
\begin{align*}
\lVert \langle x-x(t)\rangle^\beta P_{\leq N_1}\nabla u\rVert_{L_t^\infty L_x^2}+\lVert \langle x-x(t)\rangle^\beta P_{\leq N_1}u_t\rVert_{L_t^\infty L_x^2}<\infty.
\end{align*}
\end{theorem}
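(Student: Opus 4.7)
My approach follows the three-step outline sketched in the introduction, adapted to $d=5$ by introducing spatial localization into the double Duhamel integrals. At each step I aim to establish an implication of the form
\begin{align*}
(u,u_t)\in L_t^\infty(\dot H_x^s\times \dot H_x^{s-1}) \ \Longrightarrow\ (u,u_t)\in L_t^\infty(\dot H_x^{s-s_0}\times \dot H_x^{s-s_0-1})
\end{align*}
for some fixed $s_0>0$, starting from the a priori control at $s=3/2$ and iterating a bounded number of times until $s\leq 1$, at which point the energy is finite. The high-frequency part $P_{\geq N_0}u$ is controlled immediately by the a priori bound together with Bernstein's inequality, so all the work is concentrated on the low-frequency part $P_{\leq N_0}u$.

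The core estimate, at each iteration step, is a double Duhamel identity for a single Littlewood--Paley piece $P_N u(t_0)$ with $N\leq N_0$, written at scale $s-s_0$: one writes $\||\nabla|^{s-s_0}P_N u(t_0)\|_{L_x^2}^2$ as an inner product of the past Duhamel tail against the future Duhamel tail, using Proposition $\ref{prop_duhamel}$. In dimensions $d\geq 6$ this double time integral is already absolutely convergent; in $d=5$ the exponents are borderline. To recover integrability, I would localize the inner product to a cube $Q$ of side $\sim N^{-1}$ via a smooth cut-off $\chi_Q$ and sum the resulting bounds over a tiling of $\mathbb{R}^5$ by such cubes. The mismatch estimates of Lemma $\ref{lem_mismatch}$ handle the commutator errors produced when $\chi_Q$ is moved past $P_N$ and $|\nabla|^{s-s_0}$, giving contributions that are negligible far from $Q$.

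The localized double time integral is then estimated via three tools working in concert: (i) Proposition $\ref{prop2}$, which upgrades $u\in L_t^\infty L_x^5$ to $L_t^\infty L_x^p$ for a range of $p<5$ that widens as the $\dot H_x^s$ decay improves; (ii) Proposition $\ref{prop_quant}$, which provides quantitative polynomial decay $R^{-8(5-w)/(12-w)}$ for the mass of $u(t)$ outside a ball of radius $R$ around $x(t)$; and (iii) the subluminality of Theorem $\ref{thm_sublum}$, which ensures that for $|t_\pm-t_0|$ large compared to $\dist(Q,x(t_0))$, the centering $x(t_\pm)$ moves strictly inside the light cone, so that only the tail of $u(t_\pm)$ at distance $\gtrsim|t_\pm-t_0|$ from $x(t_\pm)$ can contribute to the cube $Q$. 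Combined with the dispersive bound ($\ref{eq_dispersive}$) and a weak-diffraction lemma adapted from \cite{KVNLW3}, this produces $t_\pm$-integrable estimates with spatial factors that are summable over the cube cover, provided $s_0$ is chosen small. Each iteration lowers $s$ and expands the range of $p$ in (i), which in turn sharpens the decay rate in (ii), providing exactly the feedback needed to continue the iteration until $s\leq 1$ and $(u,u_t)\in L_t^\infty(\dot H_x^1\times L_x^2)$. The weighted statement on $P_{\leq N_1}$ falls out of the cube-localized estimate on the last iteration: since the bound on $\|\chi_Q \nabla P_{\leq N_1}u\|_{L_x^2}^2$ decays as a positive power of $\dist(Q,x(t))$, this decay can be integrated against $\langle \dist(Q,x(t))\rangle^{2\beta}$ for any $\beta>0$ strictly smaller than that power.

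The main obstacle is precisely the borderline convergence of the double Duhamel integral in $d=5$: without spatial localization it diverges, and the entire architecture of the proof is built to ensure that the quantitative inputs from Propositions $\ref{prop2}$, $\ref{prop_quant}$ and Theorem $\ref{thm_sublum}$ combine to beat this borderline decay at every iteration step. The bookkeeping needed to match the current $L_x^p$ exponent, the current $\dot H_x^s$ level, the quantitative decay rate $R^{-8(5-w)/(12-w)}$, and the mismatch losses in Lemma $\ref{lem_mismatch}$ so that the iteration closes is the delicate technical heart of the argument.
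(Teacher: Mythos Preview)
Your proposal captures the paper's strategy and its key inputs correctly: the proof of Theorem \ref{thm_negreg} is indeed a finite iteration of a single-step lemma (Lemma \ref{lem_somenegreg}) that lowers the Sobolev exponent by a fixed $\epsilon>0$, and that lemma is proved by a spatially localized double Duhamel argument drawing on Propositions \ref{prop2} and \ref{prop_quant}, the subluminality Theorem \ref{thm_sublum}, and the weak diffraction Lemma \ref{thm_weakdiff}, with the high-frequency part of $u$ disposed of by Bernstein as you say.

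The one organizational difference worth flagging is your localization scheme. You propose tiling $\mathbb{R}^5$ by cubes of uniform side $\sim N^{-1}$ and working frequency-by-frequency on $P_N u$; the paper instead works directly with $P_{\leq N_1}u$ and covers $\{x:|x-x(t_0)|\gtrsim 1\}$ by Whitney balls $B_R$ of radius $R$ centered at distance $3R$ from $x(t_0)$. The Whitney choice is better matched to the available inputs: Proposition \ref{prop_quant}, the cutoff construction in Lemma \ref{lem_cutoffs}, and the hypothesis of the weak diffraction Lemma \ref{thm_weakdiff} are all organized by the distance scale $R$ to $x(t)$ (in particular the crucial condition $|t-\tau|-|x-y|\gtrsim R$ in Lemma \ref{lem_cutoffs}(v) comes from subluminality applied at that scale). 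At the Whitney scale there are only $O(1)$ balls per dyadic annulus, so the per-ball decay actually obtained, $\lVert P_{\leq N_1}|\nabla|^{s-\epsilon}u\rVert_{L^2(B_R)}^2\lesssim R^{-\beta}$ with $\beta$ merely on the order of $10^{-3}$, is already enough both to sum and to deliver the weighted bound. A uniform tiling at scale $N^{-1}$ places $\sim (RN)^4$ cubes in each dyadic shell of radius $R$, and the per-ball decay the argument produces is far too weak to beat that growth; you would have to extract substantial additional smallness from the cube size, which the paper's estimates do not provide. So while your outline is right, the spatial covering should be Whitney-type rather than frequency-adapted.
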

Theorem $\ref{thm_negreg}$ will be proved by establishing a slight improvement of the decay properties of $u$, in which bounds of the form $(u,u_t)\in L_t^\infty(\dot{H}_x^s\times\dot{H}_x^{s-1})$ are shown to imply $(u,u_t)\in L_t^\infty(\dot{H}_x^{s-\epsilon}\times\dot{H}_x^{s-1-\epsilon})$ for $\epsilon>0$ sufficiently small (see Lemma $\ref{lem_somenegreg}$ below).  Theorem $\ref{thm_negreg}$ is then proven by an iterative application of Lemma $\ref{lem_somenegreg}$, starting with the a priori bound $(u,u_t)\in L_t^\infty(\dot{H}_x^{s_c}\times\dot{H}_x^{s_c-1})$.

More precisely, we obtain 
\begin{lemma}
\label{lem_somenegreg}
Assume that $u:\mathbb{R}\times\mathbb{R}^5\rightarrow\mathbb{R}$ is a global almost periodic solution to (NLW) with $(u,u_t)\in L_t^\infty(\mathbb{R};\dot{H}_x^{3/2}\times\dot{H}_x^{1/2})$ and
\begin{align*}
\inf_{t\in I} N(t)\geq 1.
\end{align*}

Then there exist constants $\epsilon_0,\beta>0$ and a dyadic number $N_1>0$ such that for any $s\in (1,\frac{3}{2}]$, the condition
\begin{align*}
(u,u_t)\in L_t^\infty(\mathbb{R};\dot{H}_x^{s}\times \dot{H}_x^{s-1})
\end{align*}
implies 
\begin{align}
\label{plus1}\lVert \langle x-x(t)\rangle^\beta P_{\leq N_1}|\nabla|^{s-\epsilon} u\rVert_{L_t^\infty L_x^2}+\lVert \langle x-x(t)\rangle^\beta P_{\leq N_1}|\nabla|^{s-1-\epsilon}u_t\rVert_{L_t^\infty L_x^2}<\infty
\end{align}
for every $0<\epsilon<\epsilon_0$ satisfying $s-\epsilon\geq 1$.

In particular, this allows us to conclude
\begin{align}\label{resthmnegreg}
(u,u_t)\in L_t^\infty(\mathbb{R};\dot{H}_x^{s-\epsilon}\times \dot{H}_x^{s-1-\epsilon}) 
\end{align}
for any such $\epsilon$.
\end{lemma}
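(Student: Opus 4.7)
The strategy will follow the double-Duhamel technique of \cite{CKSTT}, as adapted to concentration-compactness settings in \cite{KVECriticalNLS,BulutCubic}, combined with the spatial-localization procedure of \cite{KVNLW3} that underlies the dimension-three treatment. The conceptual point is that the improved integrability from Proposition \ref{prop2}, the quantitative decay from Proposition \ref{prop_quant}, and the subluminality of Theorem \ref{thm_sublum} should together supply enough extra decay to overcome the failure of convergence suffered by the unlocalized double-Duhamel integrals in dimension five.

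Fix $t_0\in\mathbb{R}$ and, using time and space translation invariance, reduce to $t_0=0$ with $x(0)=0$. Pick a dyadic $N_1>0$ (small, to be chosen later), a spatial scale $R\geq 1$, and an associated partition of unity $\{\chi_k\}$ with each $\chi_k$ supported in a ball $B(y_k,R)$. I plan to estimate the squared weighted norm
\begin{equation*}
\lVert \langle x\rangle^\beta P_{\leq N_1}|\nabla|^{s-\epsilon}u(0)\rVert_{L_x^2}^2
=\langle P_{\leq N_1}|\nabla|^{s-\epsilon}u(0),\,\langle x\rangle^{2\beta}P_{\leq N_1}|\nabla|^{s-\epsilon}u(0)\rangle,
\end{equation*}
by inserting the \emph{forward} Duhamel representation of Proposition \ref{prop_duhamel} in the first factor and the \emph{backward} one in the second. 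This reduces the bound to estimating a double time integral of the form
\begin{equation*}
\int_{0}^{\infty}\!\!\int_{-\infty}^{0}\bigl|\langle G_1(t'),G_2(t'')\rangle\bigr|\,dt''\,dt',
\end{equation*}
where each $G_i$ is a spatially-weighted, Littlewood-Paley truncated piece of $\tfrac{\sin(-t|\nabla|)}{|\nabla|}F(u)$.

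The next step is to localize the spatial pairing via $\{\chi_k^2\}$, so that on each cell $B(y_k,R)$ one reduces to bounds on $\chi_k F(u(t))$. On each such piece I would apply: the dispersive estimate (\ref{eq_dispersive}) to control the propagator in $L_x^p$; the improved integrability $u\in L_t^\infty L_x^p$ for $p<5$ coming from Proposition \ref{prop2}, which places $F(u)$ in an $L_x^{p'}$ space with an integrable temporal decay; the quantitative decay of Proposition \ref{prop_quant}, so that when $y_k$ lies far from $x(t)$ the contribution of $\chi_k F(u(t))$ is small and can absorb the polynomial growth of the weight $\langle x\rangle^\beta$; the mismatch estimates of Lemma \ref{lem_mismatch}, to handle the nonlocal action of $P_{\leq N_1}$ and $|\nabla|^{s-\epsilon}$ across the supports $\supp\chi_k$; and the subluminality of Theorem \ref{thm_sublum}, which forces $x(t)$ to remain strictly inside the light cone from the origin and thus uniformly far from cells $B(y_k,R)$ whose centers are large relative to $|t|$. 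With $R$, $N_1$, and $\epsilon_0$ chosen compatibly, summing over $k$ and integrating in $t',t''$ yields (\ref{plus1}); (\ref{resthmnegreg}) is then immediate, since $\langle x-x(t)\rangle^\beta\geq 1$ handles the low-frequency part and the high-frequency part $P_{>N_1}|\nabla|^{s-\epsilon}u=|\nabla|^{-\epsilon}P_{>N_1}|\nabla|^s u$ is controlled directly by the hypothesis $(u,u_t)\in L_t^\infty(\dot H_x^s\times\dot H_x^{s-1})$.

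The hard part will be engineering $R$, $N_1$, $\beta$, and $\epsilon_0$ so that the spatial sum and the double-time integral both converge: one needs the spatial decay from Proposition \ref{prop_quant} to dominate the polynomial weight (which pins $\beta$ below the decay exponent $8(5-w)/(12-w)$ for an admissible $w$), the dispersive factor together with the integrability gain from Proposition \ref{prop2} to beat the nonintegrable long-time tail that obstructs the straightforward double-Duhamel in $d=5$, and the mismatch errors from Lemma \ref{lem_mismatch} to be summable after the low-frequency cutoff at $N_1$. Carrying out this balancing act, which has no direct analogue in the $d\geq 6$ arguments of \cite{BulutCubic}, is the technical heart of the lemma.
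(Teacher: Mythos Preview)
Your outline correctly identifies most of the ingredients---the double Duhamel, the spatial localization via a covering argument, Propositions \ref{prop2} and \ref{prop_quant}, the mismatch estimates, and subluminality---but it is missing the decisive step, and the plan as stated would not close.

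The difficulty is the ``main term'': the contribution to the double Duhamel pairing in which, for both the forward integrand at time $t'>0$ and the backward integrand at time $t''<0$, the nonlinearity $F(u)$ is concentrated near $x(t')$ and $x(t'')$ respectively, at intermediate times $|t'|,|t''|\sim R$. On this region Proposition \ref{prop_quant} gives you nothing (the solution is \emph{not} far from $x(t)$), so your spatial-decay mechanism does not apply; and in $d=5$ the dispersive estimate (\ref{eq_dispersive}) by itself produces a non-integrable time tail, which is exactly the obstruction you mention but do not overcome. Neither the integrability gain of Proposition \ref{prop2} nor your partition-of-unity sum resolves this---the integrand there is simply too large in absolute value.

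The paper handles this term by a genuinely different mechanism. First, it uses a Whitney-ball covering (balls $B_R$ with $|x_0|=3R$, not a fixed-scale partition) and a multiplier $\theta(i\nabla)$ with compactly supported convolution kernel, so that the strong Huygens principle in odd dimension five permits inserting a cutoff $\rho_R$ supported in a thin neighborhood of the light cone from $B_R$. Second, the cutoffs are arranged so that on the ``near-$x(t)$'' pieces (the $(1-\sigma_R)$ region), subluminality forces the supports of the forward and backward integrands to satisfy $|t'-t''|-|x-y|\gtrsim R$. Third---and this is the missing idea---the resulting inner product is controlled not by size but by \emph{oscillation}: a weak diffraction lemma (Lemma \ref{thm_weakdiff}), proved by stationary-phase/van der Corput arguments on the propagator kernel, extracts $R^{-1/26}$ decay from that spacetime gap. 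Without this oscillatory-integral step the argument does not close in dimension five; your proposal should incorporate it explicitly, along with the Whitney scaling and the $\theta(i\nabla)$ device that makes the Huygens localization possible.
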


\begin{proof}
As in \cite{KVNLW3}, we begin by observing that to prove the claim it suffices to obtain an estimate of the form
\begin{align}
\label{plus3}\lVert P_{\leq N_1}|\nabla|^{s-\epsilon}u(0)\rVert_{L_x^2(B_R)}+\lVert P_{\leq N_1}|\nabla|^{s-1-\epsilon} u_t(0)\rVert_{L_x^2(B_R)}\lesssim_u R^{-\beta}
\end{align}
for some $\beta>0$ and all balls $B_R=\{x:|x-x_0|<R\}$ with $x_0\in\mathbb{R}^5$ and $|x_0|=3R$.  

Indeed, using the time and space translation symmetries to reduce the desired estimate ($\ref{plus1}$) to the case $t=0$, $x(0)=0$, a covering argument by Whitney balls (that is, balls of the form $B_R$) shows that ($\ref{plus1}$) follows from ($\ref{plus3}$).

To see that ($\ref{resthmnegreg}$) follows from ($\ref{plus1}$), fix $\epsilon\in (0,s-1]$.  Decomposing the solution into low and high frequencies, and using Bernstein's inequality, we obtain
\begin{align}
\nonumber &\lVert |\nabla|^{s-\epsilon} u\rVert_{L_t^\infty L_x^2}+\lVert |\nabla|^{s-1-\epsilon} u_t\rVert_{L_t^\infty L_x^2}\\
\nonumber &\hspace{0.2in}\leq \lVert |\nabla|^{s-\epsilon} P_{\leq N_1}u\rVert_{L_t^\infty L_x^2}+\lVert |\nabla|^{s-1-\epsilon}\partial_t P_{\leq N_1}u\rVert_{L_t^\infty L_x^2}\\
\nonumber &\hspace{0.4in}+\sum_{N>N_1} \lVert |\nabla|^{s-\epsilon} u_N\rVert_{L_t^\infty L_x^2}+\lVert |\nabla|^{s-1-\epsilon} \partial_t u_N\rVert_{L_t^\infty L_x^2}\\
\nonumber &\hspace{0.2in}\lesssim \lVert |\nabla|^{s-\epsilon} P_{\leq N_1}u\rVert_{L_t^\infty L_x^2}+\lVert |\nabla|^{s-1-\epsilon}\partial_t P_{\leq N_1}u\rVert_{L_t^\infty L_x^2}+\sum_{N>N_1}N^{s-\epsilon-\frac{3}{2}}\\
\label{eq_nreg}&\hspace{0.2in}\lesssim \lVert |\nabla|^{s-\epsilon} P_{\leq N_1}u\rVert_{L_t^\infty L_x^2}+\lVert |\nabla|^{s-1-\epsilon}\partial_t P_{\leq N_1}u\rVert_{L_t^\infty L_x^2}+1,
\end{align}
where to obtain the second inequality we have used the a priori bound $(u,u_t)\in L_t^\infty(\dot{H}_x^{3/2}\times\dot{H}_x^{1/2})$.

Thus, in order to obtain ($\ref{plus1}$) and ($\ref{resthmnegreg}$), it suffices to show ($\ref{plus3}$).

Recalling the definition of $\theta$ in ($\ref{eq_theta}$), we now choose $0<\eta<2$ such that $\theta(\xi)\geq \frac{1}{2}$ for all $|\xi|<\eta$, and a dyadic number $N_1$ such that $N_1\leq \frac{\eta}{2}$.  Let $\phi_R$ be given by
\begin{align*}
\phi_R(x)=\phi\bigg(\frac{x-x_0}{R}\bigg),
\end{align*}
and note that $N_1\leq \frac{\eta}{2}$ gives $\supp \widehat{P_{\leq N_1}f}\subset \{\xi:|\xi|\leq \eta\}$ for $f\in L^2$, and thus
\begin{align*}
\lVert P_{\leq N_1}f\rVert_{L_x^2}\lesssim \lVert \theta(i\nabla)P_{\leq N_1}f\rVert_{L_x^2}.
\end{align*}

Using the Duhamel formula of Proposition $\ref{prop_duhamel}$ forward and backward in time, we obtain
\begin{align}
\nonumber &\lVert P_{\leq N_1}|\nabla|^{s-\epsilon}u(0)\rVert_{L_x^2(B_R)}+\lVert P_{\leq N_1}|\nabla|^{s-1-\epsilon} u_t(0)\rVert_{L_x^2(B_R)}\\
\nonumber &\hspace{0.2in}\lesssim \lVert \theta(i\nabla)P_{\leq N_1}|\nabla|^{s-\epsilon}u(0)\rVert_{L_x^2(B_R)}^2+\lVert \theta(i\nabla)P_{\leq N_1}|\nabla|^{s-1-\epsilon}\partial_t u(0)\rVert_{L_x^2(B_R)}^2\\
\nonumber &\hspace{0.2in}\lesssim -\bigg\langle \int_0^\infty\nabla \frac{\sin(-t|\nabla|)}{|\nabla|}\theta(i\nabla)|\nabla|^{s-1-\epsilon}P_{\leq N_1}F(u(t))dt,\\
\nonumber &\hspace{1.25in}\phi_R\int_{-\infty}^0\nabla \frac{\sin(-\tau|\nabla|)}{|\nabla|}\theta(i\nabla)|\nabla|^{s-1-\epsilon}P_{\leq N_1}F(u(\tau))d\tau\bigg\rangle\\
\nonumber &\hspace{0.4in}- \bigg\langle \int_0^\infty \cos(-t|\nabla|)\theta(i\nabla)|\nabla|^{s-1-\epsilon}P_{\leq N_1}F(u(t))dt,\\
&\hspace{1.25in}\phi_R\int_{-\infty}^0\cos(-\tau|\nabla|)\theta(i\nabla)|\nabla|^{s-1-\epsilon}P_{\leq N_1}F(u(\tau))d\tau\bigg\rangle\label{eqabcdef2}
\end{align}

In order to obtain bounds for the space-time integrals in ($\ref{eqabcdef2}$), following \cite{KVNLW3}, we now define the cutoff functions $\rho_R$, $\sigma_R$, $\tilde{\rho_R}$ and $\tilde{\sigma_R}$.
\begin{lemma}[Specification of the cutoffs]
\label{lem_cutoffs}
There exists $R_0>1$ such that for every $R>R_0$ there exist cutoff functions $\rho_R, \sigma_R$ and $\tilde{\rho_R}, \tilde{\sigma_{R}}\in C^\infty(\mathbb{R}\times\mathbb{R}^5;[0,1])$ such that 
\begin{enumerate}
\item for all $(t,x)\in \mathbb{R}\times \supp\phi_R$,
\begin{align*}
&\frac{\sin(-t|\nabla|)}{|\nabla|}\theta(i\nabla)|\nabla|^{s-1-\epsilon}P_{\leq N_1}F(u(t))\\
&\hspace{0.6in}=\frac{\sin(-t|\nabla|)}{|\nabla|}\theta(i\nabla)\rho_R(t,x)|\nabla|^{s-1-\epsilon}P_{\leq N_1}F(u(t))
\end{align*}
and
\begin{align*}
&\cos(-t|\nabla|)\theta(i\nabla)|\nabla|^{s-1-\epsilon}P_{\leq N_1}F(u(t))\\
&\hspace{0.6in}=\cos(-t|\nabla|)\theta(i\nabla)\rho_R(t,x)|\nabla|^{s-1-\epsilon}P_{\leq N_1}F(u(t)),
\end{align*}
\item for all $(t,x)\in\{(t,x):t<\frac{R}{2}$ or $t>\frac{10}{\delta}R\}$,
\begin{align*}
\sigma_R(t,x)=1,
\end{align*}
\item there exists $C_1>0$ such that for all $t\in\mathbb{R}$ and $\delta$ as in Theorem $\ref{thm_sublum}$,
\begin{align*}
\dist (\supp \rho_R\sigma_R(t),\supp (1-\tilde{\rho_R}\tilde{\sigma_R}(t)))\geq C_1(|t|+R)
\end{align*}
and
\begin{align*}
\dist(x(t),\supp(\tilde{\sigma_R}\tilde{\rho_R}(t)))\geq C_1(|t|+R),
\end{align*}
\item there exists $C_2>0$ such that
\begin{align*}
\lVert \nabla \sqrt{\rho_R\sigma_R}\rVert_{L_t^\infty L_x^5}\leq C_2,\quad\textrm{and}\quad\lVert \nabla [\rho_R(1-\sigma_R)]\rVert_{L_t^\infty L_x^5}\leq C_2,
\end{align*}
\item there exists $C_3>0$ such that for all $\tau<0<t$ and $x,y\in\mathbb{R}^5$ with $(t,x),(\tau,y)\in \supp (1-\sigma_R)$,
\begin{align*}
|t|+|\tau|+|x|+|y|\leq C_3R\quad\textrm{and}\quad |t-\tau|-|x-y|\geq C_3R.
\end{align*}
\end{enumerate}
\end{lemma}
\begin{proof}
Fix $R_0\geq 1$ to be determined later in the argument, and let $R>R_0$ be given.  Recall that $x_0\in\mathbb{R}^5$ is an arbitrary point satisfying $|x_0|=3R$.  

We begin by choosing $\rho_R$ such that $\rho_R(t,x)=1$ on $\{(t,x)\in \mathbb{R}\times\mathbb{R}^5:\big||x-x_0|-|t|\big|\leq \frac{6}{5}R\}$ and
\begin{align*}
\supp \rho_R\subset \{(t,x):(1-\tfrac{\delta}{10^6})|t|-\frac{6}{5}R\leq |x-x_0|\leq (1+\tfrac{\delta}{10^6})|t|+\tfrac{6}{5}R\},
\end{align*}
along with the condition that for each multi-index $\alpha=(\alpha_1,\cdots,\alpha_5)$, there exists $C_{\alpha}>0$ such that
\begin{align*}
|\partial_x^\alpha \rho_R|\leq C_\alpha(|t|+R)^{-|\alpha|}.
\end{align*}

Similarly, we choose $\sigma_R$ such that $\sigma_R=1$ on $\{(t,x): t\in (-\infty,\frac{R}{2})\cup (\frac{10}{\delta}R,\infty)\,\textrm{or}\,|x-x(t)|> \frac{\delta}{5}(|t|+R)\}$ and
\begin{align*}
\supp \sigma_R\subset\{(t,x): t\in (-\infty,\tfrac{R}{2})\cup (\tfrac{10}{\delta}R,\infty)\,\textrm{or}\,|x-x(t)|>\tfrac{\delta}{10}(|t|+R)\}
\end{align*}
along with the derivative bound
\begin{align*}
|\partial_x^\alpha \sigma_R|+|\partial_x^\alpha (1-\sigma_R)|\leq C_\alpha(|t|+R)^{-|\alpha|}
\end{align*}
for each multi-index $\alpha$.

Next, we choose $\tilde{\rho}_R$ such that $\tilde{\rho}_R=1$ on $\{(t,x):\dist(x,\{x:x\in \supp_x \rho_R(t,x)\})\leq \frac{1}{10}R+\frac{\delta}{10^6}|t|\}$, where $\supp_x$ denotes the support in $x$, and
\begin{align*}
\supp \tilde{\rho}_R\subset \{(t,x):(1-\tfrac{3\delta}{10^6})|t|-\tfrac{8}{5}R\leq |x-x_0|\leq (1+\tfrac{3\delta}{10^6})|t|+\tfrac{8}{5}R\}
\end{align*}
along with the derivative bound
\begin{align}
\label{dbound1} |\partial_x^\alpha \tilde{\rho}_R|\leq C_\alpha(|t|+R)^{-|\alpha|}
\end{align}
for each multi-index $\alpha$.

To finish the construction, we now specify $\tilde{\sigma}_R$.  In particular, we choose $\tilde{\sigma}_R$ such that $\tilde{\sigma}_R=1$ on $\{(t,x):\dist(x,\supp_x \sigma_R(t))\leq \frac{\delta}{40}(|t|+R)\}$ and
\begin{align*}
\supp_x \tilde{\sigma}_R(t)\subset \{x:|x-x(t)|\geq \tfrac{\delta}{20}(|t|+R)\}
\end{align*}
for each $t\in [\frac{R}{2},\frac{10}{\delta}R]$, as well as the derivative bounds
\begin{align}
\label{dbound2} |\partial_x^\alpha \tilde{\sigma}_R|\leq C_\alpha (|t|+R)^{-|\alpha|}
\end{align}
for each multi-index $\alpha$.

The properties (i)-(v) are now easily verified: noting that $\supp \widehat{\theta}\subset \{x:|x_i|\leq 4, i=1,\cdots,5\}$, (i) follows from the representations ($\ref{lop1}$) and ($\ref{lop2}$) for the linear propagator (in particular, this is a formulation of Huygens' principle).  Properties (ii) and (iii) then follow directly from the construction, while (iv) follows from the derivative bounds given in the construction of each function, after a suitable change of variables.

To conclude (v), we note $\supp (1-\sigma_R)\subset \{(t,x):|t|\in [\frac{R}{2},\frac{10}{\delta}R]\}$, and thus there exists $C>0$ such that $|t|+|\tau|\leq CR$ for $(t,x),(\tau,y)\in \supp (1-\sigma_R)$.  
The finite speed of propagation in the form of Lemma $\ref{lem_a2}$ then implies
\begin{align*}
|x(t)|\leq |x(t)-x(0)|+|x(0)|\leq |t|+|x(0)|+C.
\end{align*}
for all $t\in \mathbb{R}$, and thus for all $(t,x)\in \supp (1-\sigma_R)$ we have
\begin{align*}
|x|&\leq |x-x(t)|+|x(t)|\leq \tfrac{\delta}{5}(|t|+R)+|t|+C\leq CR
\end{align*}
for some $C>0$, where we have used the facts $R\geq 1$ and $\sigma_R=1$ on $\{(t,x):|x-x(t)|>\frac{\delta}{5}(|t|+R)\}$.  We therefore conclude that there exists $C>0$ such that for all $(t,x),(\tau,y)\in \supp (1-\sigma_R)$,
\begin{align*}
|t|+|\tau|+|x|+|y|\leq CR
\end{align*}
which is the first component of (v).

To see that the second component holds, we fix $R_0>\max\{1,\delta^{-1}\}$ and apply Theorem $\ref{thm_sublum}$ to obtain
\begin{align*}
|t-\tau|-|x-y|&\geq \delta|t-\tau|-|x-x(t)|-|x(\tau)-y|\\
&\geq \tfrac{2\delta}{5}R,
\end{align*}
where we observe that $(t,x),(\tau,y)\in \supp (1-\sigma_R)$ with $\tau<0<t$ gives the bounds $|t-\tau|\geq R$, $|x-x(t)|\leq \frac{\delta}{5}(t+R)$ and $|y-x(\tau)|\leq \frac{\delta}{5}(-\tau+R)$.

This completes the proof of Lemma $\ref{lem_cutoffs}$.
\end{proof}

Having specified the cutoffs, we now use these functions to decompose the spatial integration given by the inner product in ($\ref{eqabcdef2}$), obtaining
\begin{align}
\nonumber (\ref{eqabcdef2})&\lesssim -\langle \int_0^\infty \nabla \frac{\sin(-t|\nabla|)}{|\nabla|}\theta(i\nabla)\rho_R(t,x)|\nabla|^{s-1-\epsilon}P_{\leq N_1}F(u(t)),\\
\nonumber &\hspace{0.8in}\phi_{R}\int_{-\infty}^0\nabla \frac{\sin(-\tau|\nabla|)}{|\nabla|}\theta(i\nabla)\rho_R(t,x)|\nabla|^{s-1-\epsilon}P_{\leq N_1}F(u(\tau))\rangle\\
\nonumber &\hspace{0.2in}- \langle \int_0^\infty \cos(-t|\nabla|)\theta(i\nabla)\rho_R(t,x)|\nabla|^{s-1-\epsilon}P_{\leq N_1}F(u(t)),\\
\nonumber &\hspace{0.8in}\phi_{R}\int_{-\infty}^0\cos(-\tau|\nabla|)\theta(i\nabla)\rho_R(t,x)|\nabla|^{s-1-\epsilon}P_{\leq N_1}F(u(\tau))\rangle\\
\nonumber &=-\langle A_1^+,\phi_{R}A_1^-\rangle-\langle A_1^+,\phi_{R}A_2^-\rangle-\langle A_2^+,\phi_{R}A_1^-\rangle-\langle A_2^+,\phi_{R}A_2^-\rangle\\
\nonumber &\hspace{0.4in}-\langle B_1^+,\phi_{R}B_1^-\rangle-\langle B_1^+,\phi_{R}B_2^-\rangle-\langle B_2^+,\phi_{R}B_1^-\rangle-\langle B_2^+,\phi_{R}B_2^-\rangle\\
\nonumber &\leq \lVert A_1^+\rVert_{L_x^2}\lVert A_1^-\rVert_{L_x^2}+\lVert A_1^+\rVert_{L_x^2}\lVert A_2^-\rVert_{L_x^2}+\lVert A_2^+\rVert_{L_x^2}\lVert A_1^-\rVert_{L_x^2}\\
\nonumber &\hspace{0.4in}+\lVert B_1^+\rVert_{L_x^2}\lVert B_1^-\rVert_{L_x^2}+\lVert B_1^+\rVert_{L_x^2}\lVert B_2^-\rVert_{L_x^2}+\lVert B_2^+\rVert_{L_x^2}\lVert B_1^-\rVert_{L_x^2}\\
&\hspace{0.4in}+|\langle A_2^+,\phi_{R}A_2^-\rangle|+|\langle B_2^+,\phi_{R}B_2^-\rangle|\label{ceq3}
\end{align}
where we have set
\begin{align*}
A_1^+(x)&:=\int_0^\infty \nabla\frac{\sin(-t|\nabla|)}{|\nabla|}\theta(i\nabla)\rho_R(t,x)\sigma_R(t,x)|\nabla|^{s-1-\epsilon}P_{\leq N_1}F(u(t))dt,\\
A_1^-(x)&:=\int_{-\infty}^0\nabla \frac{\sin(-\tau|\nabla|)}{|\nabla|}\theta(i\nabla)\rho_R(\tau,x)\sigma_R(\tau,x)|\nabla|^{s-1-\epsilon}P_{\leq N_1}F(u(\tau))d\tau,\\
A_2^+(x)&:=\int_0^\infty \nabla\frac{\sin(-t|\nabla|)}{|\nabla|}\theta(i\nabla)\rho_R(t,x)(1-\sigma_R(t,x))|\nabla|^{s-1-\epsilon}P_{\leq N_1}F(u(t))dt,\\
A_2^-(x)&:=\int_{-\infty}^0\nabla \frac{\sin(-\tau|\nabla|)}{|\nabla|}\theta(i\nabla)\rho_R(\tau,x)(1-\sigma_R(\tau,x))|\nabla|^{s-1-\epsilon}P_{\leq N_1}F(u(\tau))d\tau,
\end{align*}
as well as
\begin{align*}
B_1^+(x)&:=\int_0^\infty \cos(-t|\nabla|)\theta(i\nabla)\rho_R(t,x)\sigma_R(t,x)|\nabla|^{s-1-\epsilon}P_{\leq N_1}F(u(t))dt,\\
B_1^-(x)&:=\int_{-\infty}^0\cos(-\tau|\nabla|)\theta(i\nabla)\rho_R(\tau,x)\sigma_R(\tau,x)|\nabla|^{s-1-\epsilon}P_{\leq N_1}F(u(\tau))d\tau,\\
B_2^+(x)&:=\int_0^\infty \cos(-t|\nabla|)\theta(i\nabla)\rho_R(t,x)(1-\sigma_R(t,x))|\nabla|^{s-1-\epsilon}P_{\leq N_1}F(u(t))dt,\\
B_2^-(x)&:=\int_{-\infty}^0\cos(-\tau|\nabla|)\theta(i\nabla)\rho_R(\tau,x)(1-\sigma_R(\tau,x))|\nabla|^{s-1-\epsilon}P_{\leq N_1}F(u(\tau))d\tau.
\end{align*}

The remainder of the proof of Lemma $\ref{lem_somenegreg}$ now consists of estimating the norms and inner products appearing in ($\ref{ceq3}$).  We accomplish this in the following three propositions.
\begin{proposition}
\label{lem53}
We have
\begin{align*}
\max \{\lVert A_1^+\rVert_{L_x^2},\lVert A_1^-\rVert_{L_x^2},\lVert B_1^+\rVert_{L_x^2},\lVert B_1^-\rVert_{L_x^2}\}\lesssim R^{-1/2-\beta}
\end{align*}
for every $\beta<\frac{1}{1000}$, provided that $\epsilon>0$ is sufficiently small.
\end{proposition}

\begin{proof}
We argue as in \cite{KVNLW3}.  We show the estimate for $\lVert A_1^+\rVert_{L_x^2}$ and remark that the other estimates are similar.  Using the inhomogeneous Strichartz inequality and the fractional product rule followed by H\"older's inequality, we obtain
\begin{align}
\nonumber &\lVert A_1^+\rVert_{L_x^2}\\
\nonumber &\hspace{0.2in}\leq \lim_{T\rightarrow\infty} \bigg\lVert \int_0^T |\nabla|^{5/4}\frac{\sin(-t|\nabla|)}{|\nabla|}\theta(i\nabla)\rho_R(t,x)\sigma_R(t,x)|\nabla|^{s-\frac{5}{4}-\epsilon}P_{\leq N_1}F(u(t))dt\bigg\rVert_{L_x^2}\\
\nonumber &\hspace{0.2in}\lesssim \left\lVert \nabla \big[\rho_R(t,x)\sigma_R(t,x)|\nabla|^{s-\frac{5}{4}-\epsilon}P_{\leq N_1}F(u(t))\big]\right\rVert_{L_t^{2}L_x^{4/3}(\mathbb{R}\times\mathbb{R}^5)},\\
\nonumber &\hspace{0.2in}\lesssim \left\lVert \big[\nabla[\rho_R\sigma_R]\big]|\nabla|^{s-\frac{5}{4}-\epsilon}P_{\leq N_1}F(u(t))\right\rVert_{L_t^2L_x^{4/3}}\\
\nonumber &\hspace{1.4in}+\lVert \rho_R\sigma_R\nabla|\nabla|^{s-\frac{5}{4}-\epsilon}P_{\leq N_1}F(u(t))\rVert_{L_t^2L_x^{4/3}}\\
\nonumber &\hspace{0.2in}\lesssim \left\lVert \nabla[\rho_R^{1/2}\sigma_R^{1/2}]\right\rVert_{L_t^\infty L_x^5}\lVert\rho_R^{1/2}\sigma_R^{1/2}|\nabla|^{s-\frac{5}{4}-\epsilon}P_{\leq N_1}F(u(t))\rVert_{L_t^2L_x^{20/11}}\\
&\hspace{1.4in}+\lVert \rho_R^{1/2}\sigma_R^{1/2}|\nabla|^{s-\frac{1}{4}-\epsilon}P_{\leq N_1}F(u(t))\rVert_{L_t^2L_x^{4/3}}
\end{align}

Observing the identity $F(x)=F(\tilde{\rho_R}\tilde{\sigma_R}x)+(1-\tilde{\rho_R}^3\tilde{\sigma_R}^3)F(x)$, we obtain
\begin{align}
\label{eqa1}\lVert A_1^+\rVert_{L_x^2}&\lesssim \lVert\rho_R^{1/2}\sigma_R^{1/2}|\nabla|^{s-\frac{5}{4}-\epsilon}P_{\leq N_1}F(\tilde{\rho_R}\tilde{\sigma_R}u(t))\rVert_{L_t^2L_x^{20/11}}\\
\label{eqa2}&\hspace{0.2in}+\lVert\rho_R^{1/2}\sigma_R^{1/2}|\nabla|^{s-\frac{5}{4}-\epsilon}P_{\leq N_1}(1-\tilde{\rho_R}^3\tilde{\sigma_R}^3)F(u(t))\rVert_{L_t^2L_x^{20/11}}\\
\label{eqa3}&\hspace{0.2in}+\lVert \rho_R^{1/2}\sigma_R^{1/2}\nabla|\nabla|^{s-\frac{5}{4}-\epsilon}P_{\leq N_1}F(\tilde{\rho_R}\tilde{\sigma_R}u(t))\rVert_{L_t^2L_x^{4/3}}\\
\label{eqa4}&\hspace{0.2in}+\lVert \rho_R^{1/2}\sigma_R^{1/2}\nabla|\nabla|^{s-\frac{5}{4}-\epsilon}P_{\leq N_1}(1-\tilde{\rho_R}^3\tilde{\sigma_R}^3)F(u(t))\rVert_{L_t^2L_x^{4/3}}.
\end{align}

The rest of the proof of Proposition $\ref{lem53}$ is devoted to estimating the terms ($\ref{eqa1}$)-($\ref{eqa4}$).  To estimate ($\ref{eqa1}$) and ($\ref{eqa3}$), we perform a dyadic decomposition in time.  The advantage of this decomposition arises out of the following lemma, which allows us to obtain uniform bounds on a localized Strichartz norm of the solution $u$.  
\begin{lemma}
\label{lem_a3}
There exists $R_0>0$ and $C>0$ such that 
\begin{align*}
\lVert \nabla(\tilde{\rho_R}\tilde{\sigma_R}u)\rVert_{L_t^3 (I;L_x^{3})}\leq C
\end{align*}
for every $R>R_0$ and $I\subset \mathbb{R}$ of the form $I=[-\frac{10}{\delta}R,\frac{10}{\delta}R]$ or $I=[T,2T]$ with $T\geq \frac{10}{\delta}R$.
\end{lemma}
We remark that Lemma $\ref{lem_a3}$ is obtained as in Lemma $7.2$ of \cite{KVNLW3} with an additional derivative which is accounted for by choosing the space-time norm in accordance with the Strichartz inequality.  The proof is based on the small data global theory, finite speed of propagation, and the subluminality result, Theorem $\ref{thm_sublum}$, after observing that $\nabla$ is a local operator and therefore behaves well with respect to the finite speed of propagation.  

With Lemma $\ref{lem_a3}$ in hand, we return to the task of estimating $(\ref{eqa1})$ and $(\ref{eqa3})$.  We set $T_0=0$, $T_j=\frac{10}{\delta}R2^{j-1}$, $j\geq 1$ and use Lemma $\ref{lem_cutoffs}$ (v) along with the Sobolev inequality and the decomposition 
\begin{align*}
\tilde{\rho_R}\tilde{\sigma_R}u(t)&=P_{<8N_1}(\tilde{\rho_R}\tilde{\sigma_R}u(t))+P_{\geq 8N_1}(\tilde{\rho_R}\tilde{\sigma_R}u(t))
\end{align*}
to obtain
\begin{align}
\nonumber &(\ref{eqa1})+(\ref{eqa3})\\
\nonumber &\hspace{0.2in}\lesssim \sum_{j=0}^\infty \lVert P_{<N_1}|\nabla|^{s-\epsilon-\frac{1}{4}}F(\tilde{\rho_R}\tilde{\sigma_R}u(t))\rVert_{L_t^2(I_j;L_x^{4/3})}\\
\nonumber &\hspace{0.2in}\lesssim \sum_{j=0}^\infty \bigg[\lVert |\nabla|^{s-\epsilon-\frac{1}{4}} [P_{<8N_1}(\tilde{\rho_R}\tilde{\sigma_R}u(t))]^3\rVert_{L_t^2(I_j;L_x^{4/3})}\\
\nonumber &\hspace{0.6in}+\lVert |\nabla|^{s-\epsilon-\frac{1}{4}} [P_{<8N_1}(\tilde{\rho_R}\tilde{\sigma_R}u(t))][P_{\geq 8N_1}(\tilde{\rho_R}\tilde{\sigma_R}u(t))]\tilde{\rho_R}\tilde{\sigma_R}u(t)\rVert_{L_t^2(I_j;L_x^{4/3})}\\
\nonumber &\hspace{0.6in}+\lVert P_{<N_1}|\nabla|^{s-\epsilon-\frac{1}{4}} [P_{\geq 8N_1}(\tilde{\rho_R}\tilde{\sigma_R}u(t))]^3\rVert_{L_t^2(I_j;L_x^{4/3})}\bigg]\\
\label{conclusum}&\hspace{0.2in}=:\sum_{j=0}^\infty (I)_j+(II)_j+(III)_j
\end{align}
with $I_j=[T_j,T_{j+1}]$.  In the interest of simplifying notation for the rest of the proof, all $L_t^q$ norms in the subsequent argument will be on the interval $I_j$.

To bound $(I)_j$, we consider two cases: $s>\frac{5}{4}$ and $s\leq \frac{5}{4}$.  If $s>\frac{5}{4}$ we use the H\"older inequality in time followed by the fractional product rule, interpolation, $(\ref{eqrem2})$, Proposition $\ref{prop2}$, Proposition $\ref{prop_quant}$, and the Bernstein inequalities (provided $\epsilon<\frac{7}{150}$) to obtain
\begin{align}
\nonumber (I)_j&\lesssim (2^jR)^{\frac{3}{14}}\lVert |\nabla|^{s-\epsilon-\frac{1}{4}} [P_{\leq 8N_1}(\tilde{\rho_R}\tilde{\sigma_R}u(t))]^3\rVert_{L_t^{7/2}L_x^{4/3}}\\
\nonumber &\lesssim (2^jR)^{\frac{3}{14}}\lVert P_{\leq 8N_1}(\tilde{\rho_R}\tilde{\sigma_R}u(t))\rVert_{L_t^\infty L_x^5}^{199/100}\lVert P_{\leq 8N_1}(\tilde{\rho_R}\tilde{\sigma_R}u(t))\rVert_{L_t^\infty L_x^{3}}^{1/100}\\
\nonumber &\hspace{0.2in}\cdot\lVert |\nabla|^{s-\epsilon-\frac{1}{4}} P_{\leq 8N_1}(\tilde{\rho_R}\tilde{\sigma_R}u(t))\rVert_{L_{t,x}^3}^{6/7}\lVert |\nabla|^{s-\epsilon-\frac{1}{4}} P_{\leq 8N_1}(\tilde{\rho_R}\tilde{\sigma_R}u(t))\rVert_{L_{t}^\infty L_x^{1500/661}}^{1/7}\\
\nonumber &\lesssim_u [R(1+2^j)]^{\frac{3}{14}-\frac{199}{100}(\frac{3}{8}-\frac{1}{100})}(8N_1)^{s-\epsilon-\frac{5}{4}}(8N_1)^{\frac{1}{150}-\frac{\epsilon}{7}}\\
\nonumber &\hspace{0.2in}\cdot\lVert \nabla(\tilde{\rho_R}\tilde{\sigma_R}u(t))\rVert_{L_{t,x}^3}^{6/7}\lVert |\nabla|^{s-\epsilon-\frac{1}{4}}(\tilde{\rho_R}\tilde{\sigma_R}u(t))\rVert_{L_{t}^\infty L_x^{\frac{20}{9-4\epsilon}}}^{1/7}\\
\nonumber &\lesssim_{u,N_1} [R(1+2^j)]^{-\frac{51}{100}}\lVert |\nabla|^{s} (\tilde{\rho_R}\tilde{\sigma_R}u(t))\rVert_{L_{t}^\infty L_x^{2}}^{1/7}\\
\nonumber &\lesssim_{u,N_1} [R(1+2^j)]^{-\frac{51}{100}}\lVert |\nabla|^su(t)\rVert_{L_{t}^\infty L_x^{2}}^{1/7}\\
\label{conclu1}&\lesssim_{u,N_1} [R(1+2^j)]^{-\frac{51}{100}},
\end{align}
where to obtain the fourth inequality we use $R\geq 1$, Lemma $\ref{lem_a3}$ and the Sobolev embedding.  Moreover, in obtaining the second to last inequality, we invoke the fractional product rule, Sobolev embedding and the bounds ($\ref{dbound1}$) and ($\ref{dbound2}$) to obtain the estimate
\begin{align}
\nonumber \lVert |\nabla|^{s} (\tilde{\rho_R}\tilde{\sigma_R}u(t))\rVert_{L_{t}^\infty L_x^{2}}&\lesssim \lVert |\nabla|^{s} (\tilde{\rho_R}\tilde{\sigma_R})\rVert_{L_t^\infty L_x^{\frac{5}{s}}}\lVert u(t)\rVert_{L_{t}^\infty L_x^{\frac{10}{5-2s}}}\\
\nonumber &\hspace{0.2in}+\lVert (\tilde{\rho_R}\tilde{\sigma_R})\rVert_{L_{t,x}^\infty}\lVert |\nabla|^su(t)\rVert_{L_{t}^\infty L_x^{2}}\\
\label{eqchiu}&\lesssim \lVert |\nabla|^su(t)\rVert_{L_{t}^\infty L_x^{2}}.
\end{align}

It now remains to consider the case $s\leq \frac{5}{4}$.  Towards this end, fixing $w\in (\frac{10(3+2s)}{15+4s},5]$, we proceed in a similar manner as above: using H\"older in time, followed by the fractional product rule, interpolation, Proposition $\ref{prop2}$, Proposition $\ref{prop_quant}$, ($\ref{eqrem2}$), and the Bernstein and Sobolev inequalities, we obtain
\begin{align}
\nonumber (I)_j&\lesssim (2^jR)^{\frac{3(3-2s)}{2(11-6s)}}\lVert |\nabla|^{s-\epsilon-\frac{1}{4}} [P_{\leq 8N_1}(\tilde{\rho_R}\tilde{\sigma_R}u(t))]^3\rVert_{L_t^{11-6s}L_x^{4/3}}\\
\nonumber &\lesssim (2^jR)^{\frac{3(3-2s)}{2(11-6s)}}\lVert P_{\leq 8N_1}(\tilde{\rho_R}\tilde{\sigma_R}u(t))\rVert_{L_t^\infty L_x^5}^{199/100}\lVert P_{\leq 8N_1}(\tilde{\rho_R}\tilde{\sigma_R}u(t))\rVert_{L_t^\infty L_x^{3}}^{1/100}\\
\nonumber &\hspace{0.2in}\cdot\lVert |\nabla|^{s-\epsilon-\frac{1}{4}} P_{\leq 8N_1}(\tilde{\rho_R}\tilde{\sigma_R}u(t))\rVert_{L_{t}^3L_x^{\sigma_1}}^{\frac{3}{11-6s}}\lVert |\nabla|^{s-\epsilon-\frac{1}{4}} P_{\leq 8N_1}(\tilde{\rho_R}\tilde{\sigma_R}u(t))\rVert_{L_{t}^\infty L_x^{\sigma_2}}^{\frac{2(4-3s)}{11-6s}}\\
\nonumber &\lesssim_u (2^jR)^{\frac{3(3-2s)}{2(11-6s)}+\frac{199}{100}(-\frac{8(5-w)}{5(12-w)})}(8N_1)^{\frac{2(4-3s)}{11-6s}(\frac{9}{4}-\epsilon-\frac{5}{\sigma_2})}\\
\nonumber &\hspace{0.2in}\cdot\lVert |\nabla|^{s-\epsilon-\frac{1}{4}} P_{\leq 8N_1}(\tilde{\rho_R}\tilde{\sigma_R}u(t))\rVert_{L_{t}^3L_x^{\sigma_1}}^{\frac{3}{11-6s}}\lVert |\nabla|^{s-\epsilon-\frac{1}{4}} P_{\leq 8N_1}(\tilde{\rho_R}\tilde{\sigma_R}u(t))\rVert_{L_{t}^\infty L_x^{\frac{20}{9-4\epsilon}}}^{\frac{2(4-3s)}{11-6s}}\\
\nonumber &\lesssim_{u,N_1} (2^jR)^{\frac{3(3-2s)}{2(11-6s)}+\frac{199}{100}(-\frac{8(5-w)}{5(12-w)})}\\
\nonumber &\hspace{0.2in}\cdot\lVert \nabla(\tilde{\rho_R}\tilde{\sigma_R}u(t))\rVert_{L_{t,x}^3}^{\frac{3}{11-6s}}\lVert |\nabla|^{s} (\tilde{\rho_R}\tilde{\sigma_R}u(t))\rVert_{L_{t}^\infty L_x^{2}}^{\frac{2(4-3s)}{11-6s}}\\
\nonumber &\lesssim_{u,N_1} (2^jR)^{\frac{3(3-2s)}{2(11-6s)}+\frac{199}{100}(-\frac{8(5-w)}{5(12-w)})}\lVert |\nabla|^su(t))\rVert_{L_{t}^\infty L_x^{2}}^{\frac{2(4-3s)}{11-6s}}\\
\label{conclu2}&\lesssim_{u,N_1} [R(1+2^j)]^{-\frac{501}{1000}}.
\end{align}
for $\epsilon<\frac{1}{150}$, where we have set
\begin{align*}
\sigma_1=\tfrac{60}{5+12s-12\epsilon}\quad\textrm{and}\quad\sigma_2=\tfrac{1500(4-3s)}{2689-2019s+450\epsilon},
\end{align*}
chosen $w$ sufficiently close to $\frac{10(3+2s)}{15+4s}$, and recalled that we have $R\geq 1$.  We remark that in order to apply Bernstein in the third inequality, we have observed $\sigma_2>\frac{20}{9-4\epsilon}$ when $\epsilon<\frac{1}{150}$ and $s\leq \frac{5}{4}$.  We also note that to obtain the fifth inequality, we used Lemma $\ref{lem_a3}$ and ($\ref{eqchiu}$).

It now remains to estimate the terms $(II)_j$ and $(III)_j$.  For this, we use H\"older in time and space, followed by interpolation and the Bernstein inequalities to obtain
\begin{align}
\nonumber &(II)_j+(III)_j\\
\nonumber &\hspace{0.4in}\lesssim N_1^{s-\epsilon-\frac{1}{4}}(2^jR)^{\frac{1}{5}}\lVert \tilde{\rho_R}\tilde{\sigma_R}u(t)\rVert_{L_t^\infty L_x^5}^2\lVert P_{\geq 8N_1}(\tilde{\rho_R}\tilde{\sigma_R}u(t))\rVert_{L_t^{10/3}L_x^{20/7}}\\
\nonumber &\hspace{0.4in}\lesssim_u N_1^{s-\epsilon-\frac{1}{4}}(2^jR)^{\frac{1}{5}}\lVert \tilde{\rho_R}\tilde{\sigma_R}u(t)\rVert_{L_t^\infty L_x^5}^{2}\\
\nonumber &\hspace{0.6in}\cdot \lVert P_{\geq 8N_1}(\tilde{\rho_R}\tilde{\sigma_R}u(t))\rVert_{L_{t,x}^3}^{9/10}\lVert P_{\geq 8N_1}(\tilde{\rho_R}\tilde{\sigma_R}u(t))\rVert_{L_t^\infty L_x^{2}}^{1/10}\\
\nonumber &\hspace{0.4in}\lesssim_u N_1^{\frac{9s}{10}-\frac{23}{20}-\epsilon}[R(1+2^j)]^{-\frac{23}{45}}\lVert \nabla (\tilde{\rho_R}\tilde{\sigma_R}u(t))\rVert_{L_{t,x}^3}^{9/10}\lVert |\nabla|^s(\tilde{\rho_R}\tilde{\sigma_R}u(t))\rVert_{L_t^\infty L_x^{2}}^{1/10}\\
&\hspace{0.4in}\lesssim_{u,N_1} [R(1+2^j)]^{-\frac{23}{45}},\label{twoandthree}
\end{align}
where we have used $(\ref{eqrem2})$ and Proposition $\ref{prop_quant}$ with $w=3$ to obtain the second to last inequality, and Lemma $\ref{lem_a3}$ and ($\ref{eqchiu}$) to obtain the last inequality.

Combining $(\ref{conclu1})$-$(\ref{conclu2})$ and $(\ref{twoandthree})$ with ($\ref{conclusum}$), we obtain
\begin{align}
\label{conclu1a}(\ref{eqa1})+(\ref{eqa3})&\lesssim_{u,N_1} R^{-\frac{501}{1000}}
\end{align}
whenever $\epsilon$ is sufficiently small.

We now estimate ($\ref{eqa2}$) and ($\ref{eqa4}$).  Note that by the Sobolev inequality, we have
\begin{align}
(\ref{eqa2})+(\ref{eqa4})&\lesssim \lVert \rho_R^{1/2}\sigma_R^{1/2}|\nabla|^{s-\epsilon-\frac{1}{4}}P_{\leq N_1}(1-\tilde{\rho}_R^3\tilde{\sigma}_R^3)F(u(t))\rVert_{L_t^2L_x^{4/3}}\\
&\lesssim \lVert (|t|+R)^{-s+\epsilon-1}(1-\tilde{\rho}_R^3\tilde{\sigma}_R^3)F(u(t))\rVert_{L_t^2L_x^1}\\
\nonumber &\lesssim \lVert (|t|+R)^{-s+\epsilon-1}\rVert_{L_t^2}\lVert F(u(t))\rVert_{L_t^\infty L_x^1}\\
\nonumber &\lesssim_u R^{-\frac{1}{2}-(s-\epsilon)}\\
\label{conclu3}&\lesssim_u R^{-\frac{1}{2}-\beta}
\end{align}
whenever $\beta<s-\epsilon$, where we have invoked Lemma $\ref{lem_mismatch}$ with $\sigma=s-\epsilon-\frac{1}{4}$, $A=C_1(|t|+R)$, $q=\frac{4}{3}$ and $p=1$.  The claim follows by combining $(\ref{conclu1a})$ and $\ref{conclu3}$ and observing that for $\epsilon<\frac{1}{2}$, $\beta<\frac{1}{1000}$ implies $\beta<\frac{1}{2}<s-\epsilon$.  This completes the proof of Proposition $\ref{lem53}$.
\end{proof}

\begin{proposition}
\label{lem54}
We have the bounds
\begin{align*}
\max \{\lVert A_2^+\rVert_{L_x^2},\lVert A_2^-\rVert_{L_x^2},\lVert B_2^+\rVert_{L_x^2},\lVert B_2^-\rVert_{L_x^2}\}\lesssim R^{\frac{1}{2}}.
\end{align*}
\end{proposition}

\begin{proof}
The proof proceeds as in the proof of Lemma $7.4$ of \cite{KVNLW3}.  We show the estimate for $\lVert A_2^+\rVert_{L_x^2}$ and remark that the other estimates are similar.  Setting $I_R=[\frac{R}{2},\frac{10R}{\delta}]$, we use the inhomogeneous Strichartz inequality followed by the fractional product rule, H\"older's inequality, and the Sobolev and Bernstein inequalities to obtain
\begin{align}
\nonumber \lVert A_2^+\rVert_{L_x^2}&\leq \lim_{T\rightarrow\infty} \bigg\lVert \int_0^T \nabla\frac{\sin(-t|\nabla|)}{|\nabla|}\rho_R(t,x)(1-\sigma_R(t,x))|\nabla|^{s-1-\epsilon}P_{\leq N_1}F(u(t))dt\bigg\rVert_{L_x^2}\\
\nonumber&\lesssim \lVert \nabla [\rho_R(t,x)(1-\sigma_R(t,x))|\nabla|^{s-1-\epsilon}P_{\leq N_1}F(u(t))]\rVert_{L_t^{2}L_x^{5/4}(I_R\times\mathbb{R}^5)}\\
\nonumber&\lesssim \lVert \nabla[\rho_R(t,x)(1-\sigma_R(t,x))]\rVert_{L_t^\infty L_x^5(I_R\times\mathbb{R}^5)}\lVert |\nabla|^{s-1-\epsilon}P_{\leq N_1}F(u(t))\rVert_{L_t^2L_x^{5/3}(I_R\times \mathbb{R}^5)}\\
\nonumber &\hspace{0.2in}+\lVert \rho_R(1-\sigma_R)\rVert_{L_t^\infty L_x^\infty}\lVert |\nabla|^{s-\epsilon}P_{\leq N_1}F(u(t))\rVert_{L_t^2L_x^{5/4}(I_R\times \mathbb{R}^5)}\\
\nonumber &\lesssim \lVert |\nabla|^{s-\epsilon}P_{\leq N_1}F(u(t))\rVert_{L_t^2L_x^{5/4}(I_R\times \mathbb{R}^5)}\\
\nonumber &\lesssim \lVert F(u(t))\rVert_{L_t^\infty L_x^{5/4}}|I_R|^\frac{1}{2}\\
\nonumber &\lesssim\lVert u\rVert_{L_t^\infty L_x^{15/4}}^3R^\frac{1}{2}\\
\nonumber&\lesssim R^\frac{1}{2}.
\end{align}
where we have used $\ref{eqrem2}$ interpolated with the a priori bound $(u,u_t)\in L_t^\infty(\dot{H}_x^{3/2}\times\dot{H}_x^{1/2})$ to obtain the last inequality. 
\end{proof}

To estimate the last term in ($\ref{ceq3}$), we will make use of the following weak diffraction lemma, the proof of which we give in Appendix A.  This result is the analogue of the weak diffraction property presented in \cite{KVNLW3}, adapted to the dimension $d=5$ setting.
\begin{lemma} (Weak diffraction)\label{thm_weakdiff}
Fix $\phi\in C^\infty(\mathbb{R}^5;[0,1])$ such that $\phi(x)=1$ for $|x|<1$ and $\phi(x)=0$ for $|x|>2$.  Then there exists $C>0$ such that if $F,G:\mathbb{R}\times\mathbb{R}^5\rightarrow\mathbb{R}$ are given along with $R_0,C_1,C_2>0$ such that for every $R\geq R_0$ we have
\begin{align*}
&\supp F\times \supp G\subset \\
&\hspace{.5in}\bigg\{\big((t,x),(\tau,y)\big):|t|+|\tau|+|x|+|y|\leq C_1R, |t-\tau|-|x-y|\geq C_2R\bigg\},
\end{align*}
then for every $R\geq R_0$ and $x_0\in\mathbb{R}^5$ we have
\begin{align}
|I(F,G,R)|&\leq CR^{-1/26}\lVert F\rVert_{L_t^\infty L_x^1}\lVert G\rVert_{L_t^\infty L_x^1}\label{goal1}
\end{align}
where
\begin{align*}
I(F,G,R)&:=\int_{\mathbb{R}}\int_{\mathbb{R}} \langle \nabla\frac{\sin(t|\nabla|)}{|\nabla|}\theta(i\nabla)F(t),\phi(\frac{\cdot-x_0}{R})\nabla\frac{\sin(\tau|\nabla|)}{|\nabla|}\theta(i\nabla)G(\tau)\rangle d\tau dt\\
\nonumber &\hspace{0.2in}+\int_{\mathbb{R}}\int_{\mathbb{R}} \langle \cos(t|\nabla|)\theta(i\nabla)F(t),\phi(\frac{\cdot-x_0}{R})\cos(\tau|\nabla|)\theta(i\nabla)G(\tau)\rangle d\tau dt.
\end{align*}
where $\theta$ is defined in ($\ref{eq_theta}$).
\end{lemma}

\begin{proposition}
We have 
\begin{align*}
|\langle A_2^+,\phi_{R}A_2^-\rangle+\langle B_2^+,\phi_{R}B_2^-\rangle|\lesssim R^{-\beta}
\end{align*}
for every $\beta<\frac{1}{26}$.
\label{lem55}
\end{proposition}

\begin{proof}
The proof proceeds as in the proof of Lemma $7.5$ of \cite{KVNLW3}.  We apply Lemma $\ref{thm_weakdiff}$ with 
\begin{align*}
\tilde{F}(t)=\rho_R(t,x)(1-\sigma_R(t,x))|\nabla|^{s-1-\epsilon}P_{\leq N_1}F(u(t)),\\ G(\tau)=\rho_R(\tau,x)(1-\sigma_R(\tau,x))|\nabla|^{s-1-\epsilon}P_{\leq N_1}F(u(\tau)).
\end{align*}
Note that the hypotheses of the lemma imply that $F$ and $G$ have the required support.  We therefore conclude
\begin{align*}
|\langle A_2^+,\phi_{R}A_2^-\rangle+\langle B_2^+,\phi_{R}B_2^-\rangle|
&\lesssim R^{-1/26}\lVert \tilde{F}\rVert_{L_t^\infty L_x^1}\lVert G\rVert_{L_t^\infty L_x^1}\\
&\lesssim R^{-1/26}\lVert F(u)\rVert_{L_t^\infty L_x^1}^2\\
&\lesssim R^{-1/26},
\end{align*}
where we have used the Bernstein inequality along with the condition $s-1-\epsilon\geq 0$ to obtain the second inequality, and Proposition $\ref{prop2}$ to obtain the third inequality.  Then $R\geq 1$ implies $R^{-1/26}\leq R^{-\beta}$ for every $\beta<\frac{1}{26}$, which gives the desired result, completing the proof of Proposition $\ref{lem55}$.
\end{proof}

We are now ready to complete the proof of Lemma $\ref{lem_somenegreg}$.  In particular, collecting Proposition $\ref{lem53}$, Proposition $\ref{lem54}$ and Proposition $\ref{lem55}$, and invoking ($\ref{ceq3}$) we obtain
\begin{align*}
&\lVert P_{\leq N_1}|\nabla|^{s-\epsilon}u(0)\rVert_{L_x^2(B_R)}+\lVert P_{\leq N_1}|\nabla|^{s-1-\epsilon} u_t(0)\rVert_{L_x^2(B_R)}\\
&\hspace{0.2in}\lesssim 2\bigg(R^{-2(\frac{1}{2}+\beta)}+2R^{-\frac{1}{2}-\beta}R^{\frac{1}{2}}+R^{-\beta}\bigg)\\
&\hspace{0.2in}\lesssim R^{-\beta/2}.
\end{align*}
whenever $\epsilon$ and $\beta$ are sufficiently small, which concludes the proof of Lemma $\ref{lem_somenegreg}$.
\end{proof}

\begin{proof}[Proof of Theorem $\ref{thm_negreg}$.]
Iteratively apply Lemma $\ref{lem_somenegreg}$, starting with $s=3/2$, to obtain $(u,u_t)\in L_t^\infty(\dot{H}_x^{s-\epsilon}\times\dot{H}_x^{s-\epsilon-1})$, with $\epsilon$ sufficiently small to satisfy the hypotheses of Lemma $\ref{lem_somenegreg}$.  The claim then follows after finitely many iterations.
\end{proof}

\section{Proof of Theorem $\ref{thm1}$.}

In this section we conclude the proof of Theorem $\ref{thm1}$ by precluding each of the scenarios identified in Theorem $\ref{thm_apred}$.  We begin with the finite time blow-up solution.  As we mentioned in the introduction, the argument used to rule out this scenario in high dimensions $d\geq 6$ in \cite{BulutCubic} is also applicable to the present case.  In particular, we outline the arguments used in the proof below.
\begin{proposition}[Finite time blow-up solution]
\label{prop_fintime}
There is no solution $u:I\times\mathbb{R}^5\rightarrow\mathbb{R}$ with maximal interval of existence $I$ satisfying the conditions of a finite time blow-up solution as in Theorem $\ref{thm_apred}$.
\end{proposition}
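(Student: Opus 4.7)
The plan is to argue by contradiction. Suppose $u$ is a finite-time blow-up solution in the sense of Theorem~\ref{thm_apred}; by time-reversal symmetry I may assume $T^{\ast} := \sup I < \infty$. First, I would establish the two preliminary facts that $N(t) \to \infty$ and $x(t) \to x^{\ast}$ as $t \to T^{\ast}$. The first is standard: if $N(t_n)$ remained bounded along a sequence $t_n \to T^{\ast}$, then almost periodicity combined with the small-data Cauchy theory and stability would extend $u$ past $T^{\ast}$. The second follows from a finite-speed-of-propagation bound on $x(t)$ in the spirit of Lemma~\ref{lem_a2}, using that $N(t)^{-1} \to 0$ to conclude that $(x(t))$ is Cauchy as $t \to T^{\ast}$.

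Next, I would show that after translating so that $x^{\ast} = 0$, the solution has shrinking compact support:
\[
\supp u(t, \cdot) \cup \supp u_t(t, \cdot) \subset \overline{B(0, T^{\ast} - t)}
\]
for all $t$ sufficiently close to $T^{\ast}$. This follows from the classical approximation strategy: fix $t_0 < T^{\ast}$ and $y \notin \overline{B(0, T^{\ast} - t_0)}$; for any $\eta > 0$, split the Cauchy data of $u$ at a time $t \in (t_0, T^{\ast})$ into a ``core'' supported in $B(x(t), C(\eta)/N(t))$ and a ``tail'' of $\dot{H}^{3/2}_x \times \dot{H}^{1/2}_x$ norm $\leq \eta$. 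By finite speed of propagation for the core and the small-data theory for the tail, $(u(t_0,\cdot),u_t(t_0,\cdot))$ has $\dot{H}^{3/2}_x \times \dot{H}^{1/2}_x$ norm $\lesssim \eta$ near $y$; sending $t \to T^{\ast}$ and then $\eta \to 0$ gives the claim.

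With compact support in hand, I would upgrade the critical control to finite energy. Writing $r := T^{\ast} - t$ and using Sobolev embedding $\dot{H}^{3/2}_x(\mathbb{R}^5) \hookrightarrow L^{5}_x$ together with H\"older's inequality on $B(0,r)$,
\[
\|u(t)\|_{L^2_x} \lesssim r^{3/2} \|u(t)\|_{L^5_x} \lesssim_u r^{3/2},
\]
and similarly $\|u_t(t)\|_{L^2_x} \lesssim_u r$ via $\dot{H}^{1/2}_x \hookrightarrow L^{10/3}_x$. Interpolating between $L^2_x$ and $\dot{H}^{3/2}_x$ yields $\|u(t)\|_{\dot{H}^1_x} \lesssim_u r^{1/2}$, and H\"older on the support again gives $\|u(t)\|_{L^4_x}^{4} \lesssim_u r$. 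In particular, $E(u(t),u_t(t)) \to 0$ as $t \to T^{\ast}$. Once finite energy at some time $t_0 < T^{\ast}$ is available, persistence of regularity for the energy-critical theory ensures that $u$ stays in the energy class on $[t_0, T^{\ast})$ and that the energy is conserved there; combined with the decay just shown, this forces $E(u(t_0),u_t(t_0)) = 0$, hence $u \equiv 0$ on $[t_0, T^{\ast})$, contradicting $\|u\|_{L^{6}_{t,x}(I \times \mathbb{R}^5)} = \infty$.

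The only delicate step is the shrinking-support conclusion, since $u$ is only at critical regularity and finite speed of propagation must be applied through the truncation/limiting procedure sketched above; the quantitative use of almost periodicity is essential precisely here. This part of the argument is, however, insensitive to dimension and coincides with its higher-dimensional counterpart in \cite{BulutCubic} (and with the $d=3$ version in \cite{KVNLW3}), so no new ideas are needed in the present setting.
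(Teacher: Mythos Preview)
Your proposal is correct and follows essentially the same route as the paper's proof: shrinking support via almost periodicity and finite speed of propagation, then H\"older/Sobolev to force $E(u(t),u_t(t))\to 0$, then energy conservation to conclude $u\equiv 0$. One small slip: in $\mathbb{R}^5$ the embedding is $\dot{H}^{1/2}_x\hookrightarrow L^{5/2}_x$ (not $L^{10/3}_x$), which yields $\lVert u_t(t)\rVert_{L^2_x}\lesssim_u r^{1/2}$ rather than $r$; this does not affect the conclusion. Also, the appeal to ``the energy-critical theory'' is a misnomer here (the equation is energy-supercritical), but what you actually need---and use---is just conservation of energy once the energy is finite, which is exactly what the paper invokes.
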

\begin{proof}
We argue as in \cite{BulutCubic}; see also \cite{BulutRadial,KMESupercriticalNLW,KVNLW3,KVNLWradial}.  Without loss of generality (using the time translation and time reversal symmetries), suppose that $\sup I=1$.  The first step is to show that there exists $x_0\in \mathbb{R}^5$ such that
\begin{align}
\supp u(t),\quad \supp u_t(t)\subset \overline{B(x_0,1-t)}\label{support}
\end{align}
for all $t\in I$.  This is accomplished by making use of the finite speed of propagation along with the definition of almost periodicity (for details, we refer the reader to the proof of Lemma $6.2$ in \cite{BulutCubic}, and the references cited therein).

We now estimate the energy, using ($\ref{support}$), H\"older's inequality and the Sobolev embedding to obtain
\begin{align*}
E(u(t),u_t(t))&=\int_{|x-x_0|\leq 1-t} \frac{1}{2}|\nabla u(t)|^2+\frac{1}{2}|u_t(t)|^2+\frac{1}{4}|u(t)|^4dx\\
&\lesssim (1-t)[\lVert \nabla u\rVert_{L_t^\infty L_x^{5/2}}^2+\lVert u_t\rVert_{L_t^\infty L_x^{5/2}}^2+\lVert u\rVert_{L_t^\infty L_x^5}^4]\\
&\lesssim (1-t)\lVert (u,u_t)\rVert_{L_t^\infty(\dot{H}_x^{3/2}\times\dot{H}_x^{1/2})}^2\\
&\lesssim (1-t)
\end{align*}
where we have used the a priori bound $(u,u_t)\in L_t^\infty(\dot{H}_x^{3/2}\times\dot{H}_x^{1/2})$.  Letting $t\rightarrow 1$ and using the conservation of energy, we obtain
\begin{align*}
E(u(0),u_t(0))=0,
\end{align*}
which gives $u\equiv 0$, contradicting $\lVert u\rVert_{L_{t,x}^6}=\infty$.
\end{proof}

We now turn to the two global scenarios identified in Theorem $\ref{thm_apred}$: the soliton-like solution and the low-to-high frequency cascade solution.  We remark that the essential ingredient in precluding these scenarios is Theorem $\ref{thm_negreg}$.  We begin with the soliton-like solution.
\begin{proposition}[Soliton-like solution]
\label{prop_sol}
There is no solution $u:\mathbb{R}\times\mathbb{R}^5\rightarrow\mathbb{R}$ satisfying the conditions of a soliton-like solution as in Theorem $\ref{thm_apred}$.
\end{proposition}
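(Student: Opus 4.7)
The plan is to rule out the soliton-like scenario by producing a contradiction between the Morawetz estimate (made available by finite energy) and a lower bound on the Morawetz integrand coming from almost periodicity and subluminality. The first step is to invoke Theorem $\ref{thm_negreg}$, which gives $(u,u_t)\in L_t^\infty(\mathbb{R};\dot{H}_x^1\times L_x^2)$, so the energy $E(u(t),u_t(t))$ is finite and, by conservation, equal to $E(u(0),u_t(0))$. This grants access to the Morawetz estimate ($\ref{eq_morawetz}$) centered at any point $x_0\in\mathbb{R}^5$:
\begin{align*}
\int_\mathbb{R}\int_{\mathbb{R}^5}\frac{|u(t,x)|^4}{|x-x_0|}\,dx\,dt\lesssim_u 1.
\end{align*}

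Next, I would use almost periodicity combined with the soliton property $N(t)\equiv 1$ and the fact that $u$ is nontrivial (it is a blow-up solution, so in particular not identically zero) to extract a uniform lower bound on the local potential energy: there exist $R>0$ and $c>0$ such that
\begin{align*}
\int_{|x-x(t)|\leq R}|u(t,x)|^4\,dx\geq c\quad\text{for all }t\in\mathbb{R}.
\end{align*}
This is the standard concentration-of-potential-energy statement (of the same flavor as ($\ref{eq2a}$)), which holds because if the left-hand side could be made arbitrarily small for some sequence of times, almost periodicity would force the $\dot{H}_x^{3/2}\times\dot{H}_x^{1/2}$ norm of $(u,u_t)$ to vanish along that sequence, contradicting minimality.

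Then I would apply Theorem $\ref{thm_sublum}$ with $x_0=x(0)$: there exists $\delta>0$ such that $|x(t)-x(0)|\leq (1-\delta)|t|$ for $|t|\geq \delta^{-1}$. For such $t$ and any $x$ with $|x-x(t)|\leq R$, the triangle inequality gives
\begin{align*}
|x-x(0)|\leq R+(1-\delta)|t|,
\end{align*}
so that
\begin{align*}
\int_{\mathbb{R}^5}\frac{|u(t,x)|^4}{|x-x(0)|}\,dx\geq \frac{c}{R+(1-\delta)|t|}.
\end{align*}
Integrating this bound over $|t|\geq \delta^{-1}$ produces a logarithmically divergent integral, directly contradicting the finite bound obtained from Morawetz. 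This closes the argument.

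The main obstacle was building up the two ingredients that make this short final contradiction possible: the finiteness of energy in Theorem $\ref{thm_negreg}$ (which required the improved integrability of Section 3 and the iterative double-Duhamel argument of Section 5) and the subluminality of $x(t)$ in Theorem $\ref{thm_sublum}$ (which required the frequency-localized energy-flux and potential-energy concentration bounds of Section 4). Once those are in hand, the preclusion of the soliton-like scenario is a two-line consequence, so the only thing to watch in writing this up carefully is to verify that the concentration lower bound really is uniform in $t$ (using almost periodicity and $N(t)=1$) and that the subluminality constant $\delta$ and the concentration radius $R$ can be chosen independently of $t$.
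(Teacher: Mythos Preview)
Your argument is correct and follows essentially the same line as the paper: finite energy from Theorem~\ref{thm_negreg} gives the Morawetz bound, while the uniform concentration of potential energy (as in (\ref{eq2a}) with $N(t)=1$) forces the Morawetz integrand to diverge logarithmically in time, yielding the contradiction.

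The one difference worth noting is that you invoke subluminality (Theorem~\ref{thm_sublum}) to control $|x(t)-x(0)|$, whereas the paper uses only the cruder finite-speed-of-propagation bound from Lemma~\ref{lem_a2}, namely $|x(t)-x(0)|\leq |t|+C$. This weaker bound already suffices: on the region $|x-x(t)|\leq R$ one has $|x-x(0)|\leq R+|t|+C$, and $\int_0^T \frac{c}{R+t+C}\,dt$ still diverges logarithmically as $T\to\infty$. So subluminality is not actually needed for this step, and your closing remark that it is one of the ``two ingredients that make this short final contradiction possible'' overstates its role here. (Subluminality is genuinely essential elsewhere, in the proof of Theorem~\ref{thm_negreg} via Lemma~\ref{lem_cutoffs}, but not directly in the Morawetz contradiction.)
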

\begin{proof}
The proof proceeds as in \cite{KVNLW3}; see also Section $8$ of \cite{BulutCubic}.  Suppose for contradiction that such a solution $u$ exists.  Let $T>0$ be given.  Invoking \cite[Lemma $2.6$]{KVNLWradial} (see also Lemma $8.3$ of \cite{BulutCubic}), we obtain
\begin{align*}
\int_0^T \int_{\mathbb{R}^5} \frac{|u(t,x)|^4}{|x|}dxdt&\geq \sum_{i=0}^{\lfloor T\rfloor-1}\int_i^{i+1}\int_{|x-x(t)|\leq R} \frac{|u(t,x)|^4}{|x|}dxdt\\
&\geq \sum_{i=0}^{\lfloor T\rfloor-1}\frac{1}{C'+i}\int_i^{i+1}\int_{|x-x(t)|\leq R} |u(t,x)|^4dxdt\\
&\geq c\log\left(\frac{C'+\lfloor T\rfloor}{C'}\right).
\end{align*}
On the other hand, the Morawetz estimate
\begin{align*}
\int_0^T\int_{\mathbb{R}^5} \frac{|u(t,x)|^4}{|x|}dxdt\lesssim E(u_0,u_1)
\end{align*}
gives
\begin{align*}
&\log\left(\frac{C'+\lfloor T\rfloor}{C'}\right)\lesssim \lVert (u,u_t)\rVert_{L_t^\infty(\dot{H}_x^{1}\times L_x^2)}^2+\lVert u\rVert_{L_t^\infty L_x^4}^4\\
&\hspace{0.2in}\lesssim \lVert (u,u_t)\rVert_{L_t^\infty(\dot{H}_x^{1}\times L_x^2)}^2+\lVert u\rVert_{L_t^\infty L_x^5}^{5/8}\lVert u\rVert_{L_t^\infty L_x^3}^{3/8}\\
&\hspace{0.2in}\lesssim 1,
\end{align*}
where we have used Theorem $\ref{thm_negreg}$ and ($\ref{eqrem2}$) to see the finiteness of the right hand side.  Taking $T\rightarrow\infty$, we obtain a contradiction as desired.
\end{proof}

To conclude, it therefore suffices to rule out the low-to-high frequency cascade solution of Theorem $\ref{thm_apred}$.  For this, we first recall that for any almost periodic solution $u$ to (NLW), the compactness characterization ($\ref{cptness}$) of almost periodicity implies that there exists $c(\eta)>0$ such that
\begin{align}
\int_{|\xi|\leq c(\eta)N(t)} ||\xi|^{3/2}\hat{u}(t,\xi)|^2+||\xi|^{1/2}\hat{u}_t(t,\xi)|^2d\xi<\eta\label{ap_2}
\end{align}
(see for instance, Remark $3.4$ of \cite{BulutCubic} and $(8.1)$ in \cite{KVNLW3}).  We then have
\begin{proposition}[Low-to-high frequency cascade solution]
\label{prop_casc}
There is no solution $u:\mathbb{R}\times\mathbb{R}^5\rightarrow\mathbb{R}$ satisfying the conditions of a low-to-high frequency cascade solution as in Theorem $\ref{thm_apred}$.
\end{proposition}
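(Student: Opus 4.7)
The plan is to argue by contradiction. Suppose $u$ is a low-to-high frequency cascade solution, so $I=\mathbb{R}$, $\inf_{t\in\mathbb{R}}N(t)\geq 1$, and there exists $t_n\to\infty$ with $N(t_n)\to\infty$. Theorem $\ref{thm_negreg}$ supplies $(u,u_t)\in L_t^\infty(\dot H_x^1\times L_x^2)$, so the standard energy identity for (NLW) applies and the quantity $E_0:=E(u(t),u_t(t))$ is independent of $t$. I will prove $E_0=0$; by non-negativity of the energy density this forces $(u(0),u_t(0))=(0,0)$, hence $u\equiv 0$ by uniqueness of (NLW), contradicting the assumption $\lVert u\rVert_{L_{t,x}^6(\mathbb{R}\times\mathbb{R}^5)}=\infty$.

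To obtain $E_0=0$, it suffices to show $\lVert\nabla u(t_n)\rVert_{L_x^2}+\lVert u_t(t_n)\rVert_{L_x^2}\to 0$; the $L^4$-piece then follows by Sobolev embedding and interpolation, since
\begin{align*}
\lVert u(t_n)\rVert_{L_x^4}^4\lesssim \lVert u(t_n)\rVert_{\dot H_x^{5/4}}^4\lesssim \lVert u(t_n)\rVert_{\dot H_x^1}^2\lVert u(t_n)\rVert_{\dot H_x^{3/2}}^2\to 0
\end{align*}
using the uniform critical-norm bound. For the kinetic term I fix $\eta>0$ and decompose the Plancherel integral for $\lVert\nabla u(t_n)\rVert_{L_x^2}^2$ into three bands. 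On the high band $|\xi|>c(\eta)N(t_n)$, the inequality $|\xi|^2\leq (c(\eta)N(t_n))^{-1}|\xi|^3$ with the uniform $\dot H_x^{3/2}$-bound gives a contribution of order $(c(\eta)N(t_n))^{-1}$; on the middle band $M<|\xi|\leq c(\eta)N(t_n)$, the inequality $|\xi|^2\leq M^{-1}|\xi|^3$ combined with $(\ref{ap_2})$ gives a contribution of order $M^{-1}\eta$.

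The low band $|\xi|\leq M$ is the main technical point, and here the spatial-decay bound in Theorem $\ref{thm_negreg}$ is essential. H\"older's inequality with the weight $\langle x-x(t)\rangle^{-\beta}$ yields
\begin{align*}
\lVert P_{\leq N_1}\nabla u(t)\rVert_{L_x^p}\leq \lVert \langle x-x(t)\rangle^{-\beta}\rVert_{L_x^q}\,\lVert \langle x-x(t)\rangle^\beta P_{\leq N_1}\nabla u(t)\rVert_{L_x^2}\lesssim 1
\end{align*}
uniformly in $t$, for any $p\in(\tfrac{10}{2\beta+5},2)$ with $q=\tfrac{2p}{2-p}>\tfrac{5}{\beta}$ chosen so that $\langle\cdot\rangle^{-\beta}\in L_x^q(\mathbb{R}^5)$. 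Bernstein's inequality then gives
\begin{align*}
\lVert P_{\leq M}\nabla u(t)\rVert_{L_x^2}\lesssim M^{5(1/p-1/2)}\lVert P_{\leq N_1}\nabla u(t)\rVert_{L_x^p}\lesssim M^\alpha
\end{align*}
uniformly in $t$ and $M\leq N_1$, with $\alpha:=5(1/p-1/2)>0$. Summing the three band contributions and optimizing with $M=\eta^{1/(2\alpha+1)}$ yields
\begin{align*}
\lVert\nabla u(t_n)\rVert_{L_x^2}^2\lesssim \eta^{2\alpha/(2\alpha+1)}+(c(\eta)N(t_n))^{-1},
\end{align*}
which tends to $\eta^{2\alpha/(2\alpha+1)}$ as $n\to\infty$ and then to $0$ as $\eta\to 0$. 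The estimate $\lVert u_t(t_n)\rVert_{L_x^2}\to 0$ follows in the same way, replacing $|\xi|^3|\hat u|^2$ by $|\xi|\,|\hat u_t|^2$ in $(\ref{ap_2})$ and using the weighted bound on $P_{\leq N_1}u_t$ supplied by Theorem $\ref{thm_negreg}$.

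The main obstacle is precisely the very low band: neither the uniform $\dot H_x^{3/2}$ control nor $(\ref{ap_2})$ provides any decay near $\xi=0$, because the weight $|\xi|^2$ degenerates there. The spatial decay of the low-frequency part of $(\nabla u,u_t)$ provided by Theorem $\ref{thm_negreg}$ is exactly what converts, via the H\"older--Bernstein estimate above, into the $M^\alpha$-gain that makes the three-band optimization work; the rest is routine bookkeeping with Plancherel, $(\ref{ap_2})$, conservation of energy, and uniqueness.
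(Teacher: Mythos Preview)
Your proof is correct and follows essentially the same approach as the paper: a three-band frequency decomposition where the high band is controlled by the a priori $\dot H_x^{3/2}$ bound and $N(t_n)\to\infty$, the middle band by the almost-periodicity smallness $(\ref{ap_2})$, and the low band by the weighted $L_x^2$ bound of Theorem~$\ref{thm_negreg}$ via H\"older and Bernstein; the potential energy is handled identically by Sobolev embedding and interpolation. The only cosmetic difference is that you optimize $M$ in terms of $\eta$, whereas the paper sends $n\to\infty$, then $\eta\to 0$, then $M\to 0$ sequentially; your choice $p\in(\tfrac{10}{2\beta+5},2)$ recovers exactly the paper's exponent $M^{5\beta/6}$ when $p=\tfrac{6}{3+\beta}$, $q=\tfrac{6}{\beta}$.
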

\begin{proof}
We proceed as in \cite{KVNLW3}; see also Section $9$ of \cite{BulutCubic}.  Assuming for contradiction that such a $u$ existed, we choose a sequence $\{t_n\}$ such that $t_n\rightarrow\infty$ and $N(t_n)\rightarrow\infty$ as $n\rightarrow\infty$.  Fix $\eta>0$, choose $c(\eta)$ as in ($\ref{ap_2}$), and fix a dyadic number $M\in (0,\frac{1}{2})$.  Suppose that $n$ is large enough to ensure $M<c(\eta)N(t_n)$.  We may then write
\begin{align*}
u_{\leq c(\eta)N(t_n)}(t_n)=u_{\leq M}(t_n)+u_{M<\cdot\leq c(\eta)N(t_n)}(t_n),
\end{align*}
so that by applying the Bernstein inequalities, H\"older's inequality, and ($\ref{ap_2}$), we obtain
\begin{align}
\nonumber \lVert \nabla u_{\leq c(\eta)N(t_n)}(t_n)\rVert_{L_x^2}&\leq \lVert \nabla u_{\leq M}(t_n)\rVert_{L_x^2}+\lVert \nabla u_{M<\cdot \leq c(\eta)N(t_n)}\rVert_{L_x^2}\\
\nonumber &\lesssim M^{\frac{5\beta}{6}}\lVert \nabla u_{\leq M}(t_n)\rVert_{L_x^\frac{6}{3+\beta}}+M^{-1/2}\lVert |\nabla|^{3/2} u_{\leq c(\eta)N(t_n)}(t_n)\rVert_{L_x^2}\\
\nonumber &\lesssim M^{\frac{5\beta}{6}}\lVert \langle x-x(t)\rangle^\beta\nabla u_{\leq 1}\rVert_{L_t^\infty L_x^2}\lVert \langle x-x(t)\rangle^{-\beta}\rVert_{L_t^\infty L_x^{6/\beta}}+M^{-1/2}\eta\\
&\lesssim M^{\frac{5\beta}{6}}+M^{-1/2}\eta,\label{est-11}
\end{align}
where we have invoked Theorem $\ref{thm_negreg}$ and noted that
\begin{align*}
\lVert \langle x-x(t)\rangle^{-\beta}\rVert_{L_x^{6/\beta}}=\left(\int_{\mathbb{R}^5} \langle x-x(t)\rangle^{-6}dx\right)^{\beta/6}<\infty
\end{align*}
for any $t\in\mathbb{R}$ to obtain the last inequality.

On the other hand, the Bernstein inequalities may also be applied to the high frequency portion of $u(t_n)$, giving
\begin{align}
\nonumber \lVert \nabla u_{>c(\eta)N(t_n)}(t_n)\rVert_{L_x^2}&\lesssim [c(\eta)N(t_n)]^{-1/2}\lVert |\nabla|^{3/2}u_{>c(\eta)N(t_n)}(t_n)\rVert_{L_x^2}\\
\nonumber &\lesssim [c(\eta)N(t_n)]^{-1/2}\lVert |\nabla|^{3/2}u\rVert_{L_t^\infty L_x^2}\\
&\lesssim [c(\eta)N(t_n)]^{-1/2}\label{est-12}
\end{align}
where we have used the a priori bound $(u,u_t)\in L_t^\infty(\dot{H}_x^{3/2}\times\dot{H}_x^{1/2})$.  Combining ($\ref{est-11}$) and ($\ref{est-12}$) then gives the bound
\begin{align}
\lVert \nabla u(t_n)\rVert_{L_x^2}&\leq M^\frac{5\beta}{6}+M^{-1/2}\eta+[c(\eta)N(t_n)]^{-1/2}.\label{est-21}
\end{align}

Repeating these arguments for $u_t(t_n)$, we obtain
\begin{align*}
\lVert P_{\leq c(\eta)N(t_n)}u_t(t_n)\rVert_{L_x^2}&\lesssim M^{\frac{5\beta}{6}}+M^{-1/2}\eta
\end{align*}
and
\begin{align*}
\lVert P_{>c(\eta)N(t_n)}u_t(t_n)\rVert_{L_x^2}&\lesssim [c(\eta)N(t_n)]^{-1/2},
\end{align*}
and we therefore obtain
\begin{align}
\lVert u_t(t_n)\rVert_{L_x^2}&\leq M^\frac{5\beta}{6}+M^{-1/2}\eta+[c(\eta)N(t_n)]^{-1/2}.\label{est-22}
\end{align}

We now estimate the potential energy.  For this, we note that the Sobolev embedding followed by interpolation gives
\begin{align}
\nonumber \lVert u(t_n)\rVert_{L_x^4}&\leq \lVert |\nabla|^{5/4}u(t_n)\rVert_{L_x^2}\\
\nonumber &\lesssim \lVert \nabla u(t_n)\rVert_{L_x^2}^{1/2}\lVert |\nabla|^{3/2}u(t_n)\rVert_{L_x^2}^{1/2}\\
&\lesssim \left(M^\frac{5\beta}{6}+M^{-1/2}\eta+[c(\eta)N(t_n)]^{-1/2}\right)^{1/2},\label{est-23}
\end{align}
where we have again used the a priori bound $(u,u_t)\in L_t^\infty(\dot{H}_x^{3/2}\times\dot{H}_x^{1/2})$.

Combining ($\ref{est-21}$), ($\ref{est-22}$) and ($\ref{est-23}$), we obtain
\begin{align}
E(u_0,u_1)&=E(u(t_n),u_t(t_n))\lesssim \omega+\omega^{1/2},\label{est-31}
\end{align}
where
\begin{align*}
\omega=M^\frac{5\beta}{6}+M^{-1/2}\eta+[c(\eta)N(t_n)]^{-1/2}.
\end{align*}
Recall that ($\ref{est-31}$) holds for all $n$ sufficiently large (depending on $M$ and $\eta$).  Fixing $M$ and $\eta$, we now let $n\rightarrow\infty$ to obtain
\begin{align*}
E(u_0,u_1)&\lesssim M^\frac{5\beta}{6}+M^{-1/2}\eta+\left(M^\frac{5\beta}{6}+M^{-1/2}\eta\right)^{1/2}.
\end{align*}
We now let $\eta\rightarrow 0$ followed by $M\rightarrow 0$ to conclude $E(u_0,u_1)=0$ and thus $u\equiv 0$, contradicting $\lVert u\rVert_{L_{t,x}^6}=\infty$.
\end{proof}

\appendix
\section{Proof of Lemma $\ref{thm_weakdiff}$.}

In this appendix, we prove the weak diffraction result, Lemma $\ref{thm_weakdiff}$, which is used in the proof of Proposition $\ref{lem55}$.

\begin{proof}
We argue as in Proposition $2.6$ of \cite{KVNLW3}.  To prove the lemma, we decompose the integral $I(F,G,R)$ into the sum of five terms, each of which will be estimated individually.  Toward this end, we use the Plancherel theorem and the identity $\widehat{\phi(\frac{\cdot-x_0}{R})}(\xi)=R^5e^{-ix_0\cdot \xi}\hat{\phi}(R\xi)$ to write
\begin{align*}
&\int \langle \nabla \frac{\sin(t|\nabla|)}{|\nabla|}\theta(i\nabla)F(t), \,\,  \phi(\frac{\cdot-x_0}{R})\nabla\frac{\sin(\tau|\nabla|)}{|\nabla|}\theta(i\nabla)G(\tau)\rangle d\tau dt\\
&\hspace{0.2in}=\int \langle (i\xi)\frac{\sin(t|\xi|)}{|\xi|}\theta(\xi)\hat{F}(t,\xi), \,\, \widehat{\phi\left(\frac{\cdot-x_0}{R}\right)}*(i\,\cdot\,)\frac{\sin(\tau |\cdot|)}{|\cdot|}\theta(\cdot)\hat{G}(\tau)\rangle d\tau dt\\
&\hspace{0.2in}=\int R^5\sin(t|\xi|)\sin(\tau|\eta|)\theta(\xi)\theta(\eta)\frac{\xi\cdot \eta}{|\xi||\eta|}\\
&\hspace{1.6in}\,\cdot\,\overline{e^{-ix_0\cdot R(\xi-\eta)}\widehat{\phi}(R(\xi-\eta))}\hat{F}(t,\xi)\overline{\hat{G}(t,\eta)}d\eta d\xi d\tau dt.
\end{align*}
Summing this with a corresponding identity for the operator $\cos(t|\nabla|)$ and using the changes of variables $\xi\mapsto 2\xi+\eta$, $\eta\mapsto \eta-\xi$, we find
\begin{align*}
I(F,G,R)&=\int R^5\bigg(\cos(t|\xi|-\tau |\eta|)-\sin(t|\xi|)\sin(\tau|\eta|)(1-\frac{\xi\cdot \eta}{|\xi||\eta|})\bigg)\\
&\hspace{0.8in}\cdot\theta(\xi)\theta(\eta)\overline{e^{-ix_0\cdot R(\xi-\eta)}\widehat{\phi}(R(\xi-\eta))}\hat{F}(t,\xi)\overline{\hat{G}(\tau,\eta)}d\eta d\xi d\tau dt\\
&=\int \frac{R^5}{2}\bigg(\textrm{Re}[e^{i(t|\xi|-\tau|\eta|)}](1+\frac{\xi\cdot \eta}{|\xi||\eta|})+\textrm{Re}[e^{i(t|\xi|+\tau |\eta|)}](1-\frac{\xi\cdot\eta}{|\xi||\eta|})\bigg)\\
&\hspace{0.8in}\cdot\theta(\xi)\theta(\eta)\overline{e^{-ix_0\cdot R(\xi-\eta)}\widehat{\phi}(R(\xi-\eta))}\hat{F}(t,\xi)\overline{\hat{G}(\tau,\eta)}d\eta d\xi d\tau dt\\
&=(I)+(II)+(III)+(IV)+(V),
\end{align*}
where we have set
\begin{align*}
(I)&=\int 2^4R^5A(t,\tau,\mu,\nu)\phi(R^{11/25}\mu)\hat{F}(t,\mu+\nu)\overline{\hat{G}(\tau,\mu-\nu)}d\mu d\nu d\tau dt,\\
(II)&=\int 2^4R^5A_1(t,\tau,\mu,\nu)(1-\phi(R^{11/25}\mu))\phi(R^{12/25}\nu)\hat{F}(t,\mu+\nu)\overline{\hat{G}(\tau,\mu-\nu)}d\mu d\nu d\tau dt,\\
(III)&=\int_{\{(t,\tau,\mu,\nu):|t+\tau|\leq R^{25/26}\}} 2^4R^5A_2(t,\tau,\mu,\nu)(1-\phi(R^{11/25}\mu))\phi(R^{12/25}\nu)\\
&\hspace{1in}\hat{F}(t,\mu+\nu)\overline{\hat{G}(\tau,\mu-\nu)}d\mu d\nu d\tau dt,\\
(IV)&=\int_{\{(t,\tau,\mu,\nu):|t+\tau|>R^{25/26}\}} 2^4R^5A_2(t,\tau,\mu,\nu)(1-\phi(R^{11/25}\mu))\phi(R^{12/25}\nu)\\
&\hspace{1in}\hat{F}(t,\mu+\nu)\overline{\hat{G}(\tau,\mu-\nu)}d\mu d\nu d\tau dt,\\
(V)&=\int 2^4R^5A(t,\tau,\mu,\nu)(1-\phi(R^{11/25}\mu))(1-\phi(R^{12/25}\nu))\\
&\hspace{1in}\hat{F}(t,\mu+\nu)\overline{\hat{G}(\tau,\mu-\nu)}d\mu d\nu d\tau dt,
\end{align*}
and
\begin{align*}
A(t,\tau,\mu,\nu)&:=A_1(t,\tau,\mu,\nu)+A_2(t,\tau,\mu,\nu),\\
A_1(t,\tau,\mu,\nu)&:=\textrm{Re}[e^{i(t|\mu+\nu|-\tau|\mu-\nu|)}](1+\frac{(\mu+\nu)\cdot (\mu-\nu)}{|\mu+\nu||\mu-\nu|})\\
&\hspace{0.4in}\theta(\mu+\nu)\theta(\mu-\nu)\overline{e^{-ix_0\cdot 2R\nu}\widehat{\phi}(R(2\nu))},\\
A_2(t,\tau,\mu,\nu)&:=\textrm{Re}[e^{i(t|\mu+\nu|+\tau |\mu-\nu|)}](1-\frac{(\mu+\nu)\cdot(\mu-\nu)}{|\mu+\nu||\mu-\nu|})\\
&\hspace{0.4in}\theta(\mu+\nu)\theta(\mu-\nu)\overline{e^{-ix_0\cdot 2R\nu}\widehat{\phi}(R(2\nu))}.
\end{align*}

It now remains to estimate the terms $(I)$-$(V)$.  For notational convenience, we set \begin{align*}
X_1&:=\{t:\supp F\cap (\{t\}\times \mathbb{R}^5)\neq \emptyset\}\times \{\tau:\supp G\cap (\{\tau\}\times \mathbb{R}^5)\neq \emptyset\},
\end{align*}
\begin{align*}
X&:=X_1\times \mathbb{R}^5\times\mathbb{R}^5,
\end{align*}
and note that the hypotheses of the lemma imply $|X_1|\leq (2C_1R)^2$.

To estimate $(I)$, we use the change of variables $\mu\mapsto R^{-11/25}\mu$ and $\nu\mapsto R^{-1}\nu$ to obtain
\begin{align*}
|(I)|&\leq \int_X R^5 \lVert F\rVert_{L_t^\infty L_x^1}\lVert G\rVert_{L_t^\infty L_x^1}\, |A(t,\tau,\mu,\nu)|\,|\phi(R^{11/25}\mu)| d\mu d\nu d\tau dt\\
&\leq \int_X R^5 \lVert F\rVert_{L_t^\infty L_x^1}\lVert G\rVert_{L_t^\infty L_x^1}\, |\hat{\phi}(2R\nu)|\,|\phi(R^{11/25}\mu)| d\mu d\nu d\tau dt\\
&=\int_X R^{-11/5}|\hat{\phi}(\nu)|\phi(\mu)\lVert F\rVert_{L_t^\infty L_x^1}\lVert G\rVert_{L_t^\infty L_x^1}d\mu d\nu d\tau dt\\
&\lesssim R^{-11/5}|X_1|\,\lVert F\rVert_{L_t^\infty L_x^1}\lVert G\rVert_{L_t^\infty L_x^1}\\
&\lesssim R^{-1/5}\lVert F\rVert_{L_t^\infty L_x^1}\lVert G\rVert_{L_t^\infty L_x^1},
\end{align*}
where to obtain the fourth line we have used $\phi\in C^\infty_c$ and the bound 
\begin{align}
\label{eqaab2}\int_{\mathbb{R}^5} |\hat{\phi}(\nu)|d\nu&\leq C\bigg(\int_{|\nu|\leq 1} \frac{1}{|\nu|^4}d\nu+\int_{|\nu|>1} \frac{1}{|\nu|^6}d\nu\bigg)<\infty,
\end{align}
which follows from the observation that $\phi\in C^\infty_c$ allows us to choose constants $C_m>0$ such that 
\begin{align}
|\hat{\phi}(\xi)|\leq C_m|\xi|^{-m}\label{eq2}
\end{align}
for every $m\geq 1$.

To estimate $(II)$, we write $A_1(t,\tau,\mu,\nu)$ as the sum of two terms and estimate the resulting oscillatory integrals.  More precisely, we obtain 
\begin{align}
\nonumber |(II)|&=\bigg|\int F(t,x)G(\tau,y)\bigg(\int 2^4R^5e^{ix\cdot (\mu+\nu)-iy\cdot (\mu-\nu)} A_1(t,\tau,\mu,\nu)\\
\nonumber &\hspace{0.4in}\cdot(1-\phi(R^{11/25}\mu))\phi(R^{12/25}\nu)d\mu d\nu \bigg)dxdyd\tau dt\bigg|\\
\nonumber &\lesssim \bigg|\int F(t,x)G(\tau,y)\bigg(\int e^{iR\varphi_{1}(\mu,\nu)}\psi(\mu,\nu)d\mu d\nu\bigg) dxdy d\tau dt\bigg|\\
\nonumber &\hspace{0.2in}+\bigg|\int F(t,x)G(\tau,y)\bigg(\int e^{iR\varphi_{-1}(\mu,\nu)}\psi(\mu,\nu)d\mu d\nu\bigg) dxdy d\tau dt\bigg|\\
\nonumber &\lesssim \int_{\mathbb{R}^5\times\mathbb{R}^5\times X_1} |F(t,x)||G(\tau,y)|R^{-74/25}dxdydtd\tau\\
\nonumber &\lesssim R^{-74/25}|X_1|\, \lVert F\rVert_{L_t^\infty L_x^1}\lVert G\rVert_{L_t^\infty L_x^1}\\
&\lesssim R^{-24/25}\lVert F\rVert_{L_t^\infty L_x^1}\lVert G\rVert_{L_t^\infty L_x^1},
\end{align}
where we have used the oscillatory integral estimate
\begin{align}
\label{rltd}\left|\int e^{iR\varphi_\sigma(\mu,\nu)}\psi(\mu,\nu)d\mu d\nu\right| &\lesssim R^{-74/25},
\end{align}
for $\sigma=\pm 1$, with
\begin{align*}
\varphi_\sigma&=\frac{1}{R}[\sigma(t|\mu+\nu|-\tau |\mu-\nu|)+\mu\cdot (x-y)+\nu\cdot (x+y)],\\
\psi&=(1-\phi(R^{11/25}\mu))\phi(R^{12/25}\nu)R^5\\
&\hspace{1.2in}\overline{e^{-ix_0\cdot 2R\nu}\hat{\phi}(2R\nu)}\theta(\mu+\nu)\theta(\mu-\nu)\bigg(1+\frac{(\mu+\nu)\cdot (\mu-\nu)}{|\mu+\nu||\mu-\nu|}\bigg).
\end{align*}

To establish ($\ref{rltd}$), let $(\mu,\nu,\tau,t)$ be a given point in the support of the integrand.  We first show that $\left|\frac{\mu}{|\mu|}\cdot \nabla_\mu \varphi\right|$ is bounded away from zero.  Indeed, using Cauchy-Schwarz followed by the inequality $\sqrt{1-\frac{|\nu|^2}{|\mu+\nu|^2}}\leq \frac{\mu\cdot (\mu\pm \nu)}{|\mu||\mu\pm\nu}$ (which follows from $|\mu|\geq R^{-11/25}$ and $|\nu|\leq 2R^{-12/25}$), we obtain
\begin{align*}
\bigg| \frac{\mu}{|\mu|}\cdot \nabla_{\mu} \varphi\bigg|&=\bigg|\frac{t-\tau}{R}-\frac{t-\tau}{R}+\frac{\mu}{|\mu|}\cdot \bigg[\frac{\sigma t(\mu+\nu)}{R|\mu+\nu|}-\frac{\sigma \tau(\mu-\nu)}{R|\mu-\nu|}+\frac{(x-y)}{R}\bigg]\bigg|\\
&\geq C_2-C_1\max\bigg\{1-\frac{\mu\cdot (\mu+\nu)}{|\mu||\mu+\nu|},1-\frac{\mu\cdot (\mu-\nu)}{|\mu||\mu-\nu|}\bigg\}\\
&\geq C_2-C_1\bigg(1-\sqrt{1-\frac{|\nu|^2}{|\mu+\nu|^2}}\bigg)\\
&\geq C_2-\frac{C_1|\nu|^2}{|\mu+\nu|^2}\\
&\geq C_2-\frac{2C_1R^{-24/25}}{(R^{-11/25}-2R^{-12/25})^2}\\
&\gtrsim 1,
\end{align*}
provided $R$ is chosen sufficiently large.

We now set $a=\left(\frac{\mu}{|\mu|}\cdot \nabla_\mu \varphi\right)^{-1}\frac{\mu}{|\mu|}$.  Observing the identity $e^{iR\varphi}=R^{-6}(a\cdot i\nabla_\mu)^6e^{iR\varphi}$ and integrating by parts, we obtain
\begin{align}
\nonumber \left|\int e^{iR\varphi_\sigma(\mu,\nu)}\psi(\mu,\nu)d\mu d\nu\right|&\lesssim R^{-12/5}\sup_{|\nu|\leq 2R^{-12/25}}\bigg|\int e^{iR\varphi}\psi d\mu\bigg|\\
\nonumber &\lesssim R^{-42/5}\sup_{|\nu|\leq 2R^{-12/25}}\int |(\nabla_\mu\cdot a)^6\psi|d\mu\\
\nonumber &\lesssim R^{-42/5}\int_{|\mu|\geq R^{-11/25}} R^5|\mu|^{-6}d\mu\\
&\lesssim R^{-74/25}
\end{align}
which establishes ($\ref{rltd}$) as desired, where we have used the estimate 
\begin{align*}
\sup_{\nu} |(i\nabla_\mu\cdot a)^6\psi|\leq CR^5|\mu|^{-6}.
\end{align*}

We now turn to the estimate of $(III)$.  To estimate this term, we begin by observing that for $R$ sufficiently large, the conditions $|\mu|>R^{-11/25}$ and $|\nu|<2R^{-12/25}$ give $|\mu|>2|\nu|$.  We therefore obtain  
\begin{align*}
|(III)|&\lesssim R^5\int_{\mathbb{R}^5\times\mathbb{R}^5\times S} \frac{|\nu|^2}{|\mu|^2}\theta(\mu+\nu)|\hat{\phi}(2R\nu)|\lVert F\rVert_{L_t^\infty L_x^1}\lVert G\rVert_{L_t^\infty L_x^1}d\mu d\nu d\tau dt\\
&\lesssim R^5\lVert F\rVert_{L_t^\infty L_x^1}\lVert G\rVert_{L_t^\infty L_x^1}\int_{\mathbb{R}^5\times S}  |\nu|^2|\hat{\phi}(2R\nu)|d\nu d\tau dt\\
&\lesssim R^{-2}\lVert F\rVert_{L_t^\infty L_x^1}\lVert G\rVert_{L_t^\infty L_x^1}|S|\\
&\lesssim R^{-1/26}\lVert F\rVert_{L_t^\infty L_x^1}\lVert G\rVert_{L_t^\infty L_x^1}
\end{align*}
with
\begin{align*}
S:=(X_1\cap \{(t,\tau):|t+\tau|\leq R^{25/26}\}),
\end{align*}
where for the first inequality we have used the bound
\begin{align*}
1-\frac{(\mu+\nu)\cdot (\mu-\nu)}{|\mu+\nu||\mu-\nu|}&\leq \frac{C|\nu|^2}{|\mu|^2}
\end{align*}
on the support of $(1-\phi(R^{11/25}\mu))\phi(R^{12/25}\nu)$,
while for the second and third inequalities we have used the bounds
\begin{align*}
\int_{\mathbb{R}^5} \frac{\theta(\mu+\nu)}{|\mu|^2}d\mu&=\int_{|\mu|<1} |\mu|^{-2}d\mu+\int_{|\mu|\geq 1} \theta(\mu+\nu)d\mu\leq C+\lVert \theta\rVert_{L^1}<\infty
\end{align*}
and
\begin{align*}
\int_{\mathbb{R}^5} |\nu|^2|\hat{\phi}(2R\nu)|d\nu&=R^{-7}\int_{\mathbb{R}^5} |\nu|^2|\hat{\phi}(2\nu)|d\nu\\
&\leq R^{-7}(\lVert \phi\rVert_{L^1}\int_{|\nu|<1} |\nu|^2d\nu+C\int_{|\nu|\geq 1} |2\nu|^{-6}d\nu)\\
&\leq CR^{-7},
\end{align*}
respectively.

To estimate $(IV)$, we proceed similarly to our estimate of $(II)$ above, writing $A_2(t,\tau,\mu,\nu)$ as the sum of two terms and estimating the resulting oscillatory integrals.  More precisely, we obtain 
\begin{align*}
|(IV)|&\lesssim \bigg|\int F(t,x)G(\tau,y)\bigg(\int e^{i\widetilde{\varphi_{1}}(\mu,\nu)}\widetilde{\psi}(\mu,\nu) \\
&\hspace{0.4in}\cdot R^5\overline{e^{-ix_0\cdot 2R\nu}\hat{\phi}(2R\nu)}\phi(R^{12/25}\nu)d\mu d\nu\bigg) dxdy d\tau dt\bigg|\\
&\hspace{0.2in}+\bigg|\int F(t,x)G(\tau,y)\bigg(\int e^{i\widetilde{\varphi_{-1}}(\mu,\nu)}\widetilde{\psi}(\mu,\nu)\\
&\hspace{0.4in}\cdot R^5\overline{e^{-ix_0\cdot 2R\nu}\hat{\phi}(2R\nu)}\phi(R^{12/25}\nu)d\mu d\nu\bigg) dxdy d\tau dt\bigg|\\
&\lesssim \int_{\mathbb{R}^5\times \mathbb{R}^5\times X_1} |F(t,x)||G(\tau,y)|R^{-129/52}dxdydtd\tau\\
&\lesssim \lVert F\rVert_{L_t^\infty L_x^1}\lVert G\rVert_{L_t^\infty L_x^1}R^{-129/52}|X_1|\\
&\lesssim \lVert F\rVert_{L_t^\infty L_x^1}\lVert G\rVert_{L_t^\infty L_x^1}R^{-25/52}.
\end{align*}
where we have used the oscillatory integral estimate
\begin{align}
\label{le2e}
\int \bigg(\int e^{i\widetilde{\varphi_\sigma}}\widetilde{\psi}d\mu\bigg) R^5\overline{e^{-ix_0\cdot 2R\nu}\hat{\phi}(2R\nu)}\phi(R^{12/25}\nu)d\nu\lesssim R^{-129/52}
\end{align}
for $\sigma\in \{-1,1\}$ and
\begin{align*}
\widetilde{\varphi_\sigma}&=\sigma(t|\mu+\nu|+\tau|\mu-\nu|)+\mu(x-y)+\nu(x+y),\\
\widetilde{\psi}&=\theta(\mu+\nu)\theta(\mu-\nu)\bigg(1-\frac{(\mu+\nu)\cdot (\mu-\nu)}{|\mu+\nu||\mu-\nu|}\bigg)(1-\phi(R^{11/25}\mu)).
\end{align*}

To obtain $(\ref{le2e})$, we write the left hand side of ($\ref{le2e}$) as
\begin{align*}
\sum_{j=1}^{10} \int \bigg(\int e^{i\widetilde{\varphi}_\sigma}\widetilde{\psi}\eta_j(\mu)d\mu\bigg)R^5\overline{e^{-ix_0\cdot 2R\nu}\hat{\phi}(2R\nu)}\phi(R^{12/25}\nu)d\nu.
\end{align*}
where the functions $\eta_j$ form a partition of unity subordinate to the open cover
\begin{align*}
\{\max \{|\mu_i|:i\neq k\}<\frac{25}{12}\mu_k\},\quad \{\max \{|\mu_i|:i\neq k\}<-\frac{25}{12}\mu_k\}
\end{align*}
for $k=1,\cdots 5$.

We consider the $j=1$ term and note that by symmetry one may estimate the other terms in a similar manner.  Invoking the Van der Corput lemma, we bound the modulus of the left hand side of ($\ref{le2e}$) by 
\begin{align*}
&\hspace{0.2in}\int R^5|\hat{\phi}(2R\nu)|\,|\phi(R^{12/25}\nu)|\, \left|\int \left(\int e^{i\widetilde{\varphi}_\sigma}\widetilde{\psi}\eta_1d\mu_1\right)d\mu_2\cdots d\mu_5\right|d\nu\\
&\hspace{0.2in}\lesssim \int_{\mathbb{R}^5}\int_{Y\times [0,\infty)} \left(\frac{\mu_5}{|t+\tau|}\right)^{1/2} R^5|\hat{\phi}(2R\nu)|\,|\phi(R^{12/25}\nu)| \\
&\hspace{1in}\cdot \left(\lVert \widetilde{\psi}\rVert_{L_{\mu_1}^\infty}+\lVert \partial_{\mu_1}\widetilde{\psi}\rVert_{L_{\mu_1}^1}\right)d\mu_2\cdots d\mu_5d\nu\\
&\hspace{0.2in}\lesssim \int_{\mathbb{R}^5}\int_{Y\times [0,\infty)} \left(\frac{|\nu|^2}{|t+\tau|^{1/2}}\right)R^5|\hat{\phi}(2R\nu)|\,|\phi(R^{12/25}\nu)|\\
&\hspace{1in}\left(\frac{1}{\langle \mu_5\rangle^8}(2\mu_5^{3/2}+\mu_5^{5/2})\right)d\mu_2\cdots d\mu_5d\nu\\
&\hspace{0.2in}\lesssim R^{-129/52},
\end{align*}
with $Y=\{(\mu_2,\mu_3,\mu_4):|\mu_i|\leq \mu_5\}$, where to obtain the second inequality we have observed that for every $(\mu,\nu)$ in the support of the integrand and $(t,\tau)\in X_1$, we have the inequalities $|\mu|^2-|\mu_1|^2\sim |\mu|^2$ and $|\mu|\sim\mu_5$, as well as
\begin{align*}
\bigg|\partial_{\mu_1}^2\widetilde{\varphi}_\sigma-\sigma\cdot (t+\tau)\frac{|\mu|^2-\mu_1^2}{|\mu|^3}\bigg|\leq \frac{CR^{1-1/25}}{|\mu|}.
\end{align*}
and for $R$ sufficiently large,
\begin{align*}
|\partial_{\mu_1}^2\widetilde{\varphi}_\sigma|&\geq |(t+\tau)\frac{|\mu|^2-\mu_1^2}{|\mu|^3}|-\bigg|\partial_{\mu_1}^2\widetilde{\varphi}_\sigma-\sigma\cdot (t+\tau)\frac{|\mu|^2-\mu_1^2}{|\mu|^3}\bigg|\\
&\geq |(t+\tau)\frac{|\mu|^2-\mu_1^2}{|\mu|^3}|-\frac{CR^{1-1/25}}{|\mu|}\\
&\geq \frac{|t+\tau|}{\mu_5}-\frac{CR^{-1/650}|t+\tau|}{\mu_5}\\
&\geq \frac{|t+\tau|}{2\mu_5},
\end{align*}
and to obtain the last inequality we have recalled that $\frac{1}{|t+\tau|^{1/2}}\leq \frac{1}{R^{25/52}}$ on the domain of integration.

It remains to estimate $(V)$, for which we proceed as in $(I)$.  In particular, we observe that by ($\ref{eq2}$), we have the inequality $|\hat{\phi}(2R\nu)|\leq C|2R\nu|^{-9}$, from which we obtain the bound
\begin{align*}
|(V)|&\lesssim \int_X R^5|A(t,\tau,\mu,\nu)|(1-\phi(R^{11/25}\mu))(1-\phi(R^{12/25}\nu))\\
&\hspace{0.4in}\lVert F\rVert_{L_t^\infty L_x^1}\lVert G\rVert_{L_t^\infty L_x^1}d\mu d\nu d\tau dt\\
&\lesssim \int_X R^5\theta(\mu+\nu)|\hat{\phi}(2R\nu)|(1-\phi(R^{12/25}\nu))\lVert F\rVert_{L_t^\infty L_x^1}\lVert G\rVert_{L_t^\infty L_x^1}d\mu d\nu d\tau dt\\
&\lesssim \lVert \theta\rVert_{L^1}\lVert F\rVert_{L_t^\infty L_x^1}\lVert G\rVert_{L_t^\infty L_x^1}\int_{X\cap \{\nu>R^{-12/25}\}} R^{-4}|\nu|^{-9}d\nu d\tau dt\\
&\lesssim \lVert \theta\rVert_{L^1}\lVert F\rVert_{L_t^\infty L_x^1}\lVert G\rVert_{L_t^\infty L_x^1}R^{-52/25}|X_1|\\
&\lesssim \lVert F\rVert_{L_t^\infty L_x^1}\lVert G\rVert_{L_t^\infty L_x^1}R^{-2/25}.
\end{align*}
\end{proof}

\subsection*{Acknowledgements}

The author would like to thank M. Visan for bringing this problem to our attention and for comments on an earlier version of this manuscript, as well as W. Beckner and N. Pavlovi\'c for useful conversations.  This material is based upon work supported by the National Science Foundation under agreement Nos. DMS-0635607 and DMS-0808042.  Any opinions, finding and conclusions or recommendations expressed in this material are those of the author and do not necessarily reflect the views of the National Science Foundation.

\end{document}